\newtheorem{theorem}{Theorem}
\newtheorem{definition}{Definition}
\newtheorem{lemma}{Lemma}
\newcommand{\sign}{{\rm sign}\hskip0.02cm}
\newcommand{\supp}{{\rm supp}\hskip0.02cm}
\newcommand{\vol}{{\rm vol}\hskip0.02cm}
\newcommand{\lin}{{\rm lin}\hskip0.02cm}
\newcommand{\ep}{\varepsilon}
\newcommand{\mt}{{\cal T}}
\newcommand{\ma}{\mathfrak{a}}
\newcommand{\mn}{\mathfrak{N}}
\newcommand{\cm}{{\check\mu}}
\newcommand{\co}{{\check\Omega}}
\newenvironment{remark}[1]{\medskip\par\noindent{\bf #1.}\rm}
{\medskip\par\noindent}
\newenvironment{proof}[1]{\medskip\par\noindent{\sc #1.\ }}
{~\rule{0.5em}{0.5em}\medskip\par}
\begin{document}

\title{\LARGE{\bf{
Sufficient enlargements of minimal volume for finite dimensional
normed linear spaces}}}

\author{{\sc M.I.~Ostrovskii}\\
\\
Department of Mathematics and Computer Science\\
St. John's University\\
8000 Utopia Parkway\\
Queens, NY 11439, USA\\
e-mail: {\tt ostrovsm@stjohns.edu}\\
Phone: (718)-990-2469\\
Fax: (718)-990-1650}

\date{\today}
\maketitle

\begin{large}

\noindent{\bf Abstract.} Let $B_Y$ denote the unit ball of a
normed linear space $Y$. A symmetric, bounded, closed, convex set
$A$ in a finite dimensional normed linear space $X$ is called a
{\it sufficient enlargement} for $X$ if, for an arbitrary
isometric embedding of $X$ into a Banach space $Y$, there exists a
linear projection $P:Y\to X$ such that $P(B_Y)\subset A$.  The
main results of the paper: {\bf (1)} Each minimal-volume
sufficient enlargement is linearly equivalent to a zonotope
spanned by multiples of columns of a totally unimodular matrix.
{\bf (2)} If a finite dimensional normed linear space has a
minimal-volume sufficient enlargement which is not a
parallelepiped, then it contains a two-dimensional subspace whose
unit ball is linearly equivalent to a regular hexagon. \vskip1cm

\noindent{\bf Keywords.} Banach space, space tiling zonotope,
sufficient enlargement for a normed linear space, totally
unimodular matrix

\section{Introduction}

This paper is devoted to a generalization of the main results of
\cite{Ost04}, where similar results were proved in the dimension
two. We refer to \cite{Ost04,Ost07+} for more background and
motivation.

\subsection{Notation and definitions}

All linear spaces considered in this paper will be over
the reals.
By a {\it space} we mean a normed linear space,
unless it is explicitly mentioned otherwise.
We denote by $B_X$ ($S_X$)
the unit ball (sphere) of a space $X$. We say that
subsets $A$ and $B$ of finite dimensional linear spaces $X$ and $Y$,
respectively, are {\it linearly equivalent} if there exists a
linear isomorphism $T$ between the subspace spanned by $A$ in
$X$ and the subspace spanned by $B$ in $Y$ such that $T(A)=B$.
By a {\it symmetric} set $K$ in a linear space we mean a
set such that $x\in K$ implies $-x\in K$.
\medskip

Our terminology and notation of Banach space theory follows
\cite{JL01}. By $B_p^n$, $1\le p\le\infty$, $n\in \mathbb{N}$ we
denote the closed unit ball of $\ell_p^n$. Our terminology and
notation of convex geometry follows \cite{Sch93}.
\medskip

We use the term {\it ball}~ for a
symmetric, bounded, closed, convex set with interior points
in a finite dimensional linear space.

\begin{definition} {\rm\cite{extracta} A ball $A$ in a finite dimensional normed
space $X$ is called a {\it sufficient enlargement} (SE) for $X$
(or of $B_X$) if, for an arbitrary isometric embedding of $X$ into
a Banach space $Y$, there exists a projection $P:Y\to X$ such that
$P(B_Y)\subset A$. A sufficient enlargement $A$ for $X$ is called
a {\it minimal-volume sufficient enlargement} (MVSE) if $\vol
A\le\vol D$ for each SE $D$ for $X$.}
\end{definition}

It can be proved, using a standard compactness
argument and Lemma \ref{L:H} below,
that minimal-volume sufficient enlargements exist for every
finite dimensional space.
\medskip

Recall that a real matrix $A$ with entries $-1$, $0$, and $1$ is
called {\it totally unimodular} if all minors (that is,
determinants of square submatrices) of $A$ are equal to $-1, 0$,
or $1$. See \cite{padberg} and \cite[Chapters~19--21]{Sch86} for a
survey of results on totally unimodular matrices and their
applications.
\medskip

A Minkowski sum of finitely many line segments in a linear space
is called a {\it zonotope} (see
\cite{bolker,martini,McM71,Sch93,schneiderweil} for basic facts on
zonotopes). We consider zonotopes that are sums of line segments
of the form $I(x)=\{\lambda x:~ -1\le\lambda\le 1\}$. For a
$d\times m$ totally unimodular matrix with columns $\tau_i$
$(i=1,\dots,m)$ and real numbers $a_i$ we consider the zonotope
$Z$ in $\mathbb{R}^d$ given by
$$Z=\sum_{i=1}^mI(a_i\tau_i).$$
The set of all zonotopes that are linearly equivalent to zonotopes
obtained in this way over all possible choices of $m$, of a rank
$d$ totally unimodular $d\times m$ matrix, and of positive numbers
$a_i~ (i=1,\dots,m)$ will be denoted by ${\cal T}_d$. Observe that
each element of ${\cal T}_d$ is $d$-dimensional in the sense that
it spans a $d$-dimensional subspace. It is easy to describe all
$2\times m$ totally unimodular matrices and to show that ${\cal
T}_2$ is the union of the set of all symmetric hexagons and the
set of all symmetric parallelograms.
\medskip

The class $\mt_d$ of zonotopes has been characterized in several
different ways, see \cite{Cox,Erd99,J,McM75,laa,She}. We shall use
a characterization of $\mt_d$ in terms of lattice tiles. Recall
that a compact set $K\subset \mathbb{R}^d$ is called a {\it
lattice tile} if there exists a basis $\{x_i\}_{i=1}^d$ in
$\mathbb{R}^d$ such that
$$\mathbb{R}^d=\bigcup_{m_1,\dots,m_d\in\mathbb{Z}}\left(\left(
\sum_{i=1}^dm_ix_i\right)+K\right),$$ and the interiors of the
sets $(\sum_{i=1}^dm_ix_i)+K$ are disjoint. The set
$$\Lambda=\left\{\sum_{i=1}^dm_ix_i:~m_1,\dots,m_d\in\mathbb{Z}\right\}$$ is
called a {\it lattice}. The absolute value of the determinant of
the matrix whose columns are the coordinates of $\{x_i\}_{i=1}^d$
is called the {\it determinant} of $\Lambda$ and is denoted
$d(\Lambda)$, see \cite[\S~3]{GL87}.
\medskip

\begin{theorem}\label{T:tiles} {\rm\cite{McM75}, \cite{Erd99}} A $d$-dimensional zonotope is a
lattice tile if and only if it is in $\mt_d$.
\end{theorem}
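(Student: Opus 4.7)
The plan is to prove both directions by relating lattice tiling of a zonotope to total unimodularity of its generator matrix, via the structure theory of parallelohedra.

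For the ``if'' direction, assume $Z=\sum_{i=1}^m I(a_i\tau_i)$ with $\tau_i$ the columns of a rank-$d$ totally unimodular matrix. Since the lattice-tile property is invariant under linear isomorphism, I may work with $Z$ in this exact form. I would construct an explicit tiling lattice $\Lambda$ with $d(\Lambda)=\vol(Z)$ and verify interior-disjointness of the translates $Z+\Lambda$. The volume is given by the classical zonotope formula
\[
\vol(Z)=2^d\sum_{\substack{S\subset\{1,\dots,m\}\\|S|=d}}|\det V_S|\prod_{i\in S}a_i,
\]
where $V_S$ is the submatrix with columns indexed by $S$; total unimodularity forces $|\det V_S|\in\{0,1\}$. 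For the lattice $\Lambda$ itself, I would use that facets of $Z$ come in centrally symmetric parallel pairs, and that the center-to-center translations across adjacent facet-pairs generate $\Lambda$. Interior-disjointness then reduces to showing that no nontrivial $\{-1,0,1\}$-combination of the scaled generators $a_i\tau_i$ can equal a lattice vector lying strictly inside $Z$, which follows from the unimodular structure.

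For the ``only if'' direction, assume $Z=\sum I(v_i)$ is a lattice tile. By the Venkov--McMullen structure theorem for parallelohedra, $Z$ must be a centrally symmetric polytope with centrally symmetric facets whose every ``belt'' (cycle of parallel facets around a $(d-2)$-face) has length $4$ or $6$. Translated to the zonotope setting, belts correspond to equivalence classes of parallel generators, and the belt-length restriction says that every minimal linearly dependent subset of $\{v_i\}$ consists either of two parallel vectors or of three coplanar vectors in ``hexagonal'' position (summing to zero up to signs). This is precisely the defining property of a regular matroid. Tutte's theorem on regular matroids then produces a linear change of coordinates in which $\{v_i\}$ appears, up to positive scalings, as the columns of a totally unimodular matrix, placing $Z$ in $\mt_d$.

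The main obstacle lies in the ``only if'' direction, which draws on substantial structural theory: the Venkov--McMullen belt theorem and Tutte's matroid-theoretic characterization of total unimodularity. The ``if'' direction is comparatively direct, resting on the identity between a zonotope's volume and the sum of absolute minors of its generator matrix, combined with an explicit construction of the tiling lattice from the parallel-facet geometry of $Z$.
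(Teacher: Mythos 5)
The paper does not prove Theorem~\ref{T:tiles} at all --- it is quoted from McMullen and Erdahl --- so your proposal has to stand on its own against the known proofs, and its critical step does not. In the ``only if'' direction you translate the Venkov--McMullen belt condition (every belt has $4$ or $6$ facets) into the claim that every minimal linearly dependent subset of the generators has size $2$ or $3$, and you then assert that this is ``precisely the defining property of a regular matroid.'' Both halves of this are false. Regular matroids can have circuits of arbitrary size: the zonotope generated by $e_1,e_2,e_3,e_1+e_2+e_3$ (a rhombic dodecahedron, a classical lattice-tiling member of $\mathcal{T}_3$) has a circuit of size $4$, so your criterion would wrongly exclude it; conversely, four pairwise non-parallel vectors in the plane have all circuits of size $3$ yet realize $U_{2,4}$, which is not regular, and the corresponding octagonal zonogon does not tile --- so your criterion would wrongly admit it. The correct translation of the belt condition is about rank-$2$ contractions, not circuits: for every $(d-2)$-dimensional subspace spanned by generators, the remaining generators project onto at most three distinct lines, i.e.\ no $U_{2,4}$ arises as such a contraction. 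Getting from that local condition to binarity/regularity of the whole configuration, and then (via unique representability of regular matroids, the one ingredient you invoke correctly) to a totally unimodular representation up to linear equivalence and positive scalings, is the genuine content of McMullen's and Erdahl's arguments; your sketch replaces it with an incorrect equivalence, so the ``only if'' direction collapses.

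The ``if'' direction is also thinner than you suggest. Defining $\Lambda$ by ``center-to-center translations across adjacent facet-pairs'' presupposes the adjacency structure of a tiling you have not yet built; what one can do is take the lattice generated by twice the facet centers, prove $d(\Lambda)=\vol(Z)$ using the minor expansion you quote, and prove packing, after which density $1$ forces tiling (so no separate covering argument is needed --- a point worth making explicit). But the packing step is not the reduction you state: interior-disjointness of $\{Z+\lambda\}$ is equivalent to no nonzero $\lambda\in\Lambda$ lying in the interior of $2Z$, not to excluding $\{-1,0,1\}$-combinations of the generators, and the phrase ``follows from the unimodular structure'' is exactly the place where total unimodularity has to be used in earnest. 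As written, both directions outsource their essential steps to assertions, and in the ``only if'' direction the assertion is demonstrably wrong.
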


It is worth mentioning that lattice tiles in $\mathbb{R}^d$ do not
have to be zonotopes, see \cite{V,McM80,McM81}, and \cite[Chapter
3]{Z}.

\subsection{Statements of the main results}

The main result of \cite{laa} can be restated in the following
way. (A finite dimensional normed space is called {\it polyhedral}
if its unit ball is a polytope.)

\begin{theorem}\label{T:laa+} A ball $Z$ is linearly equivalent to
an MVSE for some d-dimensional polyhedral space
$X$ if and only if $Z\in{\cal T}_d$.
\end{theorem}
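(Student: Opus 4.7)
The plan is to reduce the theorem, via Theorem~\ref{T:tiles}, to the following equivalent assertion: a ball $Z$ is linearly equivalent to an MVSE of some $d$-dimensional polyhedral space if and only if $Z$ is a $d$-dimensional zonotope that is a lattice tile in $\mathbb{R}^d$. With this reduction the totally unimodular matrices and the class $\mt_d$ need not be handled directly; instead the proof focuses on recognizing MVSEs as exactly the lattice-tiling zonotopes.

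For the forward direction ($\Rightarrow$), let $A$ be an MVSE for a $d$-dimensional polyhedral space $X$. First I would show that $A$ may be taken to be a zonotope. Since $X$ is polyhedral it embeds isometrically into some $\ell_\infty^N$, and any linear projection $P:\ell_\infty^N\to X$ sends the cube $B_{\ell_\infty^N}$ to a zonotope $P(B_{\ell_\infty^N})$ that contains $B_X$. Combining Lemma~\ref{L:H} (which I am taking to characterize SEs by a hyperplane/section condition forcing them to dominate $P(B_{\ell_\infty^N})$ for some projection) with the compactness argument mentioned in the text, one obtains a minimizer that is already of this zonotopal form. Second, I would argue that volume minimality forces $A$ to tile $\mathbb{R}^d$ by translates along some lattice $\Lambda$: a fundamental-domain argument yields a lower bound $\vol A\ge d(\Lambda_X)$ for an appropriate lattice $\Lambda_X$ determined by the combinatorics of the facets of $B_X$, and equality necessarily occurs at a tiling configuration. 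Theorem~\ref{T:tiles} then places $A$ in $\mt_d$.

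For the reverse direction ($\Leftarrow$), let $Z\in\mt_d$, so by Theorem~\ref{T:tiles} $Z$ is a $d$-dimensional zonotope that lattice-tiles $\mathbb{R}^d$ with some lattice $\Lambda$, and moreover $Z=\sum_{i=1}^m I(a_i\tau_i)$ with $\tau_i$ the columns of a rank $d$ totally unimodular matrix and $a_i>0$. I would construct $X$ explicitly as $\mathbb{R}^d$ equipped with the quotient norm induced by the surjection $P:\ell_\infty^m\to\mathbb{R}^d$ sending $e_i\mapsto a_i\tau_i$; then $X$ is polyhedral, $Z=P(B_{\ell_\infty^m})$, and any isometric embedding of $X$ into a Banach space $Y$ admits a projection whose image lies in a translate/rotate of $Z$, so $Z$ is an SE for $X$. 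Since $\vol Z=d(\Lambda)$ by the tiling, and the lower bound from the forward direction matches $d(\Lambda)$ in this case, $Z$ is minimal.

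The main obstacle is the interface between volume minimization and lattice tiling. Producing a zonotopal SE is a soft step once Lemma~\ref{L:H} and the $\ell_\infty^N$-embedding are in hand, but showing that the minimizer among all zonotopal SEs is a tile of $\mathbb{R}^d$ is the geometric crux: one needs a sharp lower bound that equates $\inf\{\vol A:A\text{ is an SE}\}$ with the determinant of the lattice generated by the facet normals of $B_X$, and then one needs equality to propagate to a lattice tiling of the whole space. The totally unimodular structure is what ultimately makes such a tiling possible (via Theorem~\ref{T:tiles}), so the technical heart of the argument is extracting integrality of the minors of the generating matrix from the extremality of the volume.
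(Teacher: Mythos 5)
First, note that the paper itself does not prove Theorem \ref{T:laa+}: it is quoted as a restatement of the main result of \cite{laa}, and only its ingredients (the representation of an MVSE of a polyhedral space as a minimal-volume projection $P(B_\infty^m)$ of a cube, and the volume formula for such projections in terms of Pl\"ucker coordinates) reappear in the proof of Lemma \ref{L:APPROX}. Measured against that argument, your proposal has a genuine error in the ``if'' direction. If you give $\mathbb{R}^d$ the quotient norm induced by $P:\ell_\infty^m\to\mathbb{R}^d$, $e_i\mapsto a_i\tau_i$, then the unit ball of the resulting space $X$ is exactly $Z=P(B_\infty^m)$. Asking $Z=B_X$ to be an SE for $X$ asks for a projection of norm one onto $X$ from \emph{every} superspace, i.e.\ that $X$ be $1$-injective; for finite-dimensional spaces this forces $X$ to be isometric to $\ell_\infty^d$ and $Z$ to be a parallelepiped. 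Already for the hexagon in ${\cal T}_2$ the claim fails, since the hexagonal space has projection constant strictly greater than $1$. The correct construction (as in \cite{laa}, and visible in the matrix manipulations in the proof of Theorem \ref{T:NP}) takes $X$ to be a suitably chosen $d$-dimensional \emph{subspace} of $\ell_\infty^m$ and then verifies, via the volume formula, that the minimal-volume projection of $B_\infty^m$ onto $X$ is linearly equivalent to the prescribed $Z$; the phrase ``admits a projection whose image lies in a translate/rotate of $Z$'' does not survive this test.

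In the ``only if'' direction the sound part of your argument is the reduction: every SE for a polyhedral $X\subset\ell_\infty^N$ contains $P(B_\infty^N)$ for some projection $P$ (this is the definition of SE applied to $Y=\ell_\infty^N$, not Lemma \ref{L:H}, which is only a statement about Hausdorff closedness), such an image is itself an SE by injectivity of $\ell_\infty^N$, and hence an MVSE coincides with a minimal-volume zonotopal projection of a cube. But the crux --- that volume minimality forces membership in ${\cal T}_d$, equivalently (by Theorem \ref{T:tiles}) the lattice-tiling property --- is exactly the step you do not supply: no lattice ``$\Lambda_X$ determined by the combinatorics of the facets of $B_X$'' is constructed, no inequality $\vol A\ge d(\Lambda_X)$ is proved, and ``equality necessarily occurs at a tiling configuration'' simply restates the theorem. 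In \cite{laa} this step is carried out by writing $\vol P(B_\infty^m)$ as $2^d\sum_i|v_i|\big/\big|\sum_i\theta_iu_iv_i\big|$, with $u_i$ the Pl\"ucker coordinates of $X$ and $v_i$ those of the kernel, minimizing over kernels, and extracting total unimodularity from the equality case (the $\varepsilon=0$ analogue of the perturbative analysis in Section 3 of the present paper); the tiling characterization of ${\cal T}_d$ plays no role in that proof. So the plan to route everything through lattice tilings is a different and a priori attractive idea, but as written it leaves the technical heart of both implications unproved.
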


In \cite{Ost04} it was shown that for $d=2$ the statement of
Theorem \ref{T:laa+} is valid without the restriction of
polyhedrality of $X$. The main purpose of the present paper is to
prove the same for each $d\in\mathbb{N}$. It is clear that it is
enough to prove

\begin{theorem}\label{T:MVSE}
Each MVSE for a $d$-dimensional space is in ${\cal T}_d$.
\end{theorem}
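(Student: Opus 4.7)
The plan is to reduce the general case to the polyhedral case handled by Theorem~\ref{T:laa+}, using polyhedral approximation of $B_X$ and the lattice-tile characterization of $\mt_d$ provided by Theorem~\ref{T:tiles}. Fix an MVSE $A$ for the $d$-dimensional space $X$ and pick polytopes $P_n$ with $P_n\subset B_X\subset(1+\ep_n)P_n$, $\ep_n\downarrow 0$, and let $X_n$ be the polyhedral space with $B_{X_n}=P_n$. Since the Banach--Mazur distance between $X$ and $X_n$ tends to one, any isometric embedding of $X_n$ into a Banach space yields an embedding of $X$ of distortion at most $1+\ep_n$, and a routine book-keeping argument transfers SEs across: in particular, if $A_n$ is an MVSE for $X_n$ then $\vol A_n\le(1+\ep_n)^d\vol A$, and every Hausdorff sub-limit of the sequence $\{A_n\}$ is an SE for $X$.

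By Theorem~\ref{T:laa+}, $A_n\in\mt_d$ for every $n$, so by Theorem~\ref{T:tiles} each $A_n$ is a lattice tile for some lattice $\Lambda_n\subset\mathbb{R}^d$ with $d(\Lambda_n)=\vol A_n$. Apply Blaschke selection to extract a subsequence along which $A_n\to A^*$ in Hausdorff metric; the uniform upper and lower bounds on $\vol A_n$ (the latter coming from $A_n\supset P_n$, hence $A_n$ contains a body close to $B_X$) together with uniform bounds on $d(\Lambda_n)$ allow one to extract simultaneously a limit lattice $\Lambda^*$ for which $A^*$ remains a lattice tile, so $A^*\in\mt_d$ by Theorem~\ref{T:tiles}. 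Continuity of the SE property makes $A^*$ an SE for $X$, and combining $\vol A^*\le\vol A$ with the minimality of $A$ gives $\vol A^*=\vol A$, so $A^*$ is itself an MVSE for $X$ lying in $\mt_d$.

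The final and hardest step is to pass from "\emph{some} MVSE of $X$ lies in $\mt_d$" to "\emph{every} MVSE of $X$ lies in $\mt_d$", as demanded by the statement. I expect this to proceed through a direct structural analysis of MVSEs: using the characterization of sufficient enlargements in terms of norm-preserving extensions of functionals (Lemma~\ref{L:H}), one identifies the extreme points of an arbitrary MVSE $A$ with normalized sums of columns of a matrix witnessing the minimality, and a volume-rigidity argument then forces that matrix to be totally unimodular. The principal obstacle is this rigidity: a priori, two MVSEs of the same volume for the same $X$ might carry different combinatorial structure, and one must rule this out by exploiting the sharpness of the volume lower bound for SEs. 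A secondary subtlety is ensuring that the tile property survives the joint limit of bodies and lattices without $A^*$ degenerating in dimension, which requires the uniform inradius lower bound mentioned above.
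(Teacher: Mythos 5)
Your first two paragraphs are essentially sound (modulo details such as verifying that the Hausdorff limit of the $A_n$ is still a \emph{zonotope} before invoking Theorem~\ref{T:tiles} -- lattice tiles need not be zonotopes, so one needs the bounded-generator argument as in Lemma~\ref{L:closed}), but they only yield that \emph{some} MVSE $A^*$ of $X$ lies in $\mt_d$: the bodies $A_n$ are MVSEs of the approximating spaces $X_n$ and bear no a priori relation to the given MVSE $A$ -- they are not contained in $A$ and their limit need not be $A$. The passage from ``some'' to ``every'', which you correctly identify as the hardest step, is precisely where your proposal has no argument. The sketch you give is not a mechanism: Lemma~\ref{L:H} is not a characterization of SEs via norm-preserving extensions of functionals (it only says the set of SEs is Hausdorff-closed), and ``a volume-rigidity argument then forces that matrix to be totally unimodular'' is a restatement of the conclusion, not a proof. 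In fact this rigidity is the entire technical content of the paper (the Main Lemma~\ref{L:APPROX}, proved via Pl\"ucker relations, the six-minors Lemma~\ref{L:six}, concentration of the generating measure on caps, and the construction of a totally unimodular matrix), i.e.\ a \emph{quantitative stability} version of the polyhedral Theorem~\ref{T:laa+} for near-minimal SEs, not just exact minimizers.

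The paper's route around your obstacle is worth noting because it is structurally different from yours: instead of taking MVSEs of the approximating polyhedral spaces $Y_n$, it keeps the given MVSE $A$ itself in play. Since $A$ is an SE for $Y_n$ of volume at most $(1+\ep_n)^d$ times the minimum for $Y_n$, the Main Lemma produces sub-bodies $\tilde A_n\subset A$ that are SEs for $Y_n$ and within Banach--Mazur distance $\mathfrak t_d(\ep_n)\to 1$ of $\mt_d$; passing to a Hausdorff limit gives $D\subset A$ with $D\in\mt_d$ (Lemma~\ref{L:closed}) and $D$ an SE for $X$ (Lemma~\ref{L:H}), and then minimality of $\vol A$ together with the inclusion $D\subset A$ forces $D=A$. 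It is this inclusion $D\subset A$, supplied by the containment statement in Lemma~\ref{L:APPROX}, that converts ``some MVSE is in $\mt_d$'' into ``the given MVSE is in $\mt_d$''; without an analogue of it, your plan cannot reach the stated theorem.
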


Using Theorem \ref{T:MVSE} we show that spaces having
non-parallelepipedal MVSE cannot be strictly convex or smooth.
More precisely, we prove

\begin{theorem}\label{T:NP} Let $X$ be a finite dimensional
normed linear space having an MVSE that is not a parallelepiped.
Then $X$ contains a two-dimensional subspace whose unit ball is
linearly equivalent to the regular hexagon.
\end{theorem}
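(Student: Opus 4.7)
The plan is to reduce the statement to the two-dimensional case established in \cite{Ost04}. By Theorem~\ref{T:MVSE} the given MVSE $A$ is linearly equivalent to a zonotope $\sum_{i=1}^m I(a_i\tau_i)$ whose generators are columns of a rank-$d$ totally unimodular $d\times m$ matrix, and by Theorem~\ref{T:tiles} $A$ is a lattice tile. The hypothesis that $A$ is not a parallelepiped forces $\{\tau_i\}$ to contain strictly more than $d$ pairwise non-parallel directions, so that the zonotope structure of $A$ is genuinely non-trivial.

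The first geometric step is to exhibit a two-dimensional linear subspace $W\subset X$ such that the cross-section $A\cap W$ is an affine hexagon rather than a parallelogram. For this I would invoke the classical characterization of zonotopal parallelohedra (McMullen/Coxeter): every belt of such a tile has either $4$ or $6$ facets, and a non-parallelepipedal tile must have at least one $6$-belt. A $6$-belt in direction $\tau$ means that, after projection along $\tau$, three of the generators yield pairwise non-parallel vectors $\bar\tau_a,\bar\tau_b,\bar\tau_c$; total unimodularity then forces the hexagon relation $\bar\tau_c=\pm\bar\tau_a\pm\bar\tau_b$. Taking $W=\spa(\tau_a,\tau_b)$, a direct verification (using the explicit parametrization of the Minkowski sum subject to the constraint that the component perpendicular to $W$ vanishes) shows that the section $A\cap W$ is an affine hexagon whose three pairs of parallel edges correspond to the directions $\bar\tau_a,\bar\tau_b,\bar\tau_c$.

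The heart of the proof is to propagate this hexagonal structure from $A$ to $B_X$, that is, to show that $B_X\cap W$ is itself an affine hexagon. For this I plan to use the minimality of $\vol A$ together with Lemma~\ref{L:H} and its consequences, which characterize an MVSE via tight contact conditions between $A$ and $B_X$ on the boundary. The argument should proceed by contradiction: if $B_X\cap W$ were strictly smaller than the hexagon $A\cap W$ (for instance, a parallelogram or a hexagon with shorter sides), one could assemble an SE for $X$ of strictly smaller volume by replacing the hexagonal ``slab'' of $A$ over $W$ with a thinner slab built from an MVSE of the two-dimensional normed subspace $W$ --- supplied by the two-dimensional case of Theorem~\ref{T:laa+} from \cite{Ost04} --- combined with the zonotope structure of $A$ in directions complementary to $W$. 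This would contradict the minimality of $\vol A$.

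Once $B_X\cap W$ is known to be an affine hexagon, the conclusion is immediate: every two-dimensional normed space whose unit ball is an affine hexagon is linearly equivalent to the space whose unit ball is the regular hexagon, so $W$ is the required subspace. The main obstacle I expect is the third step: MVSE does not in general restrict cleanly to a subspace, so the ``reassembly'' of a hypothetical smaller MVSE of $W$ into a strictly smaller SE of $X$ must be carried out explicitly, using the zonotope description from Theorem~\ref{T:MVSE} and a careful accounting of the totally unimodular structure of the full set of generators. The interplay between the global zonotope/TU structure of $A$ and the local two-dimensional extremality on $W$ is what makes Lemma~\ref{L:H} indispensable.
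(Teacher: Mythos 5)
There is a genuine gap, and it sits exactly where you yourself flag it: the step that ``propagates'' the hexagonal structure from the enlargement $A$ to the unit ball $B_X$. As written, this step is not an argument but a wish. Being an SE is a global property --- for \emph{every} isometric embedding of $X$ into \emph{every} Banach space $Y$ there must exist a projection with image inside the candidate set --- so replacing a hexagonal ``slab'' of $A$ over $W$ by a thinner one gives no reason whatsoever that the modified body is still an SE, and you give no construction of the required projections. Moreover, Lemma~\ref{L:H} says only that the family of SEs is closed under Hausdorff limits; it does not provide any ``tight contact conditions between $A$ and $B_X$ on the boundary,'' so the tool you invoke for the contradiction does not do what you need. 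Even the target of the step is doubtful: there is no reason the plane $W$ on which a section of $A$ is hexagonal (obtained from a $6$-belt, which in any case controls a two-dimensional \emph{projection} of $A$, not a section) is a plane on which $B_X$ is hexagonal, and nothing in Theorem~\ref{T:MVSE} or Theorem~\ref{T:laa+} links sections of $A$ to sections of $B_X$ in this way.

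The paper closes this gap by a completely different mechanism. For polyhedral $X\subset\ell_\infty^m$ it analyzes the minimal-volume projection directly: $A$ not a parallelepiped means the matrix $\tilde Q$ spanning $(\ker P)^\perp$ has a minimal linearly dependent set of at least three rows, and the extremality conditions on the Pl\"ucker coordinates of $X$ from \cite{laa} force a representing matrix of $X$ of the form (\ref{Z}) with $|b_i|\le 1$, $|c_i|\le 1$, $|b_i-c_i|\le 1$; the span of the first two columns is then \emph{isometrically} the hexagonal space $\|(\alpha,\beta)\|=\max(|\alpha|,|\beta|,|\alpha+\beta|)$, so the hexagon is found inside $B_X$ itself, not inside $A$. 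For general $Y$ the paper uses only the corollary of Theorem~\ref{T:MVSE} that every MVSE is a polytope, passes to a polyhedral majorant via Lemma~\ref{T:polyhedral}, and then --- this is the step your plan has no analogue of --- shows that the hexagon survives the passage from $B_X$ down to $B_Y$: the points $e_1,e_2,e_1-e_2$ are centers of facets of two parallelepipedal MVSEs $P_1(B_\infty^m)$, $P_2(B_\infty^m)$, hence of minimal-volume parallelepipeds containing $B_Y$, and by the Pe\l czy\'nski--Szarek theorem \cite{PS} such facet centers lie in $B_Y$. Without some substitute for this last argument (or for the matrix analysis in the polyhedral case), your outline establishes at best that $A$ has hexagonal two-dimensional structure, which is already contained in Theorem~\ref{T:MVSE} and does not yield a hexagonal subspace of $X$.
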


\begin{remark}{Remarks. 1} Theorem \ref{T:NP} is a simultaneous
generalization of \cite[Theorem 4]{Ost04} (which is a special case
of Theorem \ref{T:NP} corresponding to the case $\dim X=2$) and of
\cite[Theorem 7]{archiv} (which states that each MVSE for
$\ell_2^n$ is a cube circumscribed about $B_2^n$).
\smallskip

\noindent{\bf 2.} The fact that $X$ contains a two-dimensional
subspace whose unit ball is linearly equivalent to a regular
hexagon does not imply that $X$ has an MVSE that is not a
parallelepiped. A simplest example supporting this statement is
$\ell_\infty^3$.
\end{remark}

\section{Proof of Theorem \ref{T:MVSE}}

First we show that it is enough to prove the following lemmas. It
is worth mentioning that our proof of Theorem \ref{T:MVSE} goes
along the same lines as the proof of its two-dimensional version
in \cite{Ost04}. The most difficult part of the proof is a
$d$-dimensional version of the approximation lemma (\cite[Lemma 2,
p.~380]{Ost04}), it is the contents of Lemma \ref{L:APPROX} of the
present paper. Also, a two-dimensional analogue of Lemma
\ref{L:closed} is completely trivial.

\begin{lemma}\label{L:closed}
Let $T_n\subset \mathbb{R}^d,~ n\in\mathbb{N}$ be such that
$T_n\in {\cal T}_d$, and $\{T_n\}_{n=1}^\infty$ converges with
respect to the Hausdorff metric to a $d$-dimensional set $T$. Then
$T\in {\cal T}_d$.
\end{lemma}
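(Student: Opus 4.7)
The plan is to combine a combinatorial-compactness argument with the lattice-tile characterization of $\mt_d$ from Theorem~\ref{T:tiles}. Roughly, I want to extract subsequential limits both of the generating line segments of $T_n$ (showing the limit is a zonotope with boundedly many generators) and of the tiling lattices of $T_n$ (showing the limit is a lattice tile), and then invoke Theorem~\ref{T:tiles}.

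First I would bound the combinatorial complexity. Each $T_n\in\mt_d$ can be written as a Minkowski sum of line segments $T_n=\sum_{i=1}^{m_n}I(w_i^{(n)})$, where after combining parallel segments the $w_i^{(n)}$ are pairwise non-parallel and, by definition of $\mt_d$, are positive multiples of the images under a linear isomorphism $L_n$ of columns of a totally unimodular matrix. Since TU-columns lie in $\{-1,0,1\}^d$, there are at most $(3^d-1)/2$ pairwise non-parallel nonzero such columns, so $m_n$ is bounded by a constant depending only on $d$, and there are only finitely many equivalence classes of admissible column-sets. Passing to a subsequence I may assume $m_n\equiv m$ and that the combinatorial TU-type is fixed, i.e.\ $T_n=L_n\bigl(\sum_{i=1}^m a_i^{(n)}I(\tau_i)\bigr)$ with $\tau_i$ the columns of a single TU matrix.

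Next, writing $w_i^{(n)}=a_i^{(n)}L_n\tau_i$, the inclusion $I(w_i^{(n)})\subset T_n$ together with the Hausdorff convergence of $T_n$ to $T$ gives that each sequence $(w_i^{(n)})_n$ is bounded in $\mathbb{R}^d$. Extracting a further subsequence, $w_i^{(n)}\to w_i$, and Hausdorff continuity of finite Minkowski sums forces $T=\sum_{i=1}^m I(w_i)$, so $T$ is a zonotope.

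The hard step is to show $T$ is a lattice tile. By Theorem~\ref{T:tiles}, each $T_n$ is a lattice tile with some lattice $\Lambda_n$, and $d(\Lambda_n)=\vol T_n\to\vol T>0$, so the determinants are bounded away from $0$ and $\infty$. For a uniform lower bound on the shortest nonzero vector of $\Lambda_n$, I would observe that for $v\in\Lambda_n\setminus\{0\}$ the translates $T_n$ and $T_n+v$ have disjoint interiors, hence $v\notin\mathrm{int}(T_n-T_n)=\mathrm{int}(2T_n)$; since $T_n\to T$ and $T$ is $d$-dimensional, $T_n$ contains a ball of radius $r>0$ for all large $n$, giving $\|v\|\ge 2r$. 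Mahler's compactness theorem then yields a further subsequence with $\Lambda_n\to\Lambda$ and $d(\Lambda)=\vol T$.

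Finally I verify that $T$ is tiled by the $\Lambda$-translates. Given $x\in\mathbb{R}^d$, choose $v_n\in\Lambda_n$ with $x-v_n\in T_n$; boundedness of $T_n$ makes $(v_n)$ bounded, so after extracting, $v_n\to v\in\Lambda$ and $x-v\in T$ by Hausdorff convergence, proving the translates cover. The equality $\vol T=d(\Lambda)$ then forces the interiors of distinct translates to be disjoint, so $T$ is a lattice tile. Since $T$ is a $d$-dimensional zonotope that is a lattice tile, Theorem~\ref{T:tiles} gives $T\in\mt_d$.

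The main obstacle I expect is Step~3: justifying the passage to the limit of the lattice tilings, both the application of Mahler's criterion (the lower bound on the shortest vector) and the persistence of the tiling property in the limit. An alternative would try to promote the limits $w_i$ directly to a TU-representation of $T$, but degenerations where some $a_i^{(n)}\to\infty$ while $L_n\tau_i\to 0$ make the bookkeeping delicate, so routing the argument through the intrinsic lattice-tile characterization is cleaner.
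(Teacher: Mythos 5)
Your proof is correct and follows essentially the same route as the paper: the limit is shown to be a zonotope using the bound on the number of possible totally unimodular generator directions, it is shown to be a lattice tile by applying Mahler's selection theorem to the tiling lattices $\Lambda_n$ (using $d(\Lambda_n)=\vol T_n$ and a uniform ball inside the $T_n$), and Theorem~\ref{T:tiles} is invoked to conclude $T\in\mt_d$. The only difference is that you spell out details the paper leaves as ``easy to verify,'' namely the shortest-vector bound via $v\notin\mathrm{int}(2T_n)$ and the covering-plus-volume argument showing $T$ tiles with respect to the limit lattice.
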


\begin{remark}{Remark} If a sequence $\{T_n\}_{n=1}^\infty\subset\mt_d$ converges to a lower-dimensional set
$T$, the set $T$ does not have to be in $\mt_{\dim T}$. In fact,
as it was already mentioned, ${\cal T}_2$ is the set of all
symmetric hexagons and parallelograms. On the other hand, it is
easy to find a Hausdorff convergent sequence of elements of ${\cal
T}_3$ whose limit is an octagon.
\end{remark}

\begin{lemma}[Main lemma]\label{L:APPROX} For each $d\in\mathbb{N}$ there exist
$\psi_d>0$ and a function $\mathfrak{t}_d:(0,\psi_d)\to(1,\infty)$
satisfying the conditions:
\medskip

\noindent{\rm (1)} $\lim_{\varepsilon\downarrow
0}\mathfrak{t}_d(\varepsilon)=1$;
\medskip

\noindent{\rm (2)} If $Y$ is a $d$-dimensional polyhedral space,
$B$ is an MVSE for $Y$, and $A$ is an SE for $Y$ satisfying
\begin{equation}\label{E:lemma1}
\vol A\le (1+\varepsilon)^d\vol B
\end{equation}
for some $0<\ep<\psi_d$, then $A$ contains a ball $\tilde A$
satisfying the conditions:
\medskip

\noindent{\rm (a)} $d(\tilde A,T)\le \mathfrak{t}_d(\varepsilon)$
for some $T\in{\cal T}_d$, where by $d(\tilde A,T)$ we denote the
Banach--Mazur distance;
\medskip

\noindent{\rm (b)} $\tilde A$ is an SE for $Y$.
\end{lemma}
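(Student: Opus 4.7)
The plan is to leverage Theorem \ref{T:laa+}, which identifies MVSEs of polyhedral spaces with elements of $\mt_d$, together with the lattice--tile structure from Theorem \ref{T:tiles}. By Theorem \ref{T:laa+}, after a linear isomorphism we may write $B = \sum_{i=1}^m I(a_i \tau_i)$, where the columns $\tau_i$ of some $d \times m$ totally unimodular matrix span $\mathbb{R}^d$; by Theorem \ref{T:tiles}, $B$ is a lattice tile for some lattice $\Lambda$ with $d(\Lambda) = \vol B$. This combinatorial/geometric data for $B$ will drive the entire construction.

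The first step is to invoke the SE-characterization Lemma \ref{L:H} to reformulate the condition that $A$ is an SE for $Y$ as a list of necessary features $A$ must contain, indexed by the facets of $B_Y$ (finitely many, since $Y$ is polyhedral). These are precisely the features that force $B$ to have its zonotopal shape in the proof of Theorem \ref{T:laa+}. I would then extract from $A$ a candidate zonotope $T = \sum_{i=1}^m I(a_i' \tau_i) \in \mt_d$ by keeping the directions $\tau_i$ of $B$ and choosing each scalar $a_i'$ to reflect how far the corresponding required feature of $A$ extends in direction $\tau_i$. Set $\tilde A := T$, or $\tilde A := T \cap A$, depending on whether $T \subseteq A$; by construction $\tilde A$ retains every feature demanded by Lemma \ref{L:H}, giving condition (b).

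The main estimate is then to bound the deviations $|a_i' - a_i|$ in terms of $\ep$. Because $B$ is the minimum-volume SE (Theorem \ref{T:laa+}), the zonotope $T$ built from $A$'s features must satisfy $\vol T \ge \vol B$; combined with the hypothesis $\vol A \le (1+\ep)^d \vol B$ and the monotonicity of the assignment $A \mapsto T$, the standard volume formula for zonotopes (a sum of absolute values of $d\times d$ minors of the defining matrix, with coefficients $\prod a_i'$) forces each $a_i'$ to lie within a factor $1 + \mathfrak{t}_d(\ep) - 1$ of $a_i$. Translating this into a Banach--Mazur estimate yields $d(\tilde A, T) \le \mathfrak{t}_d(\ep)$ with $\mathfrak{t}_d(\ep) \downarrow 1$, giving condition (a).

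The main obstacle is the accurate extraction of the scalars $a_i'$. In dimension two (the base case of \cite{Ost04}) the structure is trivial because $\mt_2$ consists only of symmetric hexagons and parallelograms, so the $a_i'$ can be read off from edge lengths. In higher dimensions $m$ can exceed $d$ substantially, the SE conditions may overdetermine the $a_i'$, and different features of $A$ can interact through the combinatorics of the totally unimodular matrix. A possible fallback is a compactness/contradiction argument: if no such $\tilde A$ existed for some sequence $\ep_n \downarrow 0$, extract a Hausdorff-convergent subsequence of the corresponding bodies whose limit is an MVSE for a polyhedral space and hence, by Theorem \ref{T:laa+} together with Lemma \ref{L:closed}, lies in $\mt_d$ --- contradicting the presumed failure of (a).
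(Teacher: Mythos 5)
There is a genuine gap, and it sits exactly where the paper's real work lies. Your construction posits that the zonotope approximating $\tilde A$ can be taken of the form $\sum_i I(a_i'\tau_i)$ with the \emph{same} generator directions $\tau_i$ as the chosen MVSE $B$. Nothing justifies this: MVSEs are not unique, and a near-minimal-volume SE $A$ for $Y$ has no a priori relation to the combinatorial data of $B$. The paper instead takes $\tilde A=P(B_\infty^m)$ for a projection $P:\ell_\infty^m\to Y$ with $P(B_\infty^m)\subset A$ (this is what gives (b) for free, using polyhedrality of $Y$; note also that Lemma \ref{L:H} only asserts Hausdorff-closedness of the family of SEs, it is not a facet-indexed ``characterization'' of SEs, and your $\tilde A:=T$ or $T\cap A$ is never shown to be an SE or to sit inside $A$). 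Then the near-equality $\vol\tilde A\le(1+\ep)^d\vol B$ is converted, via Pl\"ucker coordinates and the zonotope volume formula, into the inequality (\ref{E:min3}), and the entire difficulty is to show that this inequality forces the generating measure of $Z=P(B_\infty^m)$ to concentrate, up to a quantitative error, near at most $2^d$ directions which in a suitable basis form a totally unimodular matrix. That is the content of Lemmas \ref{L:bases}--\ref{L:BM} (the six-large-minors lemma via the Pl\"ucker relations, the top-set argument, the graph/forest construction, and the Gomory-type Lemma \ref{L:Gomory}); your sketch assumes the answer (the directions $\tau_i$) and reduces the problem to tuning scalars $a_i'$, which bypasses rather than solves the problem. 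The ``monotonicity of the assignment $A\mapsto T$'' and the step forcing $|a_i'-a_i|$ small are not defined or proved, and the number $m$ of generators of $\tilde A$ need not even match that of $B$.

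The compactness fallback does not repair this, because it is circular and loses the required uniformity. If the lemma failed along $\ep_n\downarrow0$, the spaces $Y_n$, the enlargements $A_n$, and the ambient $\ell_\infty^{m_n}$ all vary (with $m_n$ unbounded); after normalization a Hausdorff limit $D$ of suitable $\tilde A_n$ would at best be an SE of minimal volume for a limit space $Y_\infty$, and Hausdorff limits of polyhedral balls need not be polyhedral. So Theorem \ref{T:laa+} does not apply to the limit, and concluding $D\in\mathcal{T}_d$ is precisely Theorem \ref{T:MVSE} for general spaces --- the statement the Main Lemma exists to prove (Lemma \ref{L:closed} is also of no help here, since the $\tilde A_n$ are not known to lie near $\mathcal{T}_d$, which is what is to be shown). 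Moreover the lemma requires a single function $\mathfrak{t}_d(\ep)$, depending on $d$ and $\ep$ only and valid uniformly over all polyhedral $Y$ and all $m$; this uniformity is exactly what the paper's explicit quantitative scheme ($\omega(\ep)=\ep^{4k}$, $\delta(\ep)=\ep^{4dk}$, etc.) delivers and what a soft limiting argument does not.
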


\begin{lemma}\label{L:H} {\rm\cite[Lemma 3]{Ost04}}
The set of all sufficient enlargements for a finite dimensional
normed space $X$ is closed with respect to the Hausdorff metric.
\end{lemma}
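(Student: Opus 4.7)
The plan is to show that if a sequence $\{A_n\}$ of SEs for $X$ converges in the Hausdorff metric to a ball $A$, then $A$ is itself an SE. Given an arbitrary isometric embedding $X\hookrightarrow Y$, each $A_n$ furnishes a projection $P_n:Y\to X$ with $P_n(B_Y)\subset A_n$, and I want to extract a limiting projection $P:Y\to X$ with $P(B_Y)\subset A$. Since $A_n\to A$ in Hausdorff metric and $A$ is bounded, the $A_n$ lie in a common multiple $M\cdot B_X$ of the unit ball of $X$; hence the inclusion $P_n(B_Y)\subset A_n\subset M\cdot B_X$ gives $\|P_n\|\le M$ uniformly.

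Because $X$ is finite-dimensional, I view each $P_n$ as an element of the $M$-ball of bounded linear maps $Y\to X$, equipped with the topology of pointwise convergence. This ball sits inside the Tychonoff-compact product $\prod_{y\in Y}(M\|y\|\cdot B_X)$, and linearity is a pointwise-closed condition, so the ball is itself compact. Passing to a convergent subnet $P_{n_\alpha}\to P$ pointwise yields a bounded linear $P:Y\to X$. Since $P_n|_X=I_X$ for every $n$, the pointwise limit satisfies $P|_X=I_X$, so $P$ is a genuine projection onto $X$.

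To verify $P(B_Y)\subset A$, fix $y\in B_Y$: then $P_{n_\alpha}(y)\in A_{n_\alpha}$ and $\dist(P_{n_\alpha}(y),A)\le d_H(A_{n_\alpha},A)\to 0$, while $P_{n_\alpha}(y)\to P(y)$ in the finite-dimensional space $X$, so closedness of $A$ forces $P(y)\in A$. The main obstacle I anticipate is that $Y$ need not be separable, so one cannot in general extract a pointwise-convergent \emph{subsequence} from $\{P_n\}$; the net-theoretic compactness argument above is designed precisely to sidestep this issue, and everything else reduces to standard finite-dimensional limiting.
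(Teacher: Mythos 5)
Your argument is correct and is essentially the same as the standard proof behind the cited \cite[Lemma 3]{Ost04} (which this paper quotes without reproducing): uniform boundedness of the projections $P_n$ coming from $P_n(B_Y)\subset A_n\subset M\,B_X$, a pointwise-convergence compactness argument (Tychonoff/Alaoglu, with nets since $Y$ need not be separable) producing a limiting projection $P$ with $P|_X=I_X$, and Hausdorff convergence forcing $P(B_Y)\subset A$. The only cosmetic addition worth making is the one-line remark that every SE contains $B_X$ (take $Y=X$ in the definition), so a Hausdorff limit of SEs is automatically a ball, which justifies your standing assumption that the limit $A$ is a ball.
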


\begin{proof}{Proof of Theorem \ref{T:MVSE}} (We assume that
Lemmas \ref{L:closed} and \ref{L:APPROX} have been proved.) Let
$X$ be a $d$-dimensional space and let $A$ be an MVSE for $X$. Let
$\{\varepsilon_n\}_{n=1}^\infty$ be a sequence satisfying
$\psi_d>\varepsilon_n>0$ and $\varepsilon_n\downarrow 0$. Let
$\{Y_n\}_{n=1}^\infty$ be a sequence of polyhedral spaces
satisfying
\begin{equation}\label{E:Y_n}
\frac1{1+\varepsilon_n}B_X\subset
B_{Y_n}\subset B_X.
\end{equation}
Then $A$ is an SE for $Y_n$. Let $B_n$ be an MVSE for $Y_n$. Then
$(1+\varepsilon_n)B_n$ is an SE for $X$. Since $A$ is a
minimal-volume SE for $X$, we have
$$\vol A\le \vol\left((1+\varepsilon_n)B_n\right)=
(1+\varepsilon_n)^d\vol B_n.$$
\smallskip

By Lemma \ref{L:APPROX} for every $n\in\mathbb{N}$ there exists an
SE $\tilde A_n$ for $Y_n$ satisfying
$$\tilde A_n\subset A$$
and
\begin{equation}\label{E:n}
d(\tilde A_n, T_n)\le \mathfrak{t}_d(\varepsilon_n)
\end{equation}
for some $T_n\in{\cal T}_d$.
\medskip

The condition (\ref{E:Y_n}) implies that
$(1+\varepsilon_n)\tilde A_n$ is an SE for $X$.
\medskip

The sequence $\{(1+\varepsilon_n)\tilde A_n\}_{n=1}^\infty$ is
bounded (all of its terms are contained in $(1+\ep_1)A$). By the
Blaschke selection theorem \cite[p.~50]{Sch93} the sequence
$\{(1+\varepsilon_n)\tilde A_n\}_{n=1}^\infty$ contains a
subsequence convergent with respect to the Hausdorff metric. We
denote its limit by $D$, and assume that the sequence
$\{(1+\ep_n)\tilde A_n\}_{n=1}^\infty$ itself converges to $D$.
\medskip

Observe that each $\tilde A_n$ contains $(1/(1+\varepsilon_1))B_X$
and is contained in $A$. By (\ref{E:n}) we may assume without loss
of generality that $T_n$ are balls in $X$ satisfying
\begin{equation}\label{E:inclusions}
\frac1{1+\varepsilon_1}B_X\subset \tilde A_n\subset T_n\subset
\mathfrak{t}_d(\varepsilon_n)\tilde A_n\subset
\mathfrak{t}_d(\varepsilon_n) A.
\end{equation}

It is clear that $D$ is the Hausdorff limit of $\{\tilde
A_n\}_{n=1}^\infty$. From (\ref{E:inclusions}) we get that $D$ is
the Hausdorff limit of $\{T_n\}_{n=1}^\infty$. By Lemma
\ref{L:closed} we get $D\in{\cal T}_d$.
\medskip

By Lemma \ref{L:H} the set $D$ is an SE for $X$. Since
$(1+\varepsilon_n)\tilde A_n\subset (1+\varepsilon_n)A$, and
$(1+\varepsilon_n)A$ is Hausdorff convergent to $A$, we have
$D\subset A$. On the other hand, $A$ is an MVSE for $X$, hence
$D=A$ and $A\in{\cal T}_d$.
\end{proof}

\begin{proof}{Proof of Lemma \ref{L:closed}} By
Theorem \ref{T:tiles} the sets $T_n$ are lattice tiles. Let
$\{\Lambda_n\}_{n=1}^\infty$ be lattices corresponding to these
lattice tiles. Since volume is continuous with respect to the
Hausdorff metric (see \cite[p.~55]{Sch93}), the supremum
$\sup_n\vol (T_n)$ is finite. Since $T_n$ is a lattice tile with
respect to $\Lambda_n$, the determinant of $\Lambda_n$ satisfies
$d(\Lambda_n)=\vol (T_n)$. (Although I have not found this result
in the stated form, it is well known. It can be proved, for
example, using the argument from \cite[pp.~42--43, Proof of
Theorem 2]{GL87}.) Hence $\displaystyle{\sup_n
d(\Lambda_n)<\infty}$. Since $T$ is $d$-dimensional, there exists
$r>0$ such that $rB_2^d\subset T$. Choosing a smaller $r>0$, if
necessary, we may assume that $rB_2^d\subset T_n$ for each $n$.
Therefore the lattices $\{\Lambda_n\}_{n=1}^\infty$ satisfy the
conditions of the selection theorem of Mahler (see, for example,
\cite[\S 17]{GL87}, where the reader can also find the standard
definition of convergence for lattices). Hence the sequence
$\{\Lambda_n\}_{n=1}^\infty$ contains a subsequence which
converges to some lattice $\Lambda$. It is easy to verify that $T$
tiles $\mathbb{R}^d$ with respect to $\Lambda$.

On the other hand, the number of possible distinct columns of a
totally unimodular matrix with columns from $\mathbb{R}^d$ is
bounded from above by $3^d$, because each entry is $0$, $1$, or
$-1$. (Actually a much better exact bound is known, see
\cite[p.~299]{Sch86}.) Using this we can show that $T$ is a
zonotope by a straightforward argument. Also we can use the
argument from \cite[Theorem 3.5.2]{Sch93} and the observation that
a convergent sequence of measures on the sphere of $\ell_2^d$,
each of whom has a finite support of cardinality $\le 3^d$,
converges to a measure supported on $\le 3^d$ points.

Thus, $T$ is a zonotope and a lattice tile. Applying Theorem
\ref{T:tiles} again, we get $T\in {\cal T}_d$.
\end{proof}

\section{Proof of the Main Lemma}

\subsection{Coordinatization}

\begin{proof}{Proof of Lemma \ref{L:APPROX}} In our argument the dimension $d$ is
fixed. Many of the parameters considered below depend on $d$,
although we do not reflect this dependence in our notation.
\medskip

Since $Y$ is polyhedral, we can consider $Y$ as a subspace of
$\ell_\infty^m$. Let $P:\ell_\infty^m\to Y$ be a linear projection
satisfying $P(B_\infty^m)\subset A$ (such a projection exists
because $A$ is an SE). Let $\tilde A=P(B_\infty^m)$. It is easy to
see that $\tilde A$ is an SE for $Y$. It remains to show that
$\tilde A$ is close to some $T\in {\cal T}_d$ with respect to the
Banach--Mazur distance.
\medskip

We consider the standard inner product on $\ell_\infty^m$. (The
unit vector basis is an orthonormal basis with respect to this
inner product.)
\medskip

Let $\{q_1,\dots,q_{m-d}\}$ be an orthonormal basis in $\ker P$.
Let $\{y_1,\dots, y_d\}$ be an orthonormal basis in $Y$. Let
$\tilde q_1,\dots,\tilde q_d$ be such that $\{\tilde
q_1,\dots,\tilde q_d,q_1,\dots,q_{m-d}\}$ is an orthonormal basis
in $\ell_\infty^m$.

\begin{lemma}\label{L:shape} {\bf (Image Shape Lemma)} Let
$P$ and $\tilde q_1,\dots,\tilde q_d$ be as above. Denote by
$\tilde Q=[\tilde q_1,\dots,\tilde q_d]$ the matrix whose columns
are $\tilde q_1, \dots,\tilde q_d$. Let $z_1,\dots,z_m$ be the
columns of the transpose matrix $\tilde Q^T$. Then $P(B_\infty^m)$
is linearly equivalent to the zonotope
$\sum_{i=1}^mI(z_i)\subset\mathbb{R}^d$.
\end{lemma}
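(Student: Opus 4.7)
The plan is to reduce $P(B_\infty^m)$ to the orthogonal projection onto $(\ker P)^\perp$ and then compute that orthogonal projection concretely in the basis $\tilde q_1,\dots,\tilde q_d$.

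First, I would record the elementary fact that, by linearity,
$$P(B_\infty^m)=P\Bigl(\sum_{i=1}^m I(e_i)\Bigr)=\sum_{i=1}^m I(P(e_i)),$$
so $P(B_\infty^m)$ is automatically a zonotope in $Y$; the content is identifying the generators, up to linear equivalence, with the rows of $\tilde Q$.

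Next, let $P_\perp:\ell_\infty^m\to(\ker P)^\perp$ be the orthogonal projection. Since $\ker P_\perp=\ker P$, and the restriction of $P$ to $(\ker P)^\perp$ is a linear isomorphism onto $Y$ (injective because $\ker P\cap(\ker P)^\perp=\{0\}$, surjective because $\ell_\infty^m=\ker P\oplus(\ker P)^\perp$ and $P$ kills the first summand), we get a factorization $P=T\circ P_\perp$ with $T:(\ker P)^\perp\to Y$ a linear isomorphism. Consequently $P(B_\infty^m)=T\bigl(P_\perp(B_\infty^m)\bigr)$, and it suffices to show that $P_\perp(B_\infty^m)$ is linearly equivalent to $\sum_{i=1}^m I(z_i)$.

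Finally, I would express $P_\perp$ coordinatewise in the orthonormal basis $\tilde q_1,\dots,\tilde q_d$ of $(\ker P)^\perp$: for $x\in\ell_\infty^m$, the coordinate vector of $P_\perp(x)$ is $\bigl(\langle x,\tilde q_1\rangle,\dots,\langle x,\tilde q_d\rangle\bigr)=\tilde Q^T x\in\mathbb{R}^d$. The linear map $x\mapsto \tilde Q^T x$ carries $B_\infty^m=\sum_{i=1}^m I(e_i)$ to $\sum_{i=1}^m I(\tilde Q^T e_i)=\sum_{i=1}^m I(z_i)$, because $\tilde Q^T e_i$ is exactly the $i$-th column of $\tilde Q^T$. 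Composing with $T$ and with the coordinate isomorphism $(\ker P)^\perp\cong\mathbb{R}^d$ produces the required linear equivalence of $P(B_\infty^m)$ with $\sum_{i=1}^m I(z_i)$.

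There is no real obstacle here; the only point that has to be handled with a little care is that $P$ itself need not be the orthogonal projection onto $Y$, so one cannot directly read the generators off $\tilde Q^T$ applied to $Y$-coordinates of $P(e_i)$. Routing through $P_\perp$ circumvents this, since $P$ differs from $P_\perp$ only by the fixed linear isomorphism $T$, which is absorbed into the linear equivalence.
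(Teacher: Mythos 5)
Your proof is correct and follows essentially the same route as the paper: you reduce $P(B_\infty^m)$ to the image of the orthogonal projection with kernel $\ker P$ (the paper cites the fact that projections with the same kernel have linearly equivalent images, which your factorization $P=T\circ P_\perp$ makes explicit) and then read off the generators $z_i=\tilde Q^T e_i$, exactly as the paper does via the matrix $\tilde Q\tilde Q^T$ of the orthogonal projection.
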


\begin{proof}{Proof} It is enough to observe that:
\medskip

\noindent(i) Images of $B_\infty^m$ under
two linear projections with the
same kernel are linearly equivalent. Hence, $P(B_\infty^m)$ is
linearly equivalent to the image of the orthogonal projection
with the kernel $\ker P$.
\medskip

\noindent(ii) The matrix $\tilde Q\tilde Q^T$
is the matrix of the orthogonal projection with the kernel
$\ker P$.
\end{proof}

By Lemma \ref{L:shape} we may replace $\tilde A$ by
\begin{equation}\label{E:Z}
Z=\sum_{i=1}^mI(z_i)
\end{equation}
in the estimate (a) of Lemma \ref{L:APPROX}.
\medskip

Let $M=\binom md$. We denote by $u_i$ $(i=1,\dots,M)$ the $d\times d$
minors of $[y_1,\dots,y_d]$ (ordered in some way). We denote by
$w_i$ $(i=1,\dots,M)$ the $d\times d$
minors of $[\tilde q_1,\dots,\tilde q_d]$ ordered in the same
way as the $u_i$. We denote by
$v_i$ $(i=1,\dots,{\binom m{m-d}}=M)$ their complementary
$(m-d)\times(m-d)$
minors of $[q_1,\dots,q_{m-d}]$. Using the word {\it complementary}
we mean that all minors are considered as minors of the matrix
$[\tilde q_1,\dots,\tilde q_d,q_1,\dots,q_{m-d}]$,
see \cite[p.~76]{aitken}.
\medskip

By the Laplacian expansion (see
\cite[p.~78]{aitken})
$$
\det[y_1,\dots,y_d,q_1,\dots,q_{m-d}]=
\sum_{i=1}^M\theta_iu_iv_i
$$
and
\begin{equation}\label{E:laplace}
\det[\tilde q_1,\dots,\tilde q_d,q_1,\dots,q_{m-d}]=
\sum_{i=1}^M\theta_iw_iv_i
\end{equation}
for proper signs $\theta_i$.
\medskip

Since the matrix $[\tilde q_1,\dots,\tilde q_d,q_1,\dots,q_{m-d}]$
is orthogonal, we have
\begin{equation}\label{E:pm1}
\det[\tilde q_1,\dots,\tilde q_d,q_1,\dots,q_{m-d}]=\pm1.
\end{equation}

We need the following result on compound matrices.
(We refer to \cite[Chapter V]{aitken} for necessary definitions and
background.)
\medskip

{\it A compound matrix of an orthogonal
matrix is orthogonal} (see \cite[Example 4 on p.~94]{aitken}).
\medskip

This result implies, in particular, that the Euclidean norms of
the vectors $\{w_i\}_{i=1}^M$ and $\{v_i\}_{i=1}^M$ in
$\mathbb{R}^M$ are equal to $1$.
\medskip

From (\ref{E:laplace}) and (\ref{E:pm1}) we get that
either
\smallskip

(a) $w_i=\theta_iv_i$ for every $i$
\smallskip

\noindent or
\smallskip

(b) $w_i=-\theta_iv_i$ for every $i$.
\medskip

Without loss of generality, we assume that $w_i=\theta_iv_i$
for all $i$ (we replace $q_1$ by $-q_1$ if it
is not the case).
\medskip

We compute the volume of $\tilde A$ and $B$
with the normalization that
comes from the Euclidean structure introduced above. It is
well known (see \cite[p.~318]{jfa}) and is easy to verify that
with this normalization
$$\vol\tilde A=\frac{2^d}{\left|\sum_{i=1}^M\theta_iu_iv_i\right|}
\sum_{i=1}^M|v_i|$$
and
$$\vol B=\frac{2^d}{\max_i|u_i|}$$
for each MVSE $B$ for $Y$.

\begin{remark}{Remark} After the publication of \cite{jfa}
I learned that the formula for the volume of a zonotope used in
\cite{jfa} can be found in \cite[Appendix, Section VI]{blaschke}.
\end{remark}

Since $\vol\tilde A\le\vol A$, the inequality (\ref{E:lemma1})
implies that
\begin{equation}\label{E:min2}
\max_i|u_i|\sum_{i=1}^M|v_i|\le
(1+\varepsilon)^d
\left|\sum_{i=1}^M\theta_iu_iv_i\right|.
\end{equation}

By (a) the inequality (\ref{E:min2}) can be rewritten as
\begin{equation}\label{E:min3}
\max_i|u_i|\sum_{i=1}^M|w_i|\le
(1+\varepsilon)^d
\left|\sum_{i=1}^Mu_iw_i\right|.
\end{equation}

We need the following two observations:
\medskip

\noindent(i)~ $2^d\sum_{i=1}^M|w_i|$ is the volume of $Z$ in
$\mathbb{R}^d$.
\medskip

\noindent(ii) The vector $\{u_i\}_{i=1}^M$ is what is called the
{\it Grassmann coordinates}, or the {\it Pl\"ucker coordinates} of
the subspace $Y\subset\mathbb{R}^m$, see \cite[Chapter VII]{HP} and
\cite[p.~42]{sh}. Recall that $Y$ is spanned by the columns of the
matrix $[y_1,\dots,y_d]$. It is easy to see that if we choose
another basis in $Y$, the Grassman (Pl\"ucker) coordinates will be
multiplied by a constant.
\medskip

We denote by $\mathcal{Z}_\varepsilon$ ($\ep>0$) the set of all
$d$-dimensional zonotopes in $\mathbb{R}^d$ satisfying the
condition (\ref{E:min3}) with an equality. More precisely, we
define $\mathcal{Z}_\varepsilon$ as the set of those
$d$-dimensional zonotopes $Z$ in $\mathbb{R}^d$ for which

\begin{itemize}
\item[(1)] There exists $m\in\mathbb{N}$ and a rank $d$ matrix
$\tilde Q$ of size $m\times d$ such that
 $Z=\sum_{i=1}^mI(z_i)$, where $z_i\in\mathbb{R}^d$, $i=1,\dots,
 m$, are rows of $\tilde Q$.

\item[(2)] There exists a rank $d$ matrix $Y$ of size $m\times d$
such that, if we denote the $d\times d$ minors of $\tilde Q$ by
$\{w_i\}_{i=1}^\infty$, where $M=\binom{m}d$, and the $d\times d$
minors of $Y$, ordered in the same way as the $w_i$, by
$\{u_i\}_{i=1}^\infty$, then
\begin{equation}\label{E:equal}
\max_i|u_i|\sum_{i=1}^M|w_i|=(1+\varepsilon)^d
\left|\sum_{i=1}^Mu_iw_i\right|,
\end{equation}
and there is no $Y$ for which
$$\max_i|u_i|\sum_{i=1}^M|w_i|<(1+\varepsilon)^d
\left|\sum_{i=1}^Mu_iw_i\right|.$$
\end{itemize}

\begin{remark}{Remarks. 1} It is clear that the zonotope property of being in
$\mathcal{Z}_\ep$ is invariant under changes of the system of
coordinates.
\medskip

\noindent{\bf 2.} We do not consider the class $\mathcal{Z}_0$
because, as it was shown in \cite{laa}, this class is contained in
$\mathcal{T}_d$.
\end{remark}

Many objects introduced below depend on $Z$ and $\ep$, although
sometimes we do not reflect this dependence in our notation.

Let $Z\in\mathcal{Z}_\ep$. We shall change the system of
coordinates in $\mathbb{R}^d$ twice. First we introduce in
$\mathbb{R}^d$ a new system of coordinates such that the unit
(Euclidean) ball $B_2^d$ of $\mathbb{R}^d$ is the maximal volume
ellipsoid in $Z$. From now on we consider the vectors $z_i$
introduced in Lemma \ref{L:shape} as vectors in $\mathbb{R}^d$ and
not as $d$-tuples of real numbers.
\medskip

It is easy to see that the support function of $Z$ is
given by
$$h_Z(x)=\sum_{i=1}^m|\langle x, z_i\rangle|.$$

It is more convenient for us to write this formula
in a different way. We consider the set
\begin{equation}\label{E:z_i}
\left\{\frac{z_1}{||z_1||},\dots,\frac{z_m}{||z_m||},-\frac{z_1}{||z_1||},
\dots,-\frac{z_m}{||z_m||}\right\}.
\end{equation}
If the vectors in (\ref{E:z_i}) are pairwise distinct, we let
$\mu$ to be the atomic measure on the unit (Euclidean) sphere $S$
whose atoms are given by
$\mu(z_i/||z_i||)=\mu(-z_i/||z_i||)=||z_i||/2$. It is easy to see
that
\begin{equation}\label{E:support}
h_Z(x)=\int_S|\langle x,z\rangle|d\mu(z).
\end{equation}

The defining formula for $\mu$ should be adjusted in the natural way
if some of the vectors in (\ref{E:z_i}) are equal.
\medskip

Conversely, if $\mu$ is a nonnegative measure on $S$ supported on
a finite set, then (\ref{E:support}) is a support function of some
zonotope (see \cite[Section 3.5]{Sch93} for more information on
this matter).
\medskip

Dealing with subsets of $S$ we use the following terminology and
notation. Let $x_0\in S$, $r>0$. The set $\Delta(x_0,r):=\{x\in
S:~||x-x_0||<r\hbox{ or }||x+x_0||<r\}$, where $||\cdot||$ is the
$\ell_2$-norm, is called a {\it cap}. If $0<r<\sqrt{2}$, then
$\Delta(x_0,r)$ consists of two connected components. In such a case
both $x_0$ and $-x_0$ will be considered as {\it centers} of
$\Delta(x_0,r)$.
\medskip

We are going to show that if $\ep>0$ is small, then the inequality
(\ref{E:min3}) implies that all but a very small part of the
measure $\mu$ is supported on a union of small caps centered at a
set of vectors which are multiples of a set of vectors satisfying
the condition: if we write their coordinates with respect to a
suitably chosen basis, we get a totally unimodular matrix. Having
such a set, it is easy to find $T\in\mt_d$ which is close to $Z$
with respect to the Banach--Mazur distance, see Lemma \ref{L:BM}.
\medskip

For any two numbers $\omega,\delta>0$ we introduce the set
$$\Omega(\omega,\delta):=\{x\in S:~ \mu(\Delta(x,\omega))\ge\delta\}$$
(recall that by $S$ we denote the unit sphere of $\ell_2^d$). In
what follows $c_1(d), c_2(d), \dots$, $C_1(d)$, $C_2(d), \dots$
denote quantities depending on the dimension $d$ only. Since $d$
is fixed throughout our argument, we regard them as constants.
\medskip

First we find conditions on $\omega$ and $\delta$ under which the
set $\Omega(\omega,\delta)$ contains a normalized basis
$\{e_i\}_{i=1}^d$ whose distance to an orthonormal basis can be
estimated in terms of $d$ only.

\begin{lemma}\label{L:bases} There exist $0<c_1(d), C_1(d), C_2(d)<\infty$, such that for
$\omega\le\frac1{6d}$ and $\delta\le c_1(d)\omega^{d-1}$ there is
a normalized basis $\{e_i\}_{i=1}^d$ in the space $\mathbb{R}^d$
satisfying the conditions:
\smallskip

\noindent{\bf (a)} $\mu(\Delta(e_i,\omega))\ge\delta$.
\smallskip

\noindent{\bf (b)} If $\{o_i\}_{i=1}^d$ is an orthonormal basis in
$\mathbb{R}^d$, then the operator $N:\mathbb{R}^d\to\mathbb{R}^d$
given by $No_i=e_i$ satisfies $||N||\le C_1(d)$ and $||N^{-1}||\le
C_2(d)$, where the norms are the operator norms of $N, N^{-1}$
considered as operators from $\ell_2^d$ into $\ell_2^d$.
\end{lemma}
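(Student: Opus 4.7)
The plan is to exploit the fact that $B_2^d$ is the John (maximal-volume) ellipsoid of $Z$. By John's theorem for centrally symmetric bodies, $B_2^d \subset Z \subset \sqrt{d}\,B_2^d$, which via (\ref{E:support}) translates into $1 \le h_Z(x) \le \sqrt{d}$ for every $x \in S$. Averaging $h_Z$ against the normalized uniform measure $\sigma$ on $S$ and using that $\int_S |\langle x,z\rangle|\,d\sigma(x)$ equals one and the same constant $c_d > 0$ for all unit $z$, I obtain $c_d^{-1} \le \mu(S) \le \sqrt{d}/c_d =: C(d)$. These pointwise and total-mass bounds drive the whole argument.

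Next I would show that $\mu$ loses only a small amount of mass outside $\Omega(\omega,\delta)$. A maximal $(\omega/2)$-separated subset $\{y_j\} \subset S \setminus \Omega(\omega,\delta)$ has cardinality at most $C_0(d)\,\omega^{-(d-1)}$ by a standard volume estimate on $S$, and the caps $\Delta(y_j,\omega)$ cover $S \setminus \Omega(\omega,\delta)$; since each such cap has $\mu$-mass less than $\delta$ by the definition of $\Omega$, this gives $\mu(S\setminus\Omega(\omega,\delta)) \le C_0(d)\,\delta/\omega^{d-1}$. Choosing $c_1(d)$ small enough that $C_0(d)\,c_1(d) < \min\{1/2,\,1/(2c_d)\}$ ensures, under the hypothesis $\delta \le c_1(d)\,\omega^{d-1}$, that the exceptional mass stays below $1/2$ while $\mu(\Omega(\omega,\delta)) > 0$.

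With these tools the basis is built inductively, starting from any $e_1 \in \Omega(\omega,\delta)$ (nonempty by the above). Suppose at stage $i \ge 1$ that $e_1,\dots,e_i \in \Omega(\omega,\delta)$ have been chosen; write $V_i = \spa\{e_1,\dots,e_i\}$ and set $\alpha(d) := 1/(2C(d))$. I claim there is $e_{i+1}\in\Omega(\omega,\delta)$ with $\dist(e_{i+1},V_i) \ge \alpha(d)$: otherwise every $z \in \Omega(\omega,\delta)$ lies within $\alpha(d)$ of $V_i$, and for any unit $x \in V_i^\perp$ (available since $i<d$) we get $|\langle x,z\rangle| \le \dist(z,V_i) < \alpha(d)$ on $\Omega$ and $|\langle x,z\rangle|\le 1$ off $\Omega$, so $h_Z(x) < \alpha(d)\,C(d) + 1/2 = 1$, contradicting $h_Z(x) \ge 1$. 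After $d-1$ such steps I obtain unit vectors $e_1,\dots,e_d$ with $|\det[e_1,\dots,e_d]| \ge \alpha(d)^{d-1}$; the bounds $\|N\| \le \sqrt{d}$ and $\|N^{-1}\| \le d^{(d-1)/2}/\alpha(d)^{d-1}$ then follow from $\|e_i\|=1$ and from the singular-value inequality $\sigma_{\min}(N) \ge |\det N|/\|N\|^{d-1}$.

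The main obstacle is really the bookkeeping: fixing the single constant $c_1(d)$ at the outset so that both the exceptional-mass bound and the non-emptiness of $\Omega(\omega,\delta)$ hold simultaneously, and pinning the separation constant $\alpha(d)$ in a way that does not depend on $\omega$ or $\delta$. The hypothesis $\omega \le 1/(6d)$ does not enter these quantitative estimates; I expect it to be needed only to guarantee the benign Euclidean behavior of small caps (each of the two components of $\Delta(x_0,\omega)$ behaving essentially as a Euclidean ball in $\mathbb{R}^{d-1}$) underlying the covering constant $C_0(d)$.
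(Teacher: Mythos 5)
Your proof is correct, and its skeleton coincides with the paper's: greedily choose normalized vectors $e_{i+1}$ quantitatively far from $\lin\{e_1,\dots,e_i\}$, the existence of such a vector being forced by evaluating $h_Z$ at a unit vector orthogonal to that span and playing the lower bound $h_Z\ge 1$ (from $B_2^d\subset Z$) against an upper bound on the total mass $\mu(S)$; then $\|N\|\le\sqrt d$ from normalization and $\|N^{-1}\|$ from quantitative linear independence. The implementations of the key estimates are genuinely different, though. For the total mass the paper uses the F.~John decomposition to dominate $Z$ by a sum of segments, giving $\mu(S)\le d$, while you use the inclusion $Z\subset\sqrt d\,B_2^d$ plus spherical averaging; both are fine. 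More substantively, the paper produces each $e_i$ by an averaging argument over the Haar measure, so the achievable cap mass $\tfrac12 c_2(d)\omega^{d-1}$ is what determines $c_1(d)$, and this is exactly where $\omega\le\frac1{6d}$ is used; you instead pick the $e_i$ directly from $\Omega(\omega,\delta)$, so condition (a) is automatic, at the price of first bounding $\mu(S\setminus\Omega(\omega,\delta))$ by a separated-set covering argument and tuning $c_1(d)$ so that this exceptional mass stays below the relevant thresholds. That complement bound is essentially the paper's Lemma \ref{L:complement} (proved there by averaging and stated for the enlarged set), so your route makes that auxiliary estimate do double duty and makes the hypothesis $\omega\le\frac1{6d}$ essentially superfluous, affecting only harmless constants. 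Finally, for $\|N^{-1}\|$ the paper writes $N$ as a triangular matrix in the Gram--Schmidt basis and uses a Neumann-type expansion, whereas you use $|\det[e_1,\dots,e_d]|\ge\alpha(d)^{d-1}$ together with $\sigma_{\min}(N)\ge|\det N|/\|N\|^{d-1}$, which is cleaner; since only dimension-dependent constants are required, both yield the lemma.
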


\begin{proof}{Proof} We need an estimate for $\mu(S)$. Observe
that if $K_1$ and $K_2$ are two symmetric zonotopes and
$K_1\subset K_2$, then $\mu_1(S)\le\mu_2(S)$ for the corresponding
measures $\mu_1$ and $\mu_2$ (defined as even measures satisfying
(\ref{E:support}) with $Z=K_1$ and $Z=K_2$, respectively). To
prove this statement we integrate the equality (\ref{E:support})
with respect to $x$ over the Haar measure on $S$.
\medskip

Now we use the assumption that $B_2^d$ is the maximal volume
ellipsoid in $Z$. Let $\sum_{i=1}^n\gamma_ix_i\otimes x_i$ be the
F.~John representation of the identity operator corresponding to
$Z$ (see \cite[p.~46]{JL01}). Then
$$Z\subset\left\{x:~ |\langle x, x_i\rangle|\le 1~ \forall
i\in\{1,\dots,n\}\right\}.
$$
Since $\displaystyle{x=\sum_{i=1}^n\langle x, x_i\rangle
\gamma_ix_i}$ for each $x\in\mathbb{R}^d$, we have
$\displaystyle{Z\subset\sum_{i=1}^n[-\gamma_ix_i,\gamma_ix_i]}$.
Since $\displaystyle{\sum_{i=1}^n\gamma_i=d}$, this implies
$\mu(S)\le d$.
\medskip

Using the well-known computation, which goes back to B.~Gr\"unbaum
(\cite[p.~462, (5.2)]{Gru60}, see, also, \cite[pp.~94--95]{Jam87})
one can find estimates for $\mu(S)$ from below, which imply
$\mu(S)\ge\sqrt{d}$. For our purposes the trivial estimate
$\mu(S)\ge 1$ is sufficient (this estimate follows immediately
from $Z\supset B_2^d$, because this inclusion implies $h_Z(x)\ge
||x||$).
\medskip

We denote the normalized Haar measure on $S$ by $\eta$. It is well
known that there exists $c_2(d)>0$ such that
\begin{equation}\label{E:c_2}
\eta(\Delta(x,r))\ge c_2(d)r^{d-1}~\forall r\in(0,1)~\forall x\in
S.
\end{equation}
Using a standard averaging argument and $\mu(S)\ge 1$, we get that
there exists $e_1\in S$ such that
$$\mu(\Delta(e_1,\omega))\ge c_2(d)\omega^{d-1}.$$
\medskip

Consider the closed
$\displaystyle{\left(\frac1{3d}+\omega\right)}$-neighborhood (in
the $\ell_2^d$ metric) of the line $L_1$ spanned by $e_1$. Let
$\Delta_1$ be the intersection of this neighborhood with $S$. Our
purpose is to estimate $\mu(S\backslash \Delta_1)$ from below. Let
$x\in S$ be orthogonal to $e_1$. Then
$$1\le h_Z(x)\le 1\cdot\mu(S\backslash \Delta_1)+\left(\frac1{3d}+\omega\right)\cdot d,$$
where the left-hand side inequality follows from the fact that $Z$
contains $B_2^d$. Therefore $\mu(S\backslash \Delta_1)\ge
1-\displaystyle\left(\frac1{3d}+\omega\right)d$.
\medskip

We erase all measure $\mu$ contained in $\Delta_1$, use a standard
averaging argument again, and find a vector $e_2$ such that
$$\mu(\Delta(e_2,\omega)\backslash \Delta_1)\ge
c_2(d)\omega^{d-1}\left(1-\left(\frac1{3d}+\omega\right)d\right).$$
Since $\mu(\Delta(e_2,\omega)\backslash \Delta_1)>0$, the vector
$e_2$ is not in the $\frac1{3d}$-neighborhood of $L_1$.
\medskip

Let $\Delta_2$ be the intersection of $S$ with the closed
$\displaystyle{\left(\frac1{3d}+\omega\right)}$-neighborhood of
$L_2=\lin\{e_1,e_2\}$ (that is, $L_2$ is the linear span of
$\{e_1,e_2\}$). Let $x\in S$ be orthogonal to $L_2$. Then
$$1\le h_Z(x)\le 1\cdot\mu(S\backslash \Delta_2)+\left(\frac1{3d}+\omega\right)\cdot d,$$
where the left-hand side inequality follows from the fact that $Z$
contains $B_2^d$. Therefore $\mu(S\backslash \Delta_2)\ge
1-\displaystyle\left(\frac1{3d}+\omega\right)d$.
\medskip

Using the standard averaging argument in the same way as in the
previous step we find a vector $e_3$ such that
$$\mu(\Delta(e_3,\omega)\backslash \Delta_2)\ge
c_2(d)\omega^{d-1}\left(1-\left(\frac1{3d}+\omega\right)d\right).$$
Since $\mu(\Delta(e_3,\omega)\backslash \Delta_2)>0$, the vector
$e_3$ is not in the $\frac1{3d}$-neighborhood of $L_2$.
\medskip

We continue in an obvious way. As a result we construct a
normalized basis $\{e_1,\dots,e_d\}$ satisfying the conditions

\begin{itemize}
\item[{\bf (i)}] $\displaystyle{\mu(\Delta(e_i,\omega))\ge
c_2(d)\omega^{d-1}\left(1-\left(\frac1{3d}+\omega\right)d\right)}$.
\smallskip

\item[{\bf (ii)}]
$\displaystyle{\hbox{dist}(e_i,\hbox{lin}\{e_j\}_{j=1}^{i-1})\ge
\frac1{3d}}$, $i=2,\dots,d$, where $\hbox{dist}(\cdot,\cdot)$
denotes the distance from a vector to a subspace.
\end{itemize}

If $\omega<\frac1{6d}$, the inequality {\bf (i)} implies
$$\mu(\Delta(e_i,\omega))\ge\frac12
c_2(d)\omega^{d-1},$$ and we get the estimate {\bf (a)} of Lemma
\ref{L:bases} with $c_1(d)=c_2(d)/2$.

To estimate $||N||$ and $||N^{-1}||$, we let $\{o_i\}_{i=1}^d$ be
the basis obtained from $\{e_i\}$ using the Gram--Schmidt
orthonormalization process. Let $N:\mathbb{R}^d\to\mathbb{R}^d$ be
defined by $No_i=e_i$. The estimate $||N||\le C_1(d)$ with
$C_1(d)=\sqrt{d}$ follows because the vectors $\{e_i\}_{i=1}^d$
are normalized and the vectors $\{o_i\}_{i=1}^d$ form an
orthonormal set.
\medskip

To estimate $||N^{-1}||$ we observe that the matrix of $N$ with
respect to the basis $\{o_i\}$ is of the form
$$N=\left(\begin{array}{cccc} N_{11} & N_{12} &\dots & N_{1d}\\
0 & N_{22} & \dots & N_{2d}\\
\vdots &\vdots &\ddots &\vdots\\
0 & 0 & \dots & N_{dd}\end{array}\right),$$ and that the
inequality {\bf (ii)} implies $N_{ii}\ge \frac1{3d}$. We have
$$T=\left(\begin{array}{cccc} N_{11} & 0 &\dots & 0\\
0 & N_{22} & \dots & 0\\
\vdots &\vdots &\ddots &\vdots\\
0 & 0 & \dots & N_{dd}\end{array}\right)\cdot
\left(\begin{array}{cccc} 1 & \frac{N_{12}}{N_{11}} &\dots & \frac{N_{1d}}{N_{11}}\\
0 & 1 & \dots & \frac{N_{2d}}{N_{22}}\\
\vdots &\vdots &\ddots &\vdots\\
0 & 0 & \dots & 1\end{array}\right)=D(I+U),$$ where $I$ is the
identity matrix,
$$D=\left(\begin{array}{cccc} N_{11} & 0 &\dots & 0\\
0 & N_{22} & \dots & 0\\
\vdots &\vdots &\ddots &\vdots\\
0 & 0 & \dots & N_{dd}\end{array}\right),\hbox{ and } U=
\left(\begin{array}{ccccc} 0 & \frac{N_{12}}{N_{11}} &\dots & \frac{N_{1,{d-1}}}{N_{11}} & \frac{N_{1d}}{N_{11}}\\
0 & 0 & \dots & \frac{N_{2,{d-1}}}{N_{22}} & \frac{N_{2d}}{N_{22}}\\
\vdots &\vdots &\ddots &\vdots &\vdots\\
0 & 0 & \dots & 0 & \frac{N_{d-1,d}}{N_{d-1,d-1}}\\
 0 & 0 & \dots & 0 & 0
\end{array}\right)
$$
Therefore
\begin{equation}\label{E:inverse}
N^{-1}=(I+U)^{-1}D^{-1}=(I-U+U^2-\dots+(-1)^{d-1}U^{d-1})D^{-1},
\end{equation}
the identity $(I+U)^{-1}=(I-U+U^2-\dots+(-1)^{d-1}U^{d-1})$
follows from the obvious equality $U^d=0$. The definition of $U$
and $N_{ii}\ge \frac1{3d}$ imply that columns of $U$ are vectors
with Euclidean norm at most $3d$, hence  $||U||\le 3d^{\frac32}$.
Therefore the identity (\ref{E:inverse}) implies the following
estimate for $||N^{-1}||$:
$$||N^{-1}||\le \frac{||U||^d-1}{||U||-1}\cdot||D^{-1}||\le
\frac{3^dd^{\frac{3d}2}-1}{3d^{\frac32}-1}\cdot3d.
$$
Denoting the right-hand side of this inequality by $C_2(d)$ we get
the desired estimate.
\end{proof}

\begin{remark}{Remark} We do not need sharp estimates for $c_1(d),
C_1(d)$, and $C_2(d)$ because $d$ is fixed in our argument, and
the dependence on $d$ of the parameters involved in our estimates
is not essential for our proofs.
\end{remark}

We use the following notation: for a set $\Gamma\subset S$ and a
real number $r>0$ we denote the set $\{x\in S:~
\inf\{||x-y||:~y\in\Gamma\}\le r\}$ by $\Gamma_r$.

\begin{lemma}\label{L:complement} Let $c_2(d)$ be the constant from {\rm (\ref{E:c_2})}, then
$\mu(S\backslash ((\Omega(\omega,\delta))_\omega))\le
\displaystyle{\frac{\delta}{c_2(d)\omega^{d-1}}}$.
\end{lemma}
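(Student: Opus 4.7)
The plan is to use a Fubini-type double integration, estimating the quantity
$$\int_E \eta(\Delta(y,\omega))\,d\mu(y), \qquad E:=S\setminus(\Omega(\omega,\delta))_\omega,$$
simultaneously from below by (\ref{E:c_2}) and from above using the defining inequality for $\Omega(\omega,\delta)$.

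First I would observe that any $x\in E$ satisfies $x\notin\Omega(\omega,\delta)$, and hence $\mu(\Delta(x,\omega))<\delta$. The pointwise lower bound (\ref{E:c_2}) immediately gives
$$\int_E \eta(\Delta(y,\omega))\,d\mu(y)\ge c_2(d)\omega^{d-1}\mu(E),$$
so it suffices to prove that the same integral is bounded above by $\delta$.

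For the upper bound I would apply Fubini, exploiting the symmetric relation $y\in\Delta(x,\omega)\iff x\in\Delta(y,\omega)$, which yields
$$\int_E \eta(\Delta(y,\omega))\,d\mu(y)=\int_S \mu(E\cap\Delta(x,\omega))\,d\eta(x).$$
The crucial observation is that $\Delta(x,\omega)\cap E=\emptyset$ whenever $x\in\Omega(\omega,\delta)$: if $y\in\Delta(x,\omega)$, then $\|y-x\|<\omega$ or $\|y+x\|<\omega$, and since $\Omega(\omega,\delta)$ is symmetric this forces $\dist(y,\Omega(\omega,\delta))<\omega$, contradicting $y\in E$. Consequently the integrand on the right vanishes on $\Omega(\omega,\delta)$, while on its complement it is dominated by $\mu(\Delta(x,\omega))<\delta$. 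Since $\eta$ is a probability measure, the integral is at most $\delta$, and comparing the two bounds yields $c_2(d)\omega^{d-1}\mu(E)\le\delta$, which is the claim.

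There is no serious obstacle here: the whole argument hinges on spotting the symmetric interchange $y\in\Delta(x,\omega)\iff x\in\Delta(y,\omega)$ and noticing that symmetry of $\Omega(\omega,\delta)$ prevents its own $\omega$-caps from ever meeting $E$. A naive packing/covering approach would lose an unavoidable factor of $2^{d-1}$ when passing from $\omega$-separated nets to half-caps, whereas the Fubini route delivers exactly the constant asserted in the lemma.
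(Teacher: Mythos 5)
Your proof is correct, and it is essentially the paper's argument: the ``standard averaging argument'' invoked there is exactly your Fubini exchange $\int_E \eta(\Delta(y,\omega))\,d\mu(y)=\int_S \mu(E\cap\Delta(x,\omega))\,d\eta(x)$, and the geometric point that a cap centered at a point of $\Omega(\omega,\delta)$ cannot meet $S\setminus(\Omega(\omega,\delta))_\omega$ (via symmetry of $\Omega$) is the same fact the paper uses to derive its contradiction. You merely run the estimate directly instead of contrapositively, which is a cosmetic difference.
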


\begin{proof}{Proof} Assume the contrary, that is, $\mu(S\backslash ((\Omega(\omega,\delta))_\omega))>
\frac{\delta}{c_2(d)\omega^{d-1}}$. Then, using a standard
averaging argument as in Lemma \ref{L:bases}, we find a point $x$
such that
$$\mu(\Delta(x,\omega)\backslash ((\Omega(\omega,\delta))_\omega))\ge
c_2(d)\omega^{d-1}\cdot\frac{\delta}{c_2(d)\omega^{d-1}}=\delta.$$
By the definition of $\Omega(\omega,\delta)$ this implies $x\in
\Omega(\omega,\delta)$. On the other hand, since the set
$\Delta(x,\omega)\backslash ((\Omega(\omega,\delta))_\omega)$ is
non-empty, it follows that $x\notin \Omega(\omega,\delta)$. We get
a contradiction. \end{proof}

\subsection{Notation and definitions used in the rest of the
proof}\label{S:notation}

For each $Z\in\mathcal{Z}_\ep$ we apply Lemma \ref{L:bases} with
$\omega=\omega(\ep)=\ep^{4k}$ and $\delta=\delta(\ep)=\ep^{4dk}$,
where $0<k<1$ is a number satisfying the conditions
\begin{equation}
k<\frac{1}{6+4d^2}~\hbox{ and }~k<\frac{1}{2d+4d^2},
\end{equation}
we choose and fix such number $k$ for the rest of the proof. It is
clear that there is $\Xi_0=\Xi_0(d,k)>0$ such that the conditions
$\omega(\ep)\le\frac1{6d}$ and $\delta(\ep)\le
c_1(d)(\omega(\ep))^{d-1}$  are satisfied for all $\ep\in(0,\Xi_0)$,
where $c_1(d)$ is the constant from Lemma \ref{L:bases}. In the rest
of the argument we consider $\ep\in(0,\Xi_0)$ only. Let
$\{e_i\}_{i=1}^d$ be one of the bases satisfying the conditions of
Lemma \ref{L:bases} with the described choice of $\omega$ and
$\delta$. Now we change the system of coordinates in
$\mathbb{R}^d\supset Z$ the second time. The new system of
coordinates is such that $\{e_i\}_{i=1}^d$ is its unit vector basis.
We shall modify the objects introduced so far ($\Omega,\mu$, etc.)
and denote their versions corresponding to the new system of
coordinates by $\co, \check\mu$, etc. All these objects depend on
$Z$, $\ep$, and the choice of $\{e_i\}_{i=1}^d$.
\medskip

We denote by $\check S$ the Euclidean unit sphere in the new
system of coordinates. We denote by $\mn:S\to\check S$ the natural
normalization mapping, that is, $\mn(z)=z/||z||$, where $||z||$ is
the Euclidean norm of $z$ with respect to the new system of
coordinates. The estimates for $||N||$ and $||N^{-1}||$ from Lemma
\ref{L:bases} imply that the Lipschitz constants of the mapping
$\mn$ and its inverse $\mn^{-1}:\check S\to S$ can be estimated in
terms of $d$ only.

We introduce a measure $\cm$ on $\check S$ as an atomic measure
supported on a finite set and such that $\cm(\mn(z))=\mu(z)||z||$
for each $z\in S$ , where $||z||$ is the norm of $z$ in the new
system of coordinates. Using the definition of the zonotope $Z$ it
is easy to check that the function
$$\check h_Z(x)=\int_{\check S}|\langle x,\check
z\rangle|d\check\mu(\check z),$$ where $\langle\cdot,\cdot\rangle$
is the inner product in the new coordinate system, is the support
function of $Z$ in the new system of coordinates.
\medskip

We define $\co=\co(\omega,\delta)$ as
$\mn(\Omega(\omega,\delta))$. It is clear that $e_i\in\co$.
Everywhere below we mean coordinates in the new system of
coordinates (when we refer to $||\cdot||$, $\Delta$, etc).
\medskip

The observation that $\mn$ and $\mn^{-1}$ are Lipschitz, with
Lipschitz constants estimated in terms of $d$ only, implies the
following statements:

\begin{itemize}

\item There exist  $C_3(d), C_4(d)<\infty$ such that
\begin{equation}\label{E:Obs1} \cm(\check
S\backslash((\co(\omega,\delta))_{C_3(d)\omega(\ep)}))\le
C_4(d)\frac{\delta}{\omega^{d-1}}
\end{equation}
(we use Lemma \ref{L:complement}).

\item There exist $c_3(d)>0$ and $C_5(d)<\infty$ such that
\begin{equation}\label{E:Obs2}\cm(\Delta(x, C_5(d)\omega))\ge c_3(d)\delta~~\forall
x\in\co(\omega,\delta)\end{equation} (we use the definitions of
$\Omega(\omega,\delta)$ and $\co(\omega,\delta)$).

\item There exists a constant $C_6(d)$ depending on $d$ only, such
that
\begin{equation}\label{E:vol}
\vol(Z)\le C_6(d).\end{equation}

\end{itemize}

Let $\check Q$ be the transpose of the matrix whose columns are
the coordinates of $z_i$ in the new system of coordinates. We
denote by $\check w_i$ $(i=1,\dots,M)$ the $d\times d$ minors of
$\check Q$ ordered in the same way as the $w_i$. The vector
$\{\check w_i\}_{i=1}^M$ is a scalar multiple of
$\{w_i\}_{i=1}^M$. Therefore (\ref{E:equal}) implies
\begin{equation}\label{E:1+ep}
\max_i|u_i|\sum_{i=1}^M|\check w_i|=(1+\varepsilon)^d
\left|\sum_{i=1}^Mu_i\check w_i\right|.
\end{equation}
The volume of $Z$ in the new system of coordinates is
$2^d\sum_{i=1}^M|\check w_i|$.

\subsection{Lemma on six large minors}

To show that if $\ep>0$ is small, then the inequality (\ref{E:1+ep})
implies that all but a very small part of the measure $\cm$ is
supported ``around'' multiples of vectors represented by a totally
unimodular matrix in some basis, we need the following lemma. It
shows that the inequality (\ref{E:1+ep}) implies that the measure
$\cm$ cannot have non-trivial ``masses'' near $(d+2)$-tuples of
vectors satisfying certain condition.

\begin{lemma}\label{L:six} Let $\chi(\ep)$, $\sigma(\ep)$, and $\pi(\ep)$ be functions
satisfying the following conditions:

\begin{itemize}
\item[{\bf (1)}] $\displaystyle{\lim_{\ep\downarrow 0}\chi(\ep)=\lim_{\ep\downarrow
0}\sigma(\ep)=\lim_{\ep\downarrow 0}\pi(\ep)=0}$;

\item[{\bf (2)}] $\ep=o((\chi(\ep))^2(\sigma(\ep))^d)\hbox{
as }\ep\downarrow 0$;

\item[{\bf (3)}] $\pi(\ep)=o(\chi(\ep)) \hbox{ as }\ep\downarrow
0$;

\item[{\bf (4)}] There is a subset $\Phi_0\subset (0,\Xi_0)$ such
that the closure of $\Phi_0$ contains $0$, and for each
$\ep\in\Phi_0$ there exist $Z\in \mathcal{Z}_\ep$ and points
$x_1,\dots,x_{d-2},p_1,p_2,p_3,p_4$ in the corresponding $\check
S$, such that
\begin{equation}\label{E:measure}
\cm(\Delta(z,\pi(\ep)))\ge\sigma(\ep)~~\forall
z\in\{x_1,\dots,x_{d-2},p_1,p_2,p_3,p_4\}.
\end{equation}
\end{itemize}

Let $\mathcal{U}_0$ be the set of pairs $(\ep,Z)$ in which
$\ep\in\Phi_0$ and $Z$ satisfies the condition from {\bf (4)}.
\smallskip

Let $\Phi_1\subset\Phi_0$ be the set of those $\ep\in\Phi_0$ for
which there exists $(\ep,Z)\in\mathcal{U}_0$ such that the
corresponding points $x_1,\dots,x_{d-2},p_1,p_2,p_3,p_4$ satisfy
the condition
\begin{equation}\label{E:detge} |\det(H_{\alpha,\beta})|\ge\chi(\ep)\end{equation}
for all matrices $H_{\alpha,\beta}$ whose columns are the
coordinates of $\{x_1,\dots,x_{d-2},p_\alpha,p_\beta\}$,
$\alpha,\beta\in\{1,2,3,4\}$, $\alpha\ne\beta$, with respect to an
orthonormal basis $\{e_i\}_{i=1}^d$ in $\mathbb{R}^d$. Then there
exists $\Xi_1>0$ such that  $\Phi_1\cap(0,\Xi_1)=\emptyset$.
\end{lemma}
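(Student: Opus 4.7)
The plan is to derive a contradiction from the classical three-term Plücker (Grassmann) identity. Assuming toward a contradiction that there is a sequence $\ep\in\Phi_1$ tending to $0$ with witnesses $(Z,x_1,\dots,x_{d-2},p_1,\dots,p_4)$ satisfying (\ref{E:measure}) and (\ref{E:detge}), I would first exploit the extremality (\ref{E:1+ep}) to show that most of the $|\check w_i|$-mass is concentrated on row-$d$-subsets whose Plücker coordinate $u_i$ has nearly maximal modulus $U:=\max_j|u_j|$. Then I would use a coupled averaging argument to locate a single tuple of rows for which all six Plücker coordinates indexed by the pairs $\{\alpha,\beta\}\subset\{1,2,3,4\}$ are simultaneously nearly maximal, and finally read off a numerical impossibility from the identity.

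\medskip

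Concretely, the triangle inequality $|\sum_i u_i\check w_i|\le\sum_i|u_i||\check w_i|\le U\sum_i|\check w_i|$ combined with (\ref{E:1+ep}) and the volume bound (\ref{E:vol}) yields the weighted defect bound
\[\sum_i (U-|u_i|)|\check w_i|\;\le\;\bigl((1+\ep)^d-1\bigr)U\sum_i|\check w_i|\;=\;O(\ep)\,U.\]
For each $k$ put $I_k:=\{i:\mn(\check z_i)\in\Delta(x_k,\pi(\ep))\}$ and for each $\alpha$ put $J_\alpha:=\{j:\mn(\check z_j)\in\Delta(p_\alpha,\pi(\ep))\}$; by (\ref{E:measure}) each carries at least $\sigma(\ep)$ of the $\|\check z\|$-mass. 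For a tuple $\ell=(i_1,\dots,i_{d-2},j_1,\dots,j_4)$ with $i_k\in I_k$, $j_\alpha\in J_\alpha$, and a pair $\{\alpha,\beta\}\subset\{1,2,3,4\}$, multilinearity of the determinant together with condition (3) give
\[|\check w_{S_{\alpha\beta}(\ell)}|\;=\;|\det H_{\alpha,\beta}|\cdot\Bigl(\prod_k\|\check z_{i_k}\|\Bigr)\|\check z_{j_\alpha}\|\,\|\check z_{j_\beta}\|\cdot(1+o(1)),\]
where $S_{\alpha\beta}(\ell):=\{i_1,\dots,i_{d-2},j_\alpha,j_\beta\}$. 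Placing the product probability $q(\ell)\propto\prod_k\|\check z_{i_k}\|\prod_\alpha\|\check z_{j_\alpha}\|$ (normalized by $\prod_k\cm(I_k)\prod_\alpha\cm(J_\alpha)$) on the tuples and marginalizing the two $j$'s outside a given pair transforms the defect bound into
\[\mathbb{E}_q\bigl[\,U-|u_{S_{\alpha\beta}(\ell)}|\,\bigr]\;\le\;\frac{O(\ep)\,U}{|\det H_{\alpha,\beta}|\,\sigma(\ep)^d}\;\le\;\frac{O(\ep)\,U}{\chi(\ep)\,\sigma(\ep)^d}.\]
Fixing an absolute constant $\tau_0\in(0,1-1/\sqrt 2)$, Markov and a union bound across the six pairs give $\mathbb{P}_q[\exists\{\alpha,\beta\}:|u_{S_{\alpha\beta}(\ell)}|<(1-\tau_0)U]=O(\ep/(\tau_0\chi\sigma^d))$, which is $o(1)$ by condition (2); hence some $\ell^*$ realizes $|u_{S_{\alpha\beta}(\ell^*)}|\in[(1-\tau_0)U,\,U]$ simultaneously for all six pairs.

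\medskip

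Fixing such $\ell^*$ and abbreviating $u_{\alpha\beta}:=u_{S_{\alpha\beta}(\ell^*)}$, the six $d$-subsets share the common $(d-2)$-subset $\{i_1^*,\dots,i_{d-2}^*\}$, so the Plücker coordinates of $Y$ satisfy the three-term Grassmann identity
\[u_{12}u_{34}-u_{13}u_{24}+u_{14}u_{23}=0.\]
Writing $A=u_{12}u_{34}$, $B=u_{13}u_{24}$, $C=u_{14}u_{23}$, each $|A|,|B|,|C|\in[(1-\tau_0)^2U^2,\,U^2]$. An elementary case check shows that $A-B+C=0$ with such magnitudes forces either $\sign A=\sign C$ (so that $|B|=|A|+|C|\ge 2(1-\tau_0)^2U^2$, requiring $2(1-\tau_0)^2\le 1$) or $\sign A\ne\sign C$ (so that $|B|=\bigl||A|-|C|\bigr|\le(2\tau_0-\tau_0^2)U^2$, requiring $(1-\tau_0)^2\le 2\tau_0-\tau_0^2$); both force $\tau_0\ge 1-1/\sqrt 2$, contradicting the choice of $\tau_0$. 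This yields the required $\Xi_1>0$. The main obstacle is the second step: six defect bounds, each most naturally defined on its own bundle of caps, have to be merged into a single existence statement, which is why the whole argument is run on one product probability space coupling all $d+2$ row-index coordinates; the factor $\chi^2$ in condition (2) provides just enough headroom to absorb simultaneously the Markov/union-bound losses and the multiplicative minor-approximation error $1+O(\pi/\chi)$.
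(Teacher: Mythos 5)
Your proposal is correct, and it rests on the same three pillars as the paper's own proof: the extremal equality (\ref{E:1+ep}) combined with the volume bound (\ref{E:vol}), the lower bound $|\check w_{S}|\ge(\chi(\ep)-d\,\pi(\ep))\prod\|\check z\|$ for minors built from one row per cap (Lemma \ref{L:detperturb} together with condition {\bf (3)}), and the three-term Grassmann--Pl\"ucker relation (\ref{E:P}) exploited through a purely magnitude-based impossibility, exactly as in (\ref{E:sqrt2}). Where you genuinely differ is in how the averaging is organized. The paper first reduces to the special case {\bf (*)} in which all rows falling into the caps have a common length $\tau$ and a common multiplicity $F$ (by ``cutting'' rows into pieces, an operation preserving (\ref{E:1+ep})), and then contradicts the restricted inequality (\ref{E:not}) via the exact double count $\sum_\Psi\sum_{i\in M(\Psi)}=F^2\sum_{i\in\Lambda}$, applying the Pl\"ucker bound to every $(d+2)$-tuple; you instead keep the rows as they are, weight tuples by the product of their norms, and use Markov plus a union bound over the six pairs to extract one tuple on which all six Pl\"ucker coordinates are $\ge(1-\tau_0)U$ with a fixed $\tau_0<1-1/\sqrt 2$, applying the relation once. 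This avoids the cutting/normalization step, and since your threshold is an absolute constant rather than the paper's $0.04(\chi(\ep)-d\,\pi(\ep))$, your route actually needs only $\ep=o(\chi(\ep)\,(\sigma(\ep))^d)$, i.e.\ slightly less than condition {\bf (2)}. Two small points you should state explicitly: every pair of the $d+2$ centers occurs in some $H_{\alpha,\beta}$, so (\ref{E:detge}) forces $\min(\|z-z'\|,\|z+z'\|)\ge\chi(\ep)$ for each such pair, and with $\pi(\ep)=o(\chi(\ep))$ the caps are pairwise disjoint for small $\ep$ --- this is what guarantees that your tuples consist of $d+2$ distinct rows (so each $S_{\alpha\beta}(\ell)$ is a genuine $d\times d$ minor and the Grassmann relation applies) and that the passage from tuples to $d$-subsets in the marginalization is injective; also $U=\max_j|u_j|>0$ because the matrix $Y$ has rank $d$.
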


\begin{proof}{Proof} We assume the contrary, that is, we assume that
$0$ belongs to the closure of $\Phi_1$. For each $\ep\in\Phi_1$ we
choose $Z\in\mathcal{Z}_\ep$ such that $(\ep,Z)\in\mathcal{U}_0$
and the condition (\ref{E:measure}) is satisfied. We show that for
sufficiently small $\ep>0$ this leads to a contradiction.
\medskip

We consider the following perturbation of the matrix
$H_{\alpha,\beta}$: each column vector $z$ in it is replaced by a
vector from $\Delta(z,\pi(\ep))$. We denote the obtained
perturbation of the matrix $H_{\alpha,\beta}$ by
$H_{\alpha,\beta}^p$. We claim that
\begin{equation}\label{E:Hp}
|\det(H_{\alpha,\beta}^p)|\ge\chi(\ep)-d\cdot\pi(\ep).\end{equation}

To prove this claim we need the following lemma, which we state in
a bit more general form than is needed now, because we shall need
it later.

\begin{lemma}\label{L:detperturb} Let $x_1,\dots, x_d,z\in \ell_2^d$ be such that
$\displaystyle{\max_{2\le i\le d}}||x_i||\le\mathfrak{m}$ and
$||z-x_1||\le\mathfrak{l}$. Then
$$\left|\det[z,x_2,\dots,x_d]-\det[x_1,x_2,\dots,x_d]\right|\le\mathfrak{l}\cdot\mathfrak{m}^{d-1}.$$
\end{lemma}

This lemma follows immediately from the volumetric interpretation
of determinants.
\medskip

To get the inequality (\ref{E:Hp}) we apply Lemma
\ref{L:detperturb} $d$ times with $\mathfrak{m}=1$ and
$\mathfrak{l}=\pi(\ep)$.
\medskip

Since $Z\in\mathcal{Z}_\ep$, it can be represented in the form
$Z=\sum_iI(z_i)$. First we complete our proof in a special case
when the following condition is satisfied:
\smallskip

\noindent{\bf (*)} {\it All vectors $z_i$ whose normalizations
$z_i/||z_i||$ belong to the sets $\Delta(z, \pi(\ep))$, $z\in
\{x_1,\dots$, $x_{d-2},p_1,p_2,p_3, p_4\}$, have the same norm
$\tau$ and there are equal amounts of such vectors in each of the
sets $\Delta(z, \pi(\ep))$, $z\in \{x_1,\dots$,
$x_{d-2},p_1,p_2,p_3, p_4\}$, we denote the common value of the
amounts by $F$.}
\smallskip

The inequality (\ref{E:measure}) implies
$$F\cdot\tau\ge \sigma(\ep)$$

We denote by $\Lambda$ the set of all numbers $i\in\{1,\dots, M\}$
satisfying the condition: the normalizations of columns of the
minor $\check w_i$ form a matrix of the form $H_{\alpha,\beta}^p$,
for some $\alpha,\beta\in\{1,2,3,4\}$.
\medskip

We need an estimate for $\displaystyle{\sum_{i\in\Lambda}|\check
w_i|}$. The inequality (\ref{E:Hp}) implies $|\check
w_i|\ge\tau^d(\chi(\ep)-d\cdot\pi(\ep))$ for each $i\in\Lambda$.
\medskip

On the other hand, the cardinality $|\Lambda|$ of $\Lambda$ is
$6F^d$. In fact, there are $F^{d-2}$ ways to choose $z_i/||z_i||$
in the sets $\Delta(x_j,\pi(\ep))$, $j=1,\dots, d-2$. There are
$\binom{4}2=6$ ways to choose two of the sets
$\Delta(p_j,\pi(\ep))$, $j=1,2,3,4$, and there are $F^2$ ways to
choose one vector $z_i/||z_i||$ in each of them. Therefore
$|\Lambda|=6F^d$ and
\begin{equation}\label{E:2.16}
\sum_{i\in\Lambda}|\check w_i|\ge
6F^d\tau^d(\chi(\ep)-d\cdot\pi(\ep))\ge6(\sigma(\ep))^d(\chi(\ep)-d\cdot\pi(\ep)).
\end{equation}

We assume for simplicity that $\max_i|u_i|=1$ (if it is not the
case, some of the sums below should be multiplied by
$\max_i|u_i|$). The $u_i$ are defined above the equality
(\ref{E:equal}). Then the condition (\ref{E:1+ep}) can be
rewritten as
\begin{equation}\label{E:2.17}
(1+\varepsilon)^d\left|\sum_{i=1}^Mu_i\check w_i\right|\ge
\sum_{i\in\Lambda}|\check w_i|+\sum_{i\notin\Lambda}|\check w_i|.
\end{equation}

On the other hand,
\begin{equation}\label{E:2.18}
(1+\varepsilon)^d\left|\sum_{i=1}^Mu_i\check w_i\right|\le
(1+\varepsilon)^d\left|\sum_{i\in\Lambda}u_i\check w_i\right|+
(1+\varepsilon)^d\sum_{i\notin\Lambda}|\check w_i|.
\end{equation}

From (\ref{E:2.17}) and (\ref{E:2.18}) we get
\begin{equation}\label{E:2.19}
(1+\varepsilon)^d \left|\sum_{i\in\Lambda}u_i\check w_i\right|\ge
\sum_{i\in\Lambda}|\check w_i|- \left((1+\ep)^d-1\right)
\sum_{i\notin\Lambda}|\check w_i|.
\end{equation}

As is well known, $2^d\sum_{i=1}^M|\check w_i|$ is the volume of
$Z$, hence $\sum_{i=1}^M|\check w_i|\le 2^{-d}C_6(d)$.
\medskip

Using this observation and the inequalities (\ref{E:2.16}) and
(\ref{E:2.19}) we get
$$
\left|\sum_{i\in\Lambda}u_i\check w_i\right|\ge \left(
\frac1{(1+\varepsilon)^d}-
\frac{((1+\varepsilon)^d-1)C_6(d)2^{-d}}{6(\sigma(\ep))^d(\chi(\ep)-d\cdot\pi(\ep))}
\right) \sum_{i\in\Lambda}|\check w_i|.
$$
(We use the fact that $\chi(\ep)-d\cdot\pi(\ep)>0$ if $\ep>0$ is
small enough.) The conditions {\bf (2)} and {\bf (3)} imply that
there exists $\psi>0$ such that
\begin{equation}\label{E:0.04}
\left( \frac1{(1+\varepsilon)^d}-
\frac{((1+\varepsilon)^d-1)C_6(d)2^{-d}}{6(\sigma(\ep))^d(\chi(\ep)-d\cdot\pi(\ep))}
\right)>(1-0.04(\chi(\ep)-d\cdot\pi(\ep)))
\end{equation}
is satisfied if $\varepsilon\in(0,\psi)$. The right-hand side is
chosen in the form needed below.
\medskip

Let $\psi>0$ be such that the statement above is true. Then for
$\varepsilon\in(0,\psi)$ we have
\begin{equation}\label{E:not}
\left|\sum_{i\in\Lambda}u_i\check w_i\right|\ge
(1-0.04(\chi(\ep)-d\cdot\pi(\ep))) \sum_{i\in\Lambda}|\check w_i|.
\end{equation}

Recall that $u_i$ are $d\times d$ minors of some matrix
$[y_1,\dots,y_d]$. We need the Pl\"ucker relations, see
\cite[p.~312]{HP} or \cite[p.~42]{sh}. The result that we need can
be stated in the following way: if $\gamma_1,\dots,\gamma_{d-2},
\kappa_1, \kappa_2, \kappa_3, \kappa_4$ are indices of $d+2$ rows
of $[y_1,\dots,y_d]$, then
\begin{equation}\label{E:P}
t_{1,2}t_{3,4}-t_{1,4}t_{3,2}+t_{2,4}t_{3,1}=0,
\end{equation}
where $t_{\alpha,\beta}$ is the determinant of the $d\times d$
matrix whose rows are the rows of $[y_1,\dots,y_d]$ with the
indices $\gamma_1,\dots,\gamma_{d-2}, \kappa_\alpha$, and
$\kappa_\beta$. Note that (\ref{E:P}) can be verified by a
straightforward computation (which is very simple if we make a
suitable change of coordinates before the computation).
\medskip

Now we show that (\ref{E:not}) cannot be satisfied. Let $\Psi$ be
a set consisting of $(d+2)$ vectors $z_{\kappa_1}, z_{\kappa_2},
z_{\kappa_3}, z_{\kappa_4}, z_{\gamma_1}, \dots,
z_{\gamma_{d-2}}$, formed in the following way. We choose vectors
$(z_{\kappa_i}/||z_{\kappa_i}||)\in \Delta(p_i,\pi(\ep))$,
$i=1,2,3,4$, and choose vectors
$(z_{\gamma_i}/||z_{\gamma_i}||)\in \Delta(x_i,\pi(\ep))$,
$i=1,\dots,d-2$. To each such selection there corresponds a set of
$6$ minors $\check w_i$ of the form
$\tau^d\det(H_{\alpha,\beta}^p)$, we denote this set of six minors
by $\{\check w_i\}_{i\in M(\Psi)}$.
\medskip

One of the immediate consequences of the Pl\"ucker relation
(\ref{E:P}) is that for any such $(d+2)$-tuple $\Psi$
\begin{equation}\label{E:sqrt2}
|u_i|\le\frac1{\sqrt{2}}~\hbox{ for some }~i\in M(\Psi).
\end{equation}
(Here we use the assumption that $\max_i|u_i|=1$.)
\medskip

For each $\Psi$ we choose one such $i\in M(\Psi)$ and denote it by
$s(\Psi)$. The estimate (\ref{E:Hp}) and the condition {\bf (*)}
imply that
\begin{equation}\label{E:rho}
\tau^d\ge |\check w_i|\ge\tau^d(\chi(\ep)-d\cdot\pi(\ep))
\end{equation}
for every $i\in\Lambda$.
\medskip

Hence for every $(d+2)$-tuple $\Psi$ of the described type we have
\begin{align*}\begin{split}\left|\sum_{i\in M(\Psi)}u_i\check w_i\right|&\le \sum_{i\in
M(\Psi)\backslash\{s(\Psi)\}}|\check w_i|+ \frac1{\sqrt{2}}|\check
w_{s(\Psi)}|\le \sum_{i\in M(\Psi)}|\check
w_i|-\frac{\sqrt{2}-1}{\sqrt{2}} |\check w_{s(\Psi)}|\\
&=\sum_{i\in M(\Psi)}|\check w_i|\left(1-
\frac{(\sqrt{2}-1)|\check
w_{s(\Psi)}|}{\sqrt{2} \sum_{i\in M(\Psi)}|\check w_i|}\right)\\
&\le \sum_{i\in M(\Psi)}|\check w_i| \left(1-
\frac{(\sqrt{2}-1)\tau^d(\chi(\ep)-d\cdot\pi(\ep))}{\sqrt{2}
\cdot6\tau^d}\right)\\
&< \sum_{i\in M(\Psi)}|\check
w_i|\left(1-0.04(\chi(\ep)-d\cdot\pi(\ep))\right).\end{split}\end{align*}
Thus
\begin{equation}\label{E:quad}
\left|\sum_{i\in M(\Psi)}u_i\check w_i\right| < \sum_{i\in
M(\Psi)}|\check w_i|\left(1-0.04(\chi(\ep)-d\cdot\pi(\ep))\right).
\end{equation}

Recall that $F$ is the number of vectors $z_i$ corresponding to
each of the sets $\Delta(z, \pi(\ep))$, $z\in
\{x_1,\dots,x_{d-2},p_1,p_2,p_3, p_4\}$. Simple counting shows
that for an arbitrary collection $\{\Upsilon_i\}_{i\in\Lambda}$ of
numbers we have
$$
\sum_\Psi\sum_{i\in M(\Psi)}\Upsilon_i=F^2
\sum_{i\in\Lambda}\Upsilon_i.
$$

Using (\ref{E:quad}) we get that
\begin{align*}\begin{split}
F^2\left|\sum_{i\in\Lambda}u_i\check w_i\right|&=
\left|\sum_\Psi\sum_{i\in M(\Psi)}u_i\check w_i\right|\le
\sum_\Psi\left|\sum_{i\in M(\Psi)}u_i\check w_i\right|\\
&<\sum_\Psi\sum_{i\in M(\Psi)}|\check w_i|
(1-0.04(\chi(\ep)-d\cdot\pi(\ep)))\\
&= F^2\sum_{i\in\Lambda}|\check w_i|
(1-0.04(\chi(\ep)-d\cdot\pi(\ep))).
\end{split}\end{align*}

If $\varepsilon\in(0,\psi)$, we get a contradiction with
(\ref{E:not}).
\medskip

To see that the general case can be reduced to the case {\bf (*)}
we need the following observation:
\medskip

Let $\tau_1, \tau_2>0$ be such that $\tau_1+\tau_2=1$. We replace
the row with the coordinates of $z_j$ in $\check Q$ by two rows,
one of them is the row of coordinates of $\tau_1 z_j$ and the
other is the row of coordinates of $\tau_2 z_j$. The zonotope
generated by the rows of the obtained matrix coincides with $Z$.
In the matrix $[y_1,\dots, y_d]$ we replace the $j^{\rm th}$ row
by two copies of it. It is easy to see that if we replace the
sequences $\{u_i\}_{i=1}^M$ and $\{\check w_i\}_{i=1}^M$ by
sequences of $d\times d$ minors of these new matrices, the
condition (\ref{E:1+ep}) is still satisfied.
\medskip

We can repeat this `cutting' of vectors $z_j$ into `pieces' with
(\ref{E:1+ep}) still being valid.
\medskip

Therefore, we may assume the following: among $z_j$ corresponding
to each of the sets $\Delta(z, \pi(\ep))$, $z\in
\{x_1,\dots,x_{d-2},p_1,p_2,p_3, p_4\}$ there exists a subset
$\Phi(z, \pi(\ep))$ consisting of vectors having the same length
$\tau$, and such that the sum of norms of vectors from $\Phi(z,
\pi(\ep))$ is $\ge\displaystyle{\frac{\sigma(\varepsilon)}2}$,
moreover, we may assume that the numbers of such vectors in the
subsets $\Phi(z,\pi(\ep))$ are the same for all $z\in
\{x_1,\dots,x_{d-2},p_1,p_2,p_3, p_4\}$.
\medskip

Lemma \ref{L:six} in this case can be proved using the same
argument as before, but with $\Lambda$ being the set of those
minors $\check w_i$ for which rows are from $\Phi(z,\pi(\ep))$.
Everything starting with the inequality (\ref{E:2.16}) can be
shown in the same way as before; only some constants will be
changed (because we need to replace $\sigma(\varepsilon)$ by
$\frac{\sigma(\varepsilon)}2$).
\end{proof}

\subsection{Searching for a totally unimodular matrix}

Let $\rho(\ep)=\ep^k$, $\nu(\ep)=\ep^{3k}$. For a vector $s$ we
denote its coordinates with respect to $\{e_i\}_{i=1}^d$ by
$\{s_i\}_{i=1}^d$. (Here $k$ and $\{e_i\}_{i=1}^d$ are the same as
in Section \ref{S:notation}.)

\begin{lemma}\label{L:smallangles} If
\begin{equation}\label{E:k1}
k<\frac{1}{6+4d^2},
\end{equation}
then there exists $\Xi_2>0$ such that for $\ep\in(0,\Xi_2)$,
$s,t\in\co(\omega(\ep), \delta(\ep))$, and
$\alpha,\beta\in\{1,\dots,d\}$, the inequality
\begin{equation}\label{E:aboverho}
\min\{|s_\alpha|, |s_\beta|, |t_\alpha|, |t_\beta|\}\ge\rho(\ep),
\end{equation}
implies
\begin{equation}\label{E:largedet}
\left|\det\left(\begin{array}{cc} s_\alpha &
t_\alpha\\
s_\beta & t_\beta\end{array}\right)\right|<\nu(\ep).
\end{equation}
\end{lemma}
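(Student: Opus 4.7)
The plan is to obtain the lemma by contradiction via Lemma \ref{L:six}. Suppose the statement fails, so there exist arbitrarily small $\ep>0$ together with $Z\in\mathcal{Z}_\ep$, points $s,t\in\co(\omega(\ep),\delta(\ep))$, and indices $\alpha,\beta$ such that (\ref{E:aboverho}) holds while (\ref{E:largedet}) fails, i.e.\ $|s_\alpha t_\beta-s_\beta t_\alpha|\ge\nu(\ep)$. By passing to a subsequence I may fix $\alpha,\beta$ and let $\Phi_0$ be the resulting set of $\ep$'s, so $0$ lies in its closure.

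For each such $\ep$ I select the $d+2$ test points of Lemma \ref{L:six} as follows: take $\{x_1,\dots,x_{d-2}\}=\{e_j:j\ne\alpha,\beta\}$ and $p_1=e_\alpha,\ p_2=e_\beta,\ p_3=s,\ p_4=t$. All of these lie in $\co(\omega(\ep),\delta(\ep))$: the $e_j$ by Lemma \ref{L:bases}(a) together with the definition $\co=\mn(\Omega)$, and $s,t$ by assumption. Since the current coordinate system (the second one, from Section \ref{S:notation}) has $\{e_i\}_{i=1}^d$ as its orthonormal standard basis, I compute every determinant $\det H_{\alpha',\beta'}$ in this basis. Cofactor expansion along the $d-2$ rows indexed by $j\ne\alpha,\beta$ (along which the $d-2$ basis-vector columns each contribute exactly one $1$) reduces every such $d\times d$ determinant, up to sign, to the corresponding $2\times 2$ minor in rows $\alpha,\beta$ of the matrix
\[
\begin{pmatrix} 1 & 0 & s_\alpha & t_\alpha \\ 0 & 1 & s_\beta & t_\beta \end{pmatrix}.
\]
Hence the six values $|\det H_{\alpha',\beta'}|$ are $1,\ |s_\alpha|,\ |s_\beta|,\ |t_\alpha|,\ |t_\beta|,$ and $|s_\alpha t_\beta-s_\beta t_\alpha|$. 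By (\ref{E:aboverho}) and the assumed failure of (\ref{E:largedet}), each of these is at least $\nu(\ep)=\ep^{3k}$, so I set $\chi(\ep):=\ep^{3k}$.

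To supply the measure lower bounds, I invoke (\ref{E:Obs2}) with $\pi(\ep):=C_5(d)\omega(\ep)=C_5(d)\ep^{4k}$ and $\sigma(\ep):=c_3(d)\delta(\ep)=c_3(d)\ep^{4dk}$; this yields $\cm(\Delta(z,\pi(\ep)))\ge\sigma(\ep)$ for each of the $d+2$ test points. Conditions (1) and (3) of Lemma \ref{L:six} are immediate since $k>0$ and $\pi/\chi\sim\ep^k\to0$. Condition (2) requires $\ep=o\bigl((\chi(\ep))^2(\sigma(\ep))^d\bigr)=o(\ep^{(6+4d^2)k})$, i.e.\ $(6+4d^2)k<1$, which is precisely (\ref{E:k1}). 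Lemma \ref{L:six} therefore applies and produces $\Xi_1>0$ with $\Phi_1\cap(0,\Xi_1)=\emptyset$; but by construction $\Phi_0\subset\Phi_1$, so $\Phi_0$ cannot meet $(0,\Xi_1)$, contradicting $0\in\overline{\Phi_0}$. Setting $\Xi_2:=\Xi_1$ completes the argument.

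The only real design decision is the choice of the $d+2$ test points; everything else is routine. Using the basis vectors $e_j$ for $j\ne\alpha,\beta$ as the $x_i$'s (which is legitimate because $e_i\in\co$) collapses all six $d\times d$ minors to $2\times 2$ minors of a fixed matrix built from the coordinates of $s$ and $t$, and this is exactly the setting in which (\ref{E:aboverho}) and the contradiction hypothesis combine cleanly to give a uniform lower bound of order $\ep^{3k}$. The arithmetic of exponents then matches perfectly with the inequality $k<1/(6+4d^2)$ that was pre-arranged in Section \ref{S:notation}, which is why this particular form of (\ref{E:k1}) was imposed there.
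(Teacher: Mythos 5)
Your proposal is correct and follows essentially the same route as the paper: a contradiction argument applying Lemma \ref{L:six} with $\{x_1,\dots,x_{d-2}\}=\{e_j\}_{j\ne\alpha,\beta}$, $\{p_1,p_2,p_3,p_4\}=\{e_\alpha,e_\beta,s,t\}$, $\chi(\ep)=\ep^{3k}$, $\pi(\ep)=C_5(d)\omega(\ep)$, $\sigma(\ep)=c_3(d)\delta(\ep)$, with condition {\bf (2)} reducing exactly to $(6+4d^2)k<1$. You merely make explicit the determinant reduction to $2\times 2$ minors that the paper calls a straightforward computation (and add an unnecessary but harmless passage to a subsequence fixing $\alpha,\beta$).
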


\begin{proof}{Proof} Assume the contrary, that is, there exists a subset $\Phi_2\subset (0,1)$, having $0$ in its
closure and such that for each $\ep\in\Phi_2$ there exist
$Z\in{\cal Z}_\ep$, $s,t\in\co(\omega(\ep), \delta(\ep))$ and
$\alpha,\beta$ satisfying the condition (\ref{E:aboverho}), and
such that
$$\left|\det\left(\begin{array}{cc} s_\alpha &
t_\alpha\\
s_\beta & t_\beta\end{array}\right)\right|\ge\nu(\ep).$$  We apply
Lemma \ref{L:six} with $\{x_1,\dots,x_{d-2}\}=
\{e_i\}_{i\ne\alpha,\beta}$,
$\{p_1,p_2,p_3,p_4\}=\{e_\alpha,e_\beta, s, t\}$. Using a
straightforward determinant computation we see that the condition
(\ref{E:detge}) is satisfied with $\chi(\ep)=\min\{1,\rho(\ep),
\nu(\ep)\}=\ep^{3k}$ (we consider $\ep<1$).
\medskip

The inequality (\ref{E:Obs2}) implies that the condition {\bf (4)}
of Lemma \ref{L:six} is satisfied with
$\pi(\ep)=C_5(d)\omega(\ep)=C_5(d)\ep^{4k}$ and
$\sigma(\ep)=c_3(d)\delta(\ep)=c_3(d)\ep^{4dk}$. It is clear that
the conditions {\bf (2)} and {\bf (3)} of Lemma \ref{L:six} are
satisfied. To get {\bf (2)} we use the condition (\ref{E:k1}).
Applying Lemma \ref{L:six}, we get the existence of the desired
$\Xi_2$.
\end{proof}

For each vector from $\co(\omega(\ep), \delta(\ep))$ we define its
{\it top set} as the set of indices of coordinates whose absolute
values $\ge\rho(\ep)$.
\medskip

The collection of all possible top sets is a subset of the set of
all subsets of $\{1,\dots,d\}$, hence its cardinality is at most
$2^d$. We create a collection $\Theta(\omega(\ep),
\delta(\ep))\subset\co(\omega(\ep), \delta(\ep))$ in the following
way: for each subset of $\{1,\dots,d\}$ which is a top set for at
least one vector from $\co(\omega(\ep), \delta(\ep))$, we choose
one of such vectors; the set $\Theta(\omega(\ep), \delta(\ep))$ is
the set of all vectors selected in this way.
\medskip

In our next lemma we show that each vector from $\co(\omega(\ep),
\delta(\ep))$ can be reasonably well approximated by a vector from
$\Theta(\omega(\ep), \delta(\ep))$. Therefore (as we shall see
later), to prove Lemma \ref{L:APPROX} it is sufficient to find a
``totally unimodular'' set approximating $\Theta(\omega(\ep),
\delta(\ep))$.

\begin{lemma}\label{L:smalldistance} Let $\rho(\ep)$ and $\nu(\ep)$ be as above and let $k$ and $\Xi_2$ be numbers
satisfying the conditions of Lemma {\rm \ref{L:smallangles}}. Let
$\ep\in(0,\Xi_2)$, $Z\in{\cal Z}_\ep$, and let
$s,t\in\co(\omega(\ep),\delta(\ep))$ be two vectors with the same
top set $\Sigma$. Then
\begin{equation}\label{E:topset}
\min\{||t+s||,
||t-s||\}\le\sqrt{2\frac{\nu(\ep)}{(\rho(\ep))^2}+4d\rho(\ep)^2}.
\end{equation}
\end{lemma}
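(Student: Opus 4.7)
The plan is to apply Lemma \ref{L:smallangles} to every pair of indices in the common top set $\Sigma$ and then convert the resulting collection of small $2\times 2$ minor estimates into a single bound on $|s\cdot t|$ via Lagrange's identity; the bound on $\min\{\|t+s\|, \|t-s\|\}$ then follows from the identity $\min\{\|t+s\|^2, \|t-s\|^2\} = 2 - 2|s\cdot t|$.

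Concretely, for $\alpha,\beta\in\Sigma$ the definition of a top set gives $\min\{|s_\alpha|, |s_\beta|, |t_\alpha|, |t_\beta|\}\ge\rho(\ep)$, so Lemma \ref{L:smallangles} applies and yields $|s_\alpha t_\beta - s_\beta t_\alpha| < \nu(\ep)$. Letting $\tilde s, \tilde t \in \mathbb{R}^{|\Sigma|}$ denote the restrictions of $s$ and $t$ to the $\Sigma$-coordinates, Lagrange's identity would then give
$$\|\tilde s\|^2\|\tilde t\|^2 - (\tilde s\cdot \tilde t)^2 = \sum_{\alpha<\beta,\ \alpha,\beta\in\Sigma}(s_\alpha t_\beta - s_\beta t_\alpha)^2 \le \binom{d}{2}\nu(\ep)^2.$$
Since $\|s\|=\|t\|=1$ and $|s_i|, |t_i| < \rho(\ep)$ for $i\notin\Sigma$, I get $\|\tilde s\|^2, \|\tilde t\|^2 \ge 1 - d\rho(\ep)^2$, hence $(\tilde s\cdot\tilde t)^2 \ge (1-d\rho(\ep)^2)^2 - \binom{d}{2}\nu(\ep)^2$. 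Because $|s\cdot t - \tilde s\cdot\tilde t| = |\sum_{i\notin\Sigma} s_i t_i| \le d\rho(\ep)^2$, combining these would produce
$$|s\cdot t| \ge \sqrt{(1-d\rho(\ep)^2)^2 - \binom{d}{2}\nu(\ep)^2} - d\rho(\ep)^2.$$

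The final step is the elementary verification of
$$\sqrt{(1-d\rho(\ep)^2)^2 - \binom{d}{2}\nu(\ep)^2} - d\rho(\ep)^2 \ge 1 - \frac{\nu(\ep)}{\rho(\ep)^2} - 2d\rho(\ep)^2,$$
which, after squaring (both sides are nonnegative for small $\ep$), reduces to $2\rho(\ep)^{-2} \ge \nu(\ep)\rho(\ep)^{-4} + 2d + \binom{d}{2}\nu(\ep)$. Substituting $\rho(\ep)=\ep^k$ and $\nu(\ep)=\ep^{3k}$, the left side grows like $2\ep^{-2k}$ while the right side is dominated by $\ep^{-k}$, so the inequality holds for all sufficiently small $\ep$ (shrinking $\Xi_2$ if necessary). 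With this, $\min\{\|t+s\|^2, \|t-s\|^2\} = 2 - 2|s\cdot t| \le 2\nu(\ep)/\rho(\ep)^2 + 4d\rho(\ep)^2$, and taking square roots yields the claim.

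I do not anticipate a serious obstacle; the key structural point is that the hypothesis "$s$ and $t$ have the same top set $\Sigma$" is precisely what allows Lemma \ref{L:smallangles} to control every $2\times 2$ minor of $\bigl(\begin{smallmatrix}\tilde s\\\tilde t\end{smallmatrix}\bigr)$, while the non-top coordinates contribute only an $O(\rho(\ep)^2)$ error to $s\cdot t$. After that, everything is a direct consequence of Lagrange's identity and elementary algebra.
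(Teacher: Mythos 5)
Your proof is correct, but it runs along a genuinely different route than the paper's. The paper never forms the Cauchy--Schwarz defect: it first shows that the signs of $s$ and $t$ must agree (or anti-agree) on all of $\Sigma$, since a disagreement at $\alpha,\beta\in\Sigma$ would make the $2\times 2$ minor equal to $|s_\alpha||t_\beta|+|s_\beta||t_\alpha|\ge 2\rho(\ep)^2>\nu(\ep)$, contradicting Lemma \ref{L:smallangles}; it then fixes an anchor index $\alpha\in\Sigma$ with $|t_\alpha|\ge|s_\alpha|$ and deduces the componentwise comparison $|t_\beta|\ge\bigl(1-\nu(\ep)/\rho(\ep)^2\bigr)|s_\beta|$ for all $\beta\in\Sigma$ from the same minor bound, after which a direct expansion of $||t-s||^2=2-2\langle t,s\rangle$ gives (\ref{E:topset}). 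You instead feed all the $2\times 2$ minors over $\Sigma$ into Lagrange's identity to bound $1-(\tilde s\cdot\tilde t)^2$, and working with $|s\cdot t|$ lets you skip the sign-agreement step entirely; this is a clean and slightly more conceptual mechanism, and all your intermediate estimates (the application of Lemma \ref{L:smallangles} on $\Sigma$, the bounds $\|\tilde s\|^2,\|\tilde t\|^2\ge 1-d\rho(\ep)^2$, the $d\rho(\ep)^2$ error off $\Sigma$, and the final algebraic reduction to $2\rho(\ep)^{-2}\ge\nu(\ep)\rho(\ep)^{-4}+2d+\binom d2\nu(\ep)$) check out. What the paper's argument buys in exchange is that it proves the inequality on the whole interval $(0,\Xi_2)$ with no further restriction: the only degenerate case, $\rho(\ep)>1/\sqrt d$ (where $\Sigma$ could even be empty), is dismissed because the right-hand side of (\ref{E:topset}) then exceeds $2$, and the rest of the computation needs nothing beyond $\ep<1$. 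Your final comparison $\sqrt{(1-d\rho^2)^2-\binom d2\nu^2}-d\rho^2\ge 1-\nu/\rho^2-2d\rho^2$ genuinely can fail for moderate $\ep$ (e.g.\ when $d\rho(\ep)^2$ is close to but below $1$), so the ``shrinking $\Xi_2$ if necessary'' you invoke is really needed; since any smaller threshold still satisfies the conditions of Lemma \ref{L:smallangles} and the lemma is only used as $\ep\downarrow 0$ (in Lemma \ref{L:BM}), this restriction is harmless, but it is a genuine loss of range relative to the paper's proof and should be stated as part of the lemma's hypotheses if you keep your argument.
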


\begin{proof}{Proof} Observe that if
$\rho(\ep)=\ep^k>\displaystyle{\frac{1}{\sqrt{d}}}$, the statement
of the lemma is trivial. Therefore we may assume that
$\rho(\ep)\le\displaystyle{\frac{1}{\sqrt{d}}}$. In such a case
$\Sigma$ contains at least one element.

First we show that the signs of different components of $s$ and
$t$ ``agree'' on $\Sigma$ in the sense that either they are the
same everywhere on $\Sigma$, or they are the opposite everywhere
on $\Sigma$. In fact, assume the contrary, and let $\alpha,
\beta\in\Sigma$ be indices for which the signs ``disagree''. Then,
as is easy to check,
$$\left|\det\left(\begin{array}{cc} s_\alpha &
t_\alpha\\
s_\beta &
t_\beta\end{array}\right)\right|=|s_\alpha||t_\beta|+|s_\beta||t_\alpha|\ge
2(\rho(\ep))^2>\nu(\ep),$$ and we get a contradiction. We consider
the case when the signs of $t_\alpha$ and $s_\alpha$ are the same
for each $\alpha\in\Sigma$, the other case can be treated
similarly (we can just consider $-s$ instead of $s$).
\medskip

We may assume without loss of generality that $|t_\alpha|\ge
|s_\alpha|$ for some $\alpha\in\Sigma$. We show that in this case
\begin{align*}
|t_\beta|\ge\left(1-\frac{\nu(\ep)}{(\rho(\ep))^2}\right)|s_\beta|
\end{align*}
for all $\beta\in\Sigma$. In fact, if
$|t_\beta|<\displaystyle{\left(1-\frac{\nu(\ep)}{(\rho(\ep))^2}\right)}|s_\beta|$
for some $\beta\in\Sigma$, then
$$\nu(\ep)>\left|\det\left(\begin{array}{cc} s_\alpha &
t_\alpha\\
s_\beta & t_\beta\end{array}\right)\right|\ge|t_\alpha|
|s_\beta|-|s_\alpha||t_\beta|\ge|s_\alpha||s_\beta|\frac{\nu(\ep)}{(\rho(\ep))^2}\ge\nu(\ep),
$$
a contradiction.
\medskip

We have
\begin{align*}\begin{split}
||t-s||^2&=||t||^2+||s||^2-2\langle t, s\rangle\le
2-2\sum_{\alpha\in\Sigma}\left(1-\frac{\nu(\ep)}{(\rho(\ep))^2}\right)s_\alpha^2+2\sum_{\alpha\notin\Sigma}\rho(\ep)^2\\
&\le
2\frac{\nu(\ep)}{(\rho(\ep))^2}+4\sum_{\alpha\notin\Sigma}\rho(\ep)^2\le
2\frac{\nu(\ep)}{(\rho(\ep))^2}+4d\rho(\ep)^2.\end{split}
\end{align*} Q.E.D.\end{proof}

Let $\Theta(\omega(\ep),\delta(\ep))=\{\mathfrak{b}_j\}_{j=1}^J$,
where $J\le 2^d$. We may and shall assume that
$\{e_i(\ep)\}_{i=1}^d\subset \Theta(\omega(\ep),\delta(\ep))$ (see
Lemma \ref{L:bases} and Section \ref{S:notation}). We denote
$d\cdot 2^d$ by $\mathfrak{n}$ and introduce $d\cdot\mathfrak{n}$
functions:
$\varphi_1(\ep),\dots,\varphi_{d\cdot\mathfrak{n}}(\ep)$, such
that
\begin{equation}\label{E:phi1}
\varphi_1(\ep)\ge\dots\ge\varphi_{d\cdot
\mathfrak{n}}(\ep)=\rho(\ep)=\ep^k.
\end{equation}

\begin{equation}\label{E:phi2}
\displaystyle{\varphi_{\alpha}(\ep)=(\varphi_{\alpha+1}(\ep))^{\frac1{d+1}}}.
\end{equation}

We consider the matrix $X$ whose columns are
$\{\mathfrak{b}_j\}_{j=1}^J$. We order the absolute values of
entries of this matrix in non-increasing order and denote them by
$\ma_1\ge\ma_2\ge\dots\ge\ma_{d\cdot J}$. Let $j_0$ be the least
index for which
\begin{equation}\label{E:j_0}
\varphi_{d\cdot j_0}(\ep)>\ma_{j_0}. \end{equation} The existence
of $j_0$ follows from $\{e_i(\ep)\}_{i=1}^d\subset
\Theta(\omega(\ep),\delta(\ep))$. The definition of $j_0$ implies
that $\ma_j\ge \varphi_{d\cdot j}(\ep)$ for $j<j_0$, hence
$\ma_j\ge\varphi_{d\cdot(j_0-1)}(\ep)$ for $j\le j_0-1$.
\medskip

We replace all entries of the matrix $X$ except
$\ma_1,\dots,\ma_{j_0-1}$ by zeros and denote the obtained matrix
by $G=(G_{ij})$, $i=1,\dots,d$, $j=1,\dots,J$, and its columns by
$\{g_j\}_{j=1}^{J}$. It is clear that
\begin{equation}\label{E:gx}||g_j-\mathfrak{b}_j||\le d\cdot\varphi_{dj_0}(\ep).
\end{equation}

We form a bipartite graph $\mathcal{G}$ on the vertex set $\{\bar
1, \dots,\bar d\}\cup\{1,\dots,J\}$, where we use bars in $\bar 1,
\dots, \bar d$ because these vertices are considered as different
from the vertices $1,\dots,d$, which are in the set
$\{1,\dots,J\}$. The edges of $\mathcal{G}$ are defined in the
following way: the vertices $\bar i$ and $j$ are adjacent if and
only if $G_{ij}\ne 0$. So there is a one-to-one correspondence
between edges of $\mathcal{G}$ and non-zero entries of $G$. We
choose and fix a maximal forest $\mathcal{F}$ in $\mathcal{G}$.
(We use the standard terminology, see, e. g. \cite[p.~11]{Sch86}.)
\medskip

For each non-zero entry of $G$ we define its {\it level} in the
following way:
\smallskip

The level of entries corresponding to edges of $\mathcal{F}$ is
$1$.
\medskip

For a non-zero entry of $G$ which does not correspond to an edge
in $\mathcal{F}$ we consider the cycle in $\mathcal{G}$ formed by
the corresponding edge and edges of $\mathcal{F}$. We define the
{\it level} of the entry as the half of the length of the cycle
(recall that the graph $\mathcal{G}$ is bipartite, hence all
cycles are even).
\medskip

\noindent{\bf Observation.} One of the classes of the bipartition
has $d$ vertices. Hence no cycle can have more than $2d$ edges,
and the level of each vertex is at most $d$.
\medskip

To each entry $G_{ij}$ of level $f$ we assign a square submatrix
$G(ij)$ of $G$ all other entries in which are of levels at most
$f-1$. We do this in the following way. To entries corresponding
to edges of $\mathcal{F}$ we assign the $1\times 1$ matrices
containing these entries. For an entry $G_{ij}$ which does not
correspond to an edge in $\mathcal{F}$ we consider the
corresponding edge $\mathfrak{e}$ in $\mathcal{G}$ and the cycle
$\mathcal{C}$ formed by $\mathfrak{e}$ and edges of $\mathcal{F}$.
Then we consider the entries in $G$ corresponding to edges of
$\mathcal{C}$ and the minimal submatrix in $G$ containing all of
these entries. Now we consider all edges in $\mathcal{G}$
corresponding to non-zero entries of this submatrix. We choose and
fix in this set of edges a minimum-length cycle $\mathcal{M}$
containing $\mathfrak{e}$. We define $G(ij)$ as the minimal
submatrix of $G$ containing all entries corresponding to edges of
$\mathcal{M}$. It is easy to verify that:
\begin{itemize}
\item $G(ij)$ is a square submatrix of $G$. \item Non-zero entries
of $G(ij)$ are in one-to-one correspondence with entries of
$\mathcal{M}$. \item The expansion of the determinant of $G(ij)$
according to the definition contains exactly two non-zero terms.
\item All non-zero entries of $G(ij)$ except $G_{ij}$ have level
$\le f-1$.
\end{itemize}

\begin{lemma}\label{L:det0} Let $k<1/(2d+4d^2)$.
If $\ep>0$ is small enough, then there exists a $d\times J$ matrix $\tilde G$ such that:
\smallskip

\noindent{\rm (1)} If some entry of $G$ is zero, the corresponding
entry of $\tilde G$ is also zero.
\smallskip

\noindent{\rm (2)} The entries of level $1$ of $\tilde G$ are the
same as for $G$;
\smallskip

\noindent{\rm (3)} All other non-zero entries of $\tilde G$ are
perturbations of entries of $G$ satisfying the following
conditions:

\begin{itemize}

\item[{\rm (a)}] If $G_{ij}$ is of level $f$, then $|G_{ij}-\tilde
G_{ij}|<\varphi_{d\cdot j_0-f+1}(\ep)$.

\item[{\rm (b)}] For each non-zero entry $G_{ij}$ of level $\ge 2$
of $G$ the determinant of the submatrix  $\tilde G(ij)$ of $\tilde
G$ corresponding to $G(ij)$ is zero.
\end{itemize}
\end{lemma}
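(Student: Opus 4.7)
The plan is to construct $\tilde G$ by induction on the level $f$, starting at level $1$ and proceeding up to the maximum possible level (which is $\le d$ by the Observation preceding the lemma). At level $1$ set $\tilde G_{ij}:=G_{ij}$, so (2) and the $f=1$ case of the perturbation estimate are vacuous. For the inductive step at level $f\ge 2$, assume $\tilde G$ has been defined at every non-zero entry of level $\le f-1$ with perturbations satisfying (3)(a) at the corresponding level. For each non-zero level-$f$ entry $G_{ij}$, the construction of the minimum cycle $\mathcal M$ makes $G(ij)$ an $f\times f$ submatrix whose determinant expansion has exactly two non-zero terms, say $\pm G_{ij}P$ and $\pm Q$, where $P$ is the product of the $f-1$ other entries on the matching of $\mathcal M$ containing $(i,j)$ and $Q$ is the product of the $f$ entries on the other matching; all $2f-1$ of these factors are entries of $G$ of level $\le f-1$. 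Define $\tilde G_{ij}$ as the unique value for which $\det\tilde G(ij)=0$, i.e.\ $\tilde G_{ij}:=\pm\tilde Q/\tilde P$ with the sign chosen so that the $f\times f$ determinant vanishes. Properties (1), (2), (3)(b) are then immediate.

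To verify (3)(a), decompose
\[
\tilde G_{ij}-G_{ij}\;=\;\pm(\tilde Q/\tilde P-Q/P)\;\mp\;\det G(ij)/P,
\]
and bound each summand separately. The ``propagation'' summand is controlled because every factor of $P,Q$ has absolute value $\ge\varphi_{d(j_0-1)}(\ep)$ (since $G$ retains only entries above that threshold) and is perturbed by at most $\varphi_{dj_0-f+2}(\ep)=\varphi_{dj_0-f+1}(\ep)^{d+1}$ by the inductive hypothesis; a routine product-rule estimate combined with the fast growth of $\varphi$ encoded by $\varphi_\alpha=\varphi_{\alpha+1}^{1/(d+1)}$ and the hypothesis $k<1/(2d+4d^2)$ pins this summand to at most $\tfrac12\varphi_{dj_0-f+1}(\ep)$ for sufficiently small $\ep$.

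The ``determinantal'' summand $|\det G(ij)|/|P|$ requires the key estimate $|\det G(ij)|\le c(d)\nu(\ep)$, obtained by telescoping Lemma~\ref{L:smallangles} along the $2f$-cycle $\mathcal M$: one successively swaps the adjacent pairs $G_{i_k j_k}G_{i_{k+1}j_{k+1}}\leftrightarrow G_{i_{k+1}j_k}G_{i_k j_{k+1}}$, applying Lemma~\ref{L:smallangles} to the untruncated columns $\mathfrak b_{j_k},\mathfrak b_{j_{k+1}}\in\co$ to bound each swap's contribution by $\nu(\ep)=\ep^{3k}$, with a separate accounting for any truncation error when an entry is zero in $G$ but below threshold in $X$. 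After $f-1\le d-1$ swaps the first matching-product is transformed into a signed multiple of the second, yielding the required bound on $|\det G(ij)|$. Dividing by $|P|\ge\varphi_{d(j_0-1)}(\ep)^{f-1}$ and again using $k<1/(2d+4d^2)$ gives at most $\tfrac12\varphi_{dj_0-f+1}(\ep)$, and summing the two bounds completes the inductive step.

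The main obstacle is precisely this telescoping estimate on $|\det G(ij)|$. Each column of $G(ij)$ has only two non-zero entries (the two cycle-edges at the corresponding column-vertex), so every relevant $2\times 2$ submatrix of $G(ij)$ contains entries that are zero in $G$ but may be nonzero-but-below-threshold in $X$; Lemma~\ref{L:smallangles} must therefore be applied to $X$ rather than to $G$, and the resulting truncation error (of order at most $\varphi_{dj_0}(\ep)$ per swap) has to be carefully balanced against the target perturbation bound. A further subtlety is the sign in $\det G(ij)=\pm P_1\pm Q$: when the two matching-permutations of the $2f$-cycle have equal parity (so $\det G(ij)=P_1+Q$) the telescoping naively controls $|P_1-Q|$ rather than $|P_1+Q|$, and an additional sign-reconciliation along $\mathcal M$ --- built from the coherent sign pattern of the $\mathfrak b_j$'s on their common top-sets, itself a consequence of Lemma~\ref{L:smallangles} --- is required to close the estimate.
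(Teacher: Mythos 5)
Your construction of $\tilde G$ (induction on the level, setting $\tilde G_{ij}$ so that the two-term determinant of $\tilde G(ij)$ vanishes, and controlling the ``propagation'' error through the product of the previously perturbed entries) is the same as the paper's. The gap is in the ``determinantal'' summand: you claim $|\det G(ij)|\le c(d)\nu(\ep)$ by telescoping Lemma \ref{L:smallangles} along the cycle $\mathcal{M}$, and this step does not work. Lemma \ref{L:smallangles} only applies to a pair $s,t\in\co(\omega(\ep),\delta(\ep))$ and a pair of coordinates $\alpha,\beta$ in which \emph{all four} entries have absolute value $\ge\rho(\ep)$ (hypothesis (\ref{E:aboverho})). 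In the cycle configuration with $h\ge 3$, each column of $G(ij)$ has exactly two retained (above-threshold) entries, and two columns that are consecutive on $\mathcal{M}$ share only \emph{one} retained row; the other entry you would need is one that was zeroed in $G$, i.e.\ an entry of $\mathfrak{b}_j$ of absolute value $<\varphi_{dj_0}(\ep)$, which may well be far below $\rho(\ep)$ (it can be exactly $0$). So for every pair of cycle columns the hypothesis of Lemma \ref{L:smallangles} can fail, the lemma then gives no information about any of the $2\times2$ swaps, and the claimed bound $|\det G(ij)|\le c(d)\nu(\ep)$ is not established (nor is it available from anything proved up to this point). The same objection applies to your ``sign-reconciliation'' step: it invokes coherent signs on common top sets (Lemma \ref{L:smalldistance}), but the cycle columns in general have \emph{different} top sets --- that is exactly why the graph $\mathcal{G}$ and the forest $\mathcal{F}$ were introduced.

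What is actually needed, and what the paper does, is a contradiction argument through Lemma \ref{L:six} rather than a direct estimate from the $2\times2$ lemma. Assume the required perturbation bound fails along a sequence $\ep\downarrow 0$; then (\ref{E:dettilde}) gives a lower bound for $|\det G'(ij)|$, and Lemma \ref{L:detperturb} transfers it to the lower bound (\ref{E:detG}) for $|\det G(ij)|$. One then observes that the six $d\times d$ minors in (\ref{E:minors}), formed by the cycle columns of $G$ completed with coordinate columns $e_\ell$, are all at least $\chi(\ep)$ as in (\ref{E:chi}); normalizing these $d+2$ columns, using (\ref{E:gx}) and (\ref{E:Obs2}) to verify the measure condition (\ref{E:measure}) with $\pi(\ep)=2d\varphi_{dj_0}(\ep)+C_5(d)\omega(\ep)$ and $\sigma(\ep)=c_3(d)\delta(\ep)$, and checking conditions {\bf (2)}--{\bf (3)} of Lemma \ref{L:six} via $k<1/(2d+4d^2)$, one contradicts $Z\in\mathcal{Z}_\ep$. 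In other words, the smallness of $\det G(ij)$ is a genuinely $(d+2)$-point consequence of the near-minimality condition (\ref{E:1+ep}) and the Pl\"ucker relations, not something that can be assembled from the pairwise $2\times2$ statement; your argument is missing this invocation of Lemma \ref{L:six}, and without it the inductive step does not close.
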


\begin{proof}{Proof} Let $G_{ij}$ be an entry of level $f$. Since,
as it was observed above, all entries of $G(ij)$ have level $\le
f-1$, we can prove the lemma by induction as follows.

(1) We let $\tilde G_{ij}=G_{ij}$ for all $G_{ij}$ of level one.

(2) Let $f\ge 2$. {\bf Induction hypothesis:} We assume that for
all entries $G_{ij}$ of levels $\ell(G_{ij})$ satisfying
$2\le\ell(G_{ij})\le f-1$ we have found perturbations $\tilde
G_{ij}$ satisfying
$$|G_{ij}-\tilde G_{ij}|\le \varphi_{d\cdot j_0-\ell(G_{ij})+1}(\ep),$$
such that $\det(\tilde G(ij))=0$. (Note that this assumption is
vacuous if $f=2$.)
\medskip

{\bf Inductive step:} Let $G_{ij}$ be an entry of level $f$. If
$\ep>0$ is small enough we can find a number $\tilde G_{ij}$ such
that $|\tilde G_{ij}-G_{ij}|\le\varphi_{d\cdot j_0-f+1}(\ep)$ and
$\det(\tilde G(ij))=0$. Observe that by the induction hypothesis
and the observation that all other entries of $G(ij)$ have levels
$\le f-1$, all other entries of $\tilde G(ij)$ have already been
defined.
\medskip

So let $G_{ij}$ be an entry of level $f$, and $G(ij)$ be the
corresponding square submatrix. Renumbering rows and columns of
the matrix $G$ we may assume that the matrix $G(ij)$ looks like
the one sketched below for some $h\le f$.

$$G(ij)=\left(\begin{array}{ccccc}
a_1 & 0 & \dots & 0 & G_{ij}\\
b_1 & a_2 & \dots & 0 & 0 \\
\vdots & \vdots & \ddots & \vdots & \vdots\\
0 & 0 & \dots & a_{h-1} & 0\\
0 & 0 & \dots & b_{h-1} & a_h
\end{array}\right)$$

Therefore the matrix $G$ (possibly, after renumbering of columns
and rows) has the form
\begin{equation}\label{E:G}
\left(\begin{array}{ccccccccccccc} a_1 & 0 & \dots & 0 & G_{ij} &
0 & 0 & \dots & 0 & 0 & 1 & 0 &
\dots\\
b_1 & a_2 & \dots & 0 & 0 & 0 & 0 & \dots & 0 & 0 & 0 & 1 &
\dots\\
\vdots & \vdots & \ddots & \vdots & \vdots & \vdots & \vdots &
\ddots & \vdots & \vdots &\vdots & \vdots & \ddots \\
0 & 0 & \dots & a_{h-1} & 0 & 0 & 0 & \dots & 0 & 0 & 0 & 0 &
\dots\\
0 & 0 & \dots & b_{h-1} & a_h & 0 & 0 & \dots & 0 & 0 & 0 & 0 &
\dots\\
* & * & \dots & * & * & 1 & 0 & \dots & 0 & 0 & 0 & 0 &
\dots\\
* & * & \dots & * & * & 0 & 1 & \dots & 0 & 0 & 0 & 0 &
\dots\\
\vdots & \vdots & \ddots & \vdots & \vdots & \vdots & \vdots &
\ddots & \vdots & \vdots &\vdots & \vdots & \ddots \\
* & * & \dots & * & * & 0 & 0 & \dots & 1 & 0 & 0 & 0 &
\dots\\
* & * & \dots & * & * & 0 & 0 & \dots & 0 & 1 & 0 & 0 &
\dots
\end{array}\right)\end{equation}

We have assumed that we have already found entries $\{\tilde
a_n\}_{n=1}^h$ and $\{\tilde b_n\}_{n=1}^{h-1}$ of $\tilde G$
which are perturbations of $\{a_n\}_{n=1}^h$ and $\{
b_n\}_{n=1}^{h-1}$. The entries $1$ shown (\ref{E:G}) are the only
non-zero entries in their columns, therefore the corresponding
edges of $\mathcal{G}$ should be in $\mathcal{F}$. Let us denote
the perturbation of $G_{ij}$ we are looking for by $\tilde
G_{ij}$. The condition (b) of Lemma \ref{L:det0} can be written as
\begin{equation}\label{E:tilde}\prod_{n=1}^h\tilde
a_n+(-1)^{h-1}\prod_{n=1}^{h-1}\tilde b_n\cdot \tilde
G_{ij}=0\end{equation}

So it suffices to show that the number $\tilde G_{ij}$, found as a
solution of (\ref{E:tilde}) satisfies $|\tilde
G_{ij}-G_{ij}|<\varphi_{d\cdot j_0-f+1}(\ep)$. To show this we
assume the contrary. Since there are finitely many possibilities
for $j_0$ and $f$, the converse can be described as existence of
$j_0$ and $f$, such that there is a subset $\Phi_3\subset(0,1)$,
whose closure contains $0$, satisfying the condition:

For each $\ep\in\Phi_3$ there is $Z\in\mathcal{Z}_\ep$ such that
after proceeding with all steps of the construction we get: all
the conditions above are satisfied, but
\begin{equation}\label{E:dettilde}
\left|\prod_{n=1}^h\tilde a_n+(-1)^{h-1}\prod_{n=1}^{h-1}\tilde
b_n\cdot G_{ij}\right|>\varphi_{d\cdot
j_0-f+1}(\ep)\prod_{n=1}^{h-1}|\tilde b_n|.
\end{equation}

We need to get from here an estimate for $|\det(G(ij))|$ from
below. To get it we observe that the inequality (\ref{E:dettilde})
is an estimate from below of the determinant of the matrix

$$G'(ij)=\left(\begin{array}{ccccc}
\tilde a_1 & 0 & \dots & 0 & G_{ij}\\
\tilde b_1 & \tilde a_2 & \dots & 0 & 0 \\
\vdots & \vdots & \ddots & \vdots & \vdots\\
0 & 0 & \dots & \tilde a_{h-1} & 0\\
0 & 0 & \dots & \tilde b_{h-1} & \tilde a_h
\end{array}\right).$$

To get from here an estimate for $\det(G(ij))$ from below we
observe the following: The $\ell_2$-norm of each column of
$G_{ij}$ is $\le 1$, the $\ell_2$-distance between a column of
$G_{ij}$ and the corresponding column of $G'(ij)$ is at most
$2\varphi_{dj_0-f+2}(\ep)$. Hence the $\ell_2$-norm of each column
of $G'(ij)$ is $\le 1+2\varphi_{dj_0-f+2}(\ep)$. Applying Lemma
\ref{L:detperturb} $h$ times we get
$$|\det(G(ij))|\ge|\det(G'(ij))|-h\cdot
2\varphi_{dj_0-f+2}(\ep)(1+2\varphi_{dj_0-f+2}(\ep))^{h-1}.
$$

The induction hypothesis implies
$$|\tilde b_i|\ge\varphi_{d(j_0-1)}(\ep)-\varphi_{dj_0-f+2}(\ep),
$$
we get
\begin{equation}\label{E:detG}\begin{split}
|\det(G(ij))|&\ge
\varphi_{dj_0-f+1}(\ep)\cdot(\varphi_{d(j_0-1)}(\ep)-\varphi_{dj_0-f+2}(\ep))^{h-1}\\
&\quad-h\cdot
2\varphi_{dj_0-f+2}(\ep)(1+2\varphi_{dj_0-f+2}(\ep))^{h-1}.\end{split}
\end{equation}

Let us keep the notation $\{g_j\}_{j=1}^J$ for columns of the
matrix (\ref{E:G}). We consider the following six $d\times d$
minors of this matrix: the corresponding submatrices contain the
columns $\{g_2,\dots,g_{h-1},g_{h+1},\dots,g_d\}$, and two out of
the four columns $\{g_1,g_h,g_{d+1}, g_{d+2}\}$. Observe that
$g_{h+1}=e_{h+1},\dots,g_d=e_d,g_{d+1}=e_1, g_{d+2}=e_2$.
\medskip

The absolute values of the minors are equal to
\begin{equation}\label{E:minors}
|\det G(ij)|, ~\left|\prod_{n=2}^h a_n\right|,
~\left|\prod_{n=1}^{h-1} b_n\right|,
~|a_1|\cdot\left|\prod_{n=2}^{h-1} b_n\right|,
~\left|\prod_{n=2}^h b_n\right|, ~\left|\prod_{n=2}^{h-1}
b_n\right|.
\end{equation}

The first number in (\ref{E:minors}) was estimated in
(\ref{E:detG}). All other numbers are at least
$(\varphi_{d(j_0-1)}(\ep))^{h-1}$, it is clear that this number
exceeds the number from (\ref{E:detG}).
\medskip

We are going to use Lemma \ref{L:six} with $\{x_1,\dots,x_{d-2}\}=
\{\mathfrak{N}(g_2),\dots,\mathfrak{N}(g_{h-1}),\mathfrak{N}(g_{h+1}),\dots$,
$\mathfrak{N}(g_d)\}$ and
$\{p_1,p_2,p_3,p_4\}=\{\mathfrak{N}(g_1),\mathfrak{N}(g_h),\mathfrak{N}(g_{d+1}),
\mathfrak{N}(g_{d+2})\}$. (Recall that $\mathfrak{N}(z)=z/||z||$.)
Our definitions imply that $||\mathfrak{b}_j||=1$ and $||g_j||\le
1$, because $g_j$ is obtained from $\mathfrak{b}_j$ by replacing
some of the coordinates by zeros. Hence the inequality
(\ref{E:detG}) and the remark above on the numbers
(\ref{E:minors}) imply that the condition (\ref{E:detge}) is
satisfied with
\begin{equation}\label{E:chi}\begin{split} \chi(\ep)&=
\varphi_{dj_0-f+1}(\ep)\cdot(\varphi_{d(j_0-1)}(\ep)-\varphi_{dj_0-f+2}(\ep))^{h-1}\\
&\quad-h\cdot
2\varphi_{dj_0-f+2}(\ep)(1+2\varphi_{dj_0-f+2}(\ep))^{h-1}.
\end{split}
\end{equation}

The inequality (\ref{E:gx}), the inclusion
$\mathfrak{b}_j\in\co(\omega(\ep),\delta(\ep))$ and (\ref{E:Obs2})
imply that the condition (\ref{E:measure}) is satisfied with
$\pi(\ep)=2d\cdot\varphi_{dj_0}(\ep)+C_5(d)\omega(\ep)$ and
$\sigma(\ep)=c_3(d)\delta(\ep)$. So it remains to show that the
condition (\ref{E:phi2}) implies that the conditions {\bf (2)} and
{\bf (3)} of Lemma \ref{L:six} are satisfied.

By (\ref{E:phi2}), (\ref{E:chi}), the inequality $2\le h\le f\le
d$, and the trivial observation that all functions
$\varphi_\alpha(\ep)$ do not exceed $1$ for $0\le \ep\le 1$, we
have
\begin{equation}\label{E:Ochi}(\varphi_{dj_0-f+1}(\ep))^d=O(\chi(\ep)).
\end{equation}

Now we verify the condition {\bf (3)} of Lemma \ref{L:six}. The
part (b) can be verified as follows. The conditions (\ref{E:phi1})
and (\ref{E:phi2}), together with $f\ge 2$ and
$\omega(\ep)=\ep^{4k}$, imply that
$\pi(\ep)=O(\varphi_{dj_0}(\ep))=o((\varphi_{dj_0-f+1}(\ep))^d)=o(\chi(\ep))$.

To verify the condition {\bf (2)} of Lemma \ref{L:six} it suffices
to observe that (\ref{E:Ochi}) and (\ref{E:phi1}) imply
$(\rho(\ep))^d=O(\chi(\ep))$. Hence {\bf (2)} is satisfied if
$2dk+4d^2k<1$. This inequality is among the conditions of Lemma
\ref{L:det0}. Hence we can apply Lemma \ref{L:six} and get the
conclusion of Lemma \ref{L:det0}.
\end{proof}

Now let $\tilde G$ be an approximation of $G$ by a matrix
satisfying the conditions of Lemma \ref{L:det0}. We use the same
maximal forest $\mathcal{F}$ in $\mathcal{G}$ as above.  It is
easy to show (and the corresponding result is well known in the
theory of matroids, see, for example, \cite[Theorem 6.4.7]{Oxl92})
that multiplying columns and rows of $\tilde G$ by positive
numbers we can make entries corresponding to edges of
$\mathcal{F}$ to be equal to $\pm 1$. Denote the obtained matrix
by $\widehat G$.

\begin{lemma}\label{L:pm1} If $\tilde G$ satisfies the conditions
of Lemma {\rm \ref{L:det0}}, then $\widehat G$ is a matrix with
entries $-1,0$, and $1$.
\end{lemma}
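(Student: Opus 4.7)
The plan is to prove Lemma \ref{L:pm1} by induction on the level of the entries of $\widehat G$, mirroring the level-based construction of $\tilde G$ carried out in the proof of Lemma \ref{L:det0}.

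For the base case, the level-$1$ entries of $\tilde G$ are by definition those corresponding to edges of the maximal forest $\mathcal{F}$. By the standard matroid-theoretic construction cited from \cite{Oxl92}, one can choose the positive row and column scalings defining $\widehat G$ precisely so that all such entries are rescaled to $\pm 1$; zero entries of $\tilde G$ remain zero under scaling. Hence every level-$1$ entry of $\widehat G$ lies in $\{-1,0,1\}$.

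For the inductive step, fix $f\ge 2$ and suppose every entry of $\widehat G$ of level at most $f-1$ lies in $\{-1,0,1\}$. Let $\widehat G_{ij}$ be an entry of level $f$, and write $\widehat G(ij)$ for the submatrix of $\widehat G$ occupying the same rows and columns as $\tilde G(ij)$. Row and column scalings only multiply the determinant of any square submatrix by a nonzero scalar, so part (b) of Lemma \ref{L:det0} yields $\det(\widehat G(ij))=0$. By the construction of $G(ij)$ its determinant expansion contains exactly two non-zero terms, so the same is true of $\widehat G(ij)$, giving
$$\prod_{n=1}^{h}\widehat a_n + (-1)^{h-1}\Bigl(\prod_{n=1}^{h-1}\widehat b_n\Bigr)\widehat G_{ij} = 0,$$
where each $\widehat a_n$ and each $\widehat b_n$ is a non-zero entry of $\widehat G(ij)$ distinct from $\widehat G_{ij}$, hence of level at most $f-1$ by property (4) of the definition of $G(ij)$. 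The inductive hypothesis forces each $\widehat a_n,\widehat b_n\in\{-1,1\}$, and solving the displayed equation gives $\widehat G_{ij}=\pm 1$.

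I do not anticipate a serious obstacle; the argument is essentially routine once the level structure of Lemma \ref{L:det0} is in place. The one thing to verify carefully is that every structural ingredient on which the induction depends --- the bipartite graph $\mathcal{G}$, the forest $\mathcal{F}$, the cycles $\mathcal{M}$, the assignment of levels, and the two-term form of the determinant expansion of $\widehat G(ij)$ --- is unaffected by the row and column scalings that pass from $\tilde G$ to $\widehat G$. This is immediate because positive scalings neither create nor destroy zero entries.
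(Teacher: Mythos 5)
Your proof is correct and uses essentially the same argument as the paper: the paper phrases it as a contradiction via an offending entry of minimal level, while you run the identical mechanism as a direct induction on level, relying on the same three ingredients (forest entries scaled to $\pm 1$, the two-term expansion of $\det\widehat G(ij)$ with all other non-zero entries of lower level, and the fact that row/column scalings preserve the vanishing of $\det\tilde G(ij)$ guaranteed by Lemma \ref{L:det0}(b)). No gap; the reformulation is purely cosmetic.
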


\begin{proof}{Proof} Assume the contrary, that is, there are
entries $\widehat G_{ij}$ which are not in the set $\{-1,0,1\}$.
Let $\widehat G_{ij}$ be one of such entries satisfying the
additional condition: the level $\ell(G_{ij})$ is the minimal
possible among all entries $\widehat G_{ij}$ which are not in
$\{-1,0,1\}$. Denote by $\widehat{G}(ij)$ the submatrix of
$\widehat G$ which corresponds to $G(ij)$.

Then, by observations preceding Lemma \ref{L:det0}, the expansion
of $\det\widehat{G}(ij)$ contains two non-zero terms: one of them
is $1$ or $-1$, the other is $\widehat{G}_{ij}$ or
$-\widehat{G}_{ij}$. Our assumptions imply that
$\det\widehat{G}(ij)\ne 0$. This contradicts
$\det\tilde{G}(ij)=0$, because $\widehat{G}$ is obtained from
$\tilde G$ using multiplications of columns and rows by numbers.
\end{proof}

In Lemma \ref{L:TU} we show that for functions
$\varphi_\alpha(\ep)$ chosen as above, the matrix $\widehat G$
should be totally unimodular for sufficiently small $\ep$. In
Lemma \ref{L:BM} we show how to estimate the Banach--Mazur
distance between $Z$ and $\mathcal{T}_d$ in the case when
$\widehat G$ is totally unimodular.

\begin{lemma}\label{L:TU} If $\ep>0$ is small enough, the matrix
$\widehat G$ is totally unimodular.
\end{lemma}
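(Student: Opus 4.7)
{Plan of proof of Lemma \ref{L:TU}}

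The plan is to show that total unimodularity of $\widehat G$ follows from three structural properties already established: (i) all entries lie in $\{-1,0,1\}$ by Lemma \ref{L:pm1}; (ii) the entries corresponding to edges of the forest $\mathcal{F}$ are $\pm 1$ by construction of $\widehat G$; and (iii) $\det\widehat G(ij)=0$ for every non-forest entry (equivalently, for every fundamental circuit of $\mathcal{G}$ with respect to $\mathcal{F}$). Property (iii) is inherited from $\det\tilde G(ij)=0$ of Lemma \ref{L:det0}: since $\widehat G = D_1 \tilde G D_2$ for positive diagonal matrices $D_1,D_2$, diagonal scalings preserve vanishing of determinants.

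Given (i)--(iii), one reduces Lemma \ref{L:TU} to the following classical matroid-theoretic statement: if a $\{-1,0,1\}$ matrix $M$ admits a spanning forest $\mathcal{F}$ of its support bipartite graph with all forest entries equal to $\pm 1$ and with $\det M(ij)=0$ for every non-forest entry, then $M$ is totally unimodular. I would then prove this statement by induction on the order $r$ of a square submatrix $S$ of $M$. The case $r=1$ is immediate. For $r\ge 2$, if some row or column of $S$ has exactly one non-zero entry, cofactor expansion produces an $(r-1)\times(r-1)$ submatrix $S'$ of $M$ with $|\det S|=|\det S'|$, and the induction hypothesis applies. Otherwise, the support graph of $S$ has minimum degree $\ge 2$ and therefore contains a cycle; this cycle decomposes (over $\mathbb{F}_2$) into fundamental circuits of $M$, and by combining the relations $\det M(ij)=0$ for these circuits, scaled by the $\pm 1$ forest entries, one obtains a non-trivial linear dependence with coefficients in $\{-1,0,1\}$ among the columns of $S$, forcing $\det S=0$.

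The main obstacle is the sign bookkeeping in the inductive step: when combining fundamental-circuit dependences along a longer cycle, the coefficients of the resulting dependence must remain in $\{-1,0,1\}$ rather than growing to larger integers. A clean way to carry out the bookkeeping is via Camion's characterization of totally unimodular matrices: a $\{-1,0,1\}$ matrix is TU if and only if every Eulerian square submatrix has entry-sum divisible by $4$. Applied to $\widehat G$, every Eulerian submatrix decomposes into a disjoint union of cycles in its support graph, each cycle is a symmetric difference of fundamental circuits with respect to $\mathcal{F}$, and the $\det\widehat G(ij)=0$ relations (together with the $\pm 1$ forest entries) force the required divisibility modulo $4$. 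Once the signs are handled correctly, the argument is entirely combinatorial and requires no further analytic input beyond Lemmas \ref{L:pm1} and \ref{L:det0}.
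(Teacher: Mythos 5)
There is a genuine gap: the combinatorial claim your whole plan rests on --- that a $\{-1,0,1\}$ matrix whose forest entries are $\pm1$ and whose associated submatrices $\widehat G(ij)$ have zero determinant for every non-forest entry must be totally unimodular --- is false, so Lemma \ref{L:TU} cannot be deduced from Lemmas \ref{L:pm1} and \ref{L:det0} alone. A counterexample is the signed Fano support
$$\left(\begin{array}{ccccccc} 1&0&0&1&1&0&1\\ 0&1&0&1&0&1&1\\ 0&0&1&0&1&-1&1\end{array}\right),$$
with maximal forest consisting of the nine entries $(1,1),(2,2),(3,3),(1,4),(1,5),(2,6),(1,7),(2,4),(3,5)$, all equal to $+1$. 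The non-forest entries are $(2,7)$, $(3,7)$, $(3,6)$; their submatrices in the sense of the construction preceding Lemma \ref{L:det0} are the $2\times2$ blocks on rows $1,2$, columns $4,7$ and rows $1,3$, columns $5,7$, and the chordless six-cycle block on rows $1,2,3$, columns $4,5,6$, and with the displayed signs all three determinants vanish. Yet the minor on columns $1,6,7$ equals $2$, so the matrix is not totally unimodular; indeed no signing of this support is, since the Fano matroid is not regular. This is also exactly where your appeal to Camion's mod-$4$ criterion breaks down: decomposing a cycle into fundamental circuits over $\mathbb{F}_2$ does not control the entry sums mod $4$ (equivalently, does not produce a real linear dependence with $\{-1,0,1\}$ coefficients) unless the support matroid is already regular, which is what one is trying to prove.

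The paper's proof is therefore not combinatorial at this point: it uses the hypothesis $Z\in\mathcal{Z}_\ep$ a second time. Assuming $\widehat G$ is not totally unimodular, Lemma \ref{L:Gomory} (applicable because $\widehat G$ contains a $d\times d$ identity block) produces $d+2$ columns all six of whose relevant $d\times d$ minors are nonzero, hence of absolute value at least $1$ since the entries are integers. These lower bounds are carried back to $\tilde G$ by the row/column-scaling observation and then to $G$ by Lemma \ref{L:detperturb}, giving six minors of size at least $\chi(\ep)=(\varphi_{d(j_0-1)}(\ep)/2)^d-d^2\varphi_{d(j_0-1)+1}(\ep)(1+d\varphi_{d(j_0-1)+1}(\ep))^{d-1}$ built from normalized columns lying within $\pi(\ep)$ of points carrying $\cm$-mass at least $\sigma(\ep)$. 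Lemma \ref{L:six}, which rests on the Pl\"ucker relations for the Grassmann coordinates $u_i$ and the near-minimal-volume inequality (\ref{E:1+ep}), excludes such a configuration for small $\ep$, and this contradiction is what forces total unimodularity. Your proposal never invokes this analytic input, and without it the statement you reduce to is simply not true of the matrices produced by the construction.
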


\begin{proof}{Proof} The conclusion of Lemma \ref{L:det0} implies
that each entry of $\tilde G$ is a
$\varphi_{d(j_0-1)+1}(\ep)$-approximation of an entry from $G$.
Therefore for small $\ep$ the absolute value of each non-zero
entry of $\tilde G$ is at least $\varphi_{d(j_0-1)}(\ep)/2$. This
implies the following observation.
\medskip

\noindent{\bf Observation.} Each $d\times d$ minor of $\tilde G$
is a product of the corresponding minor of $\widehat G$ and a
number $\zeta$ satisfying
$(\varphi_{d(j_0-1)}(\ep)/2)^d\le\zeta\le 1$.

\begin{proof}{Proof} Consider a square submatrix
$\tilde{\mathcal{S}}$ in $\tilde G$ and the corresponding
submatrix $\widehat{\mathcal{S}}$ in $\widehat G$. If the
corresponding minor is zero, there is nothing to prove. If it is
non-zero, we reorder columns and rows of $\tilde{\mathcal{S}}$ in
such a way that all entries on the diagonal become non-zero, and
do the same reordering with $\widehat{\mathcal{S}}$. Let
$\mathfrak{r}_i,\mathfrak{c}_j>0$ be such that after multiplying
rows of $\widehat{\mathcal{S}}$ by $\mathfrak{r}_i$ and columns of
the resulting matrix by $\mathfrak{c}_j$ we get
$\tilde{\mathcal{S}}$. Then
$$\det(\tilde{\mathcal{S}})=\det(\widehat{\mathcal{S}})\prod_i\mathfrak{r}_i\prod_j\mathfrak{c}_j.$$
On the other hand,
$\mathfrak{r}_i\mathfrak{c}_i\ge\varphi_{d(j_0-1)}(\ep)/2$,
because the diagonal entry of $\widehat{\mathcal{S}}$ is $\pm1$,
and the absolute value of the diagonal entry of
$\tilde{\mathcal{S}}$ is $\ge\varphi_{d(j_0-1)}(\ep)/2$. The
conclusion follows.
\end{proof}

\begin{lemma}\label{L:Gomory} Let $D$ be a $d\times J$ matrix with entries $-1,0$, and $1$, containing a $d\times d$ identity
submatrix. If $D$ is not totally unimodular, then it contains
$(d+2)$ columns $\{\widehat x_1,\dots,\widehat x_{d-2}$, $\widehat
p_1,\widehat p_2,\widehat p_3, \widehat p_4\}$ such that for all
six choices of two vectors from the set $\{\widehat p_1,\widehat
p_2$, $\widehat p_3,\widehat p_4\}$ minors obtained by joining
them to $\{\widehat x_1,\dots,\widehat x_{d-2}\}$ are non-zero.
\end{lemma}

\begin{proof}{Proof} Our argument follows
\cite[pp.~1068--1069]{Cam65} (see, also,
\cite[pp.~269--271]{Sch86}), where a similar statement is
attributed to R.~Gomory.
\medskip

Suppose that $D$ is not totally unimodular, then it has a square
submatrix $\mathcal{S}$ with $|\det(\mathcal{S})|\ge 2$. Let
$\mathcal{S}$ be of size $h\times h$. Reordering columns and rows
of $D$ (if necessary), we may assume that $D$ is of the form:
$$D=\left(\begin{array}{cccc}\mathcal{S} & 0 & I_h & *\\
* & I_{d-h} & 0 & *\end{array}\right),
$$
where $I_h$ and $I_{d-h}$ are identity matrices of sizes $h\times
h$ and $(d-h)\times (d-h)$, respectively, $0$ denote matrices with
zero entries of the corresponding dimensions, and $*$ denote
matrices of the corresponding dimensions with unspecified entries.
\medskip

We consider all matrices which can be obtained from $D$ by a
sequence of the following operations:
\begin{itemize}
\item Addition or subtraction a row to or from another row, \item
Multiplication of a column by $-1$, \end{itemize} provided that
after each such operation we get a matrix with entries $-1,0$, and
$1$.

Among all matrices obtained from $D$ in such a way we select a
matrix $\widehat D$ which satisfies the following conditions:
\begin{itemize}
\item[(1)] Has all unit vectors among its columns; \item[(2)] Has
the maximal possible number $\xi$ of unit vectors among the first
$d$ columns.
\end{itemize}

Observe that $\xi<d$ because the operations listed above preserve
the absolute value of the determinant and at the beginning the
absolute value of the determinant formed by the first $d$ columns
was $\ge 2$. Let $d_r$ be one of the first $d$ columns of
$\widehat D$ which is not a unit vector. Let $\{i_1,\dots,i_t\}$
be indices of its non-zero coordinates. Then at least one of the
unit vectors $e_{i_1},\dots,e_{i_t}$ is not among the first $d$
columns of $\widehat D$ (the first $d$ columns of $\widehat D$ are
linearly independent). Assume that $e_{i_1}$ is not among the
first $d$ columns of $\widehat D$. We can try to transform
$\widehat D$ adding/subtracting the row number $i_1$ to/from rows
number $i_2,\dots, i_t$ (and multiplying the column number $r$ by
$(-1)$, if necessary) into a new matrix $\tilde D$ which satisfies
the following conditions:
\begin{itemize}
\item Has among the first $d$ columns all the unit vectors it had
before; \item Has $e_{i_1}$ as its column number $r$; \item Has
all the unit vectors among its columns.
\end{itemize}
It is not difficult to verify that the only possible obstacle is
that there exists another column $d_t$ in $\widehat D$, such that
for some $s\in\{2,\dots,t\}$
\begin{equation}\label{E:D}\left|\det\left(\begin{array}{cc} D_{i_1r} &
D_{i_1t}\\D_{i_sr} &
D_{i_st}\end{array}\right)\right|=2,\end{equation} where by
$D_{ij}$ we denote entries of $\widehat D$. By the maximality
assumption, a submatrix satisfying (\ref{E:D}) exists.

It is easy to see that letting $\{\widehat p_1,\widehat
p_2,\widehat p_3, \widehat p_4\}=\{d_r,d_s,e_{i_1},e_{i_s}\}$, and
$\{\widehat x_1,\dots,\widehat
x_{d-2}=\{e_1,\dots,e_d\}\backslash\{e_{i_1},e_{i_s}\}$, we get a
set of columns of $\widehat D$ satisfying the required condition.

Since the operations listed above preserve the absolute values of
$d\times d$ minors, the corresponding columns of $D$ form the
desired set.
\end{proof}

\begin{remark}{Remark} Lemma \ref{L:Gomory} can also be obtained by
combining known characterizations of regular and binary matroids,
see \cite{Oxl92} (we mean, first of all, Theorem 9.1.5, Theorem
6.6.3, Corollary 10.1.4, and Proposition 3.2.6).
\end{remark}

We continue our proof of Lemma \ref{L:TU}. Assume the contrary.
Since there are finitely many possible values of $j_0$, there is
$j_0$ and a subset $\Phi_4\subset(0,1)$, whose closure contains
$0$, satisfying the condition:

For each $\ep\in\Phi_4$ there is $Z\in\mathcal{Z}_\ep$ such that
following the construction, we get the preselected value of $j_0$,
and the obtained matrix $\widehat G$ is not totally unimodular.
\medskip

Since the entries of $\widehat G$ are integers, the absolute
values of the minors are at least one. We are going to show that
the corresponding minors of $G$ are also `sufficiently large', and
get a contradiction using Lemma \ref{L:six}.
\medskip

By the observation above the corresponding minors of $\tilde G$
are at least $(\varphi_{d(j_0-1)}(\ep)/2)^d$. The Euclidean norm
of a column in $\tilde G$ is at most
$1+d\varphi_{d(j_0-1)+1}(\ep)$. Applying Lemma \ref{L:detperturb}
$d$ times we get that the corresponding minor of $G$ are at least
$$(\varphi_{d(j_0-1)}(\ep)/2)^d-d^2\varphi_{d(j_0-1)+1}(\ep)\cdot(1+d\varphi_{d(j_0-1)+1}(\ep))^{d-1}.$$

We are going to use Lemma \ref{L:six} for $x_1,\dots,x_{d-2},
p_1,p_2,p_3,p_4$ defined in the following way. Let  $\check
x_1,\dots,\check x_{d-2},\check p_1,\check p_2,\check p_3,\check
p_4$ be the columns of $G$ corresponding to the columns $\widehat
x_1,\dots,\widehat x_{d-2}, \widehat p_1,\widehat p_2,\widehat
p_3, \widehat p_4$ of $\widehat G$, and $x_1,\dots,x_{d-2},
p_1,p_2,p_3,p_4$ be their normalizations (that is, $x_1=\check
x_1/||\check x_1||$, etc). Since norms of columns of $G$ are $\le
1$, the condition (\ref{E:detge}) of Lemma \ref{L:six} is
satisfied with
$$\chi(\ep)=(\varphi_{d(j_0-1)}(\ep)/2)^d-d^2\varphi_{d(j_0-1)+1}(\ep)\cdot(1+d\varphi_{d(j_0-1)+1}(\ep))^{d-1}.$$

Now we recall that columns $\{g_j\}$ of $G$ satisfy (\ref{E:gx})
for some vectors $\mathfrak{b}_j\in\co(\omega(\ep), \delta(\ep))$.
Hence the distance from $x_1,\dots,x_{d-2}, p_1,p_2,p_3,p_4$ to
the corresponding vectors $\mathfrak{b}_j$ is $\le
2d\varphi_{dj_0}(\ep)$. By (\ref{E:Obs2}) the condition
(\ref{E:measure}) is satisfied with
$$\pi(\ep)=2d\varphi_{dj_0}(\ep)+C_5(d)\omega(\ep)$$
and
$$\sigma(\ep)=c_3(d)\delta(\ep).$$
The fact that the conditions {\bf (2)} and {\bf (3)} of Lemma
\ref{L:six} are satisfied is verified in the same way as at the
end of Lemma \ref{L:det0}, the only difference is that instead of
(\ref{E:Ochi}) we have $(\varphi_{d(j_0-1)}(\ep))^d=O(\chi(\ep))$.
This does not affect the rest of the argument. Therefore, under
the same condition on $k$ as in Lemma \ref{L:det0} we get, by
Lemma \ref{L:six}, that $\widehat G$ should be totally unimodular
if $\ep>0$ is small enough.
\end{proof}

\begin{lemma}\label{L:BM} If $\widehat G$ is totally unimodular,
then there exists a zonotope $T\in\mathcal{T}_d$ such that
$$d(Z,T)\le\mathfrak{t}_d(\ep),$$
where $\mathfrak{t}_d(\ep)$ is a function satisfying
$\lim_{\ep\downarrow 0}\mathfrak{t}_d(\ep)=1$.
\end{lemma}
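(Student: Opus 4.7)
The plan is to route from $Z$ to a target $T \in \mathcal{T}_d$ through two intermediate zonotopes, comparing support functions at each stage and converting to Banach--Mazur bounds. Specifically, I build $T$ so that
$$d(Z, T) \le d(Z, Z_1) \cdot d(Z_1, D_r^{-1}(T)) \cdot d(D_r^{-1}(T), T),$$
where $Z_1$ rounds the generators of $Z$ to the discrete set $\{\pm\mathfrak{b}_j\}_{j=1}^J$, the map $D_r$ is the positive diagonal matrix from the row scaling used to pass from $\tilde G$ to $\widehat G$, and the last factor equals $1$ because those two bodies are linearly equivalent.

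First, I would clean the measure $\cm$. By (\ref{E:Obs1}), the total $\cm$-mass on $\check S \setminus (\co(\omega(\ep),\delta(\ep)))_{C_3(d)\omega(\ep)}$ is at most $C_4(d)\delta(\ep)/\omega(\ep)^{d-1} = O(\ep^{4k})$. Partitioning the remaining support by top set, Lemma \ref{L:smalldistance} shows each cell lies within distance $\sqrt{2\nu(\ep)/\rho(\ep)^2 + 4d\rho(\ep)^2} + C_3(d)\omega(\ep) = o(1)$ of $\{\pm\mathfrak{b}_j\}$ for the unique $j$ with that top set. Setting $b_j$ equal to the total $\cm$-mass of the cell of $\mathfrak{b}_j$ and defining
$$Z_1 = \sum_{j=1}^J I(b_j \mathfrak{b}_j),$$
a direct comparison of the integral representation (\ref{E:support}) of $h_Z$ with the finite sum for $h_{Z_1}$ yields $|h_Z(x) - h_{Z_1}(x)| \le \eta_1(\ep)\|x\|$ with $\eta_1(\ep) \downarrow 0$.

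Next, let $r_1,\dots,r_d>0$ and $c_1,\dots,c_J>0$ be the row and column multipliers with $\widehat G_{ij} = r_i c_j \tilde G_{ij}$, and set $D_r = \mathrm{diag}(r_1,\dots,r_d)$. Since $\widehat G$ is totally unimodular and of rank $d$ (because $\{e_i\}_{i=1}^d \subset \Theta$ guarantees $d$ independent columns), and since $b_j > 0$ by (\ref{E:Obs2}), the zonotope
$$T = \sum_{j=1}^J I\left(\tfrac{b_j}{c_j}\, \widehat g_j\right)$$
lies in $\mathcal{T}_d$, and one checks that $D_r^{-1}(T) = \sum_j I(b_j \tilde g_j)$. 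Combining (\ref{E:gx}) with the bound $|\tilde G_{ij} - G_{ij}| < \varphi_{dj_0-f+1}(\ep)$ from Lemma \ref{L:det0} gives $\|\tilde g_j - \mathfrak{b}_j\| = o(1)$, and a term-wise comparison yields $|h_{Z_1}(x) - h_{D_r^{-1}(T)}(x)| \le \eta_2(\ep)\|x\|$ with $\eta_2(\ep) \downarrow 0$, using $\sum_j b_j \le \cm(\check S) \le d$ from the proof of Lemma \ref{L:bases}. Since $Z \supset B_2^d$ gives $h_Z(x) \ge \|x\|$, both support-function gaps convert multiplicatively to Banach--Mazur bounds $d(Z, Z_1), d(Z_1, D_r^{-1}(T)) \le 1 + O(\eta_1(\ep) + \eta_2(\ep))$, so $d(Z, T) \le \mathfrak{t}_d(\ep)$ with $\mathfrak{t}_d(\ep) \downarrow 1$.

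The main obstacle is the quantitative bookkeeping across several nested scales: verifying $\eta_1, \eta_2 \downarrow 0$ for the specific choices $\omega(\ep) = \ep^{4k}$, $\delta(\ep) = \ep^{4dk}$, $\rho(\ep) = \ep^k$, $\nu(\ep) = \ep^{3k}$ and the hierarchy (\ref{E:phi1})--(\ref{E:phi2}) of $\varphi_\alpha(\ep)$, and confirming that $Z_1$ and $D_r^{-1}(T)$ each inherit a Euclidean ball on a scale comparable to $B_2^d$, a condition needed to convert the support-function estimates into Banach--Mazur bounds. The latter property follows because $\{e_i\}_{i=1}^d \subset \Theta$ injects a full-dimensional cross of segments into $Z_1$ with masses bounded below by (\ref{E:Obs2}), and the small perturbations from $\mathfrak{b}_j$ to $\tilde g_j$ preserve this cross.
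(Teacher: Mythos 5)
Your proposal is correct and follows essentially the same route as the paper's proof: the paper likewise approximates the generating measure $\cm$ by a measure supported on (normalized) columns of $\tilde G$ --- erasing the mass outside $(\co(\omega(\ep),\delta(\ep)))_{C_3(d)\omega(\ep)}$ via (\ref{E:Obs1}) and transporting the rest using the chain Lemma \ref{L:smalldistance}, (\ref{E:gx}), and the perturbation bound of Lemma \ref{L:det0} --- and then converts the resulting $o(1)$ support-function error into a Banach--Mazur bound using the uniform lower bound $\check h_Z(u)\ge C_7(d)$ and the bound on $\cm(\check S)$; your intermediate zonotope $Z_1$ and the explicit diagonal scaling $D_r$ are just a factorized version of the paper's one-step transport combined with its re-scaling of the basis $\{e_i\}$. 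One small correction: your closing claim that $Z_1$ and $D_r^{-1}(T)$ contain a Euclidean ball of scale comparable to $B_2^d$ because the cross of segments in the directions $e_i$ has masses bounded below by (\ref{E:Obs2}) does not work, since those masses are only $\ge c_3(d)\delta(\ep)=c_3(d)\ep^{4dk}\to 0$. This extra condition is, however, unnecessary: the lower bound $\check h_Z\ge C_7(d)$ (coming from $Z\supset B_2^d$ and Lemma \ref{L:bases}) transfers to $h_{Z_1}$ and $h_{D_r^{-1}(T)}$ through your additive $o(1)$ estimates, which is exactly how the paper closes the argument.
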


\begin{proof}{Proof} Observe that the matrix $\tilde G$ can be
obtained from $\widehat G$ using multiplications of rows and
columns by positive numbers. Hence, re-scaling the basis
$\{e_i\}$, if necessary, we get: columns of $\tilde G$ with
respect to the re-scaled basis are of the form $a_i\tau_i$, where
$\tau_i$ are columns of a totally unimodular matrix (see the
definition of $\mathcal{T}_d$ in the introduction).

We are going to approximate the measure $\cm$ by a measure
$\widehat{\mu}$ supported on vectors which are normalized columns
of $\tilde G$. Recall that $\cm$ is supported on a finite subset
of $\check S$.

The approximation is constructed in the following way. We erase
the measure $\cm$ supported outside
$(\co(\omega(\ep),\delta(\ep)))_{C_3(d)\omega(\ep)}$. The total
mass of the measure erased in this way is small by (\ref{E:Obs1}).
As for the measure supported on
$\mathcal{B}:=(\co(\omega(\ep),\delta(\ep)))_{C_3(d)\omega(\ep)}$,
we approximate each atom of it by the atom of the same mass
supported on the nearest normalized column of $\tilde G$. We
denote the nearest to $z\in\supp\check\mu$ normalized column of
$\tilde G$ by $\mathcal{A}(z)$. If there are several such columns,
we choose one of them.\medskip

Now we estimate the distance from a point of
$(\co(\omega(\ep),\delta(\ep)))_{C_3(d)\omega(\ep)}$ to the
nearest normalized column of $\tilde G$. The distance from this
point to $\co(\omega(\ep),\delta(\ep))$ is ${C_3(d)\omega(\ep)}$,
the distance from a point from $\co(\omega(\ep),\delta(\ep))$ to
the point from $\Theta(\omega(\ep),\delta(\ep))$ with the same top
set (or its opposite), by Lemma \ref{L:smalldistance}, can be
estimated from above by
$\sqrt{2\frac{\nu(\ep)}{(\rho(\ep))^2}+4d\rho(\ep)^2}$. The
distance from a point in $\Theta(\omega(\ep),\delta(\ep))$ to the
corresponding column of $G$ is estimated in (\ref{E:gx}), it is
$\le d\cdot\varphi_{dj_0}(\ep)$, so it is $\le
d\cdot\varphi_{1}(\ep)$, and the distance from a column of $G$ to
the corresponding column of $\tilde G$ is $\le
d\cdot\varphi_{d(j_0-1)+1}(\ep)\le d\cdot\varphi_1(\ep)$. Since we
have to normalize this vector, the total distance from a point of
$(\co(\omega(\ep),\delta(\ep)))_{C_3(d)\omega(\ep)}$ to the
nearest normalized column of $\tilde G$ can be estimated from
above by
$${C_3(d)\omega(\ep)}+\sqrt{2\frac{\nu(\ep)}{(\rho(\ep))^2}+4d\rho(\ep)^2}+4
d\cdot\varphi_{1}(\ep)$$ It is clear that this function, let us
denote it by $\zeta(\ep)$, tends to $0$ as $\ep\downarrow 0$,
recall that $\rho(\ep)=e^k$, $\nu(\ep)=\ep^{3k}$,
$\omega(\ep)=\ep^{4k}$,
$\displaystyle{\varphi_1(\ep)=\ep^{{\left(\frac1{d+1}\right)^{d\mathfrak{n}-1}}}}$.
The obtained measure corresponds to a zonotope from
$\mathcal{T}_d$. Let us denote this zonotope by $T$.
\medskip

Since the dual norms to the gauge functions of $Z$ and $T$ are
their support functions, we get the estimate
$$d(T,Z)\le\sup_{u\in\check S}\frac{\check h_Z(u)}{\check h_T(u)}\cdot
\sup_{u\in\check S}\frac{\check h_T(u)}{\check h_Z(u)}.$$

So it is enough to show that
\begin{equation}\label{E:C1C2}
C_1(d,\ep)\le \frac{\check h_T(u)}{\check h_Z(u)}\le
C_2(d,\ep),\end{equation} where $\lim_{\ep\downarrow
0}C_1(d,\ep)=\lim_{\ep\downarrow 0}C_2(d,\ep)=1$. \medskip

Observe that Lemma \ref{L:bases} implies that there exists a
constant $0<C_7(d)<\infty$ such that
\begin{equation}\label{E:C7}C_7(d)\le \check h_Z(u),~~ \forall
u\in\check S.\end{equation}

We have
\begin{align*}\begin{split} \check h_Z(u)&=\int_{\check S}|\langle
u,z\rangle|d\cm(z)\le \int_{\check S\backslash
\mathcal{B}}|\langle
u,z\rangle|d\cm(z)\\
&\quad+ \int_{\check S} |\langle
u,z\rangle|d\widehat{\mu}(z)+\sum_{z\in\supp\check \mu\cap
\mathcal{B}}\left(|\langle u,z\rangle-\langle
u,\mathcal{A}(z)\rangle|\right)\check\mu(z)
\\&
\le C_4(d)\frac{\delta(\ep)}{\omega^{d-1}(\ep)}+\check
h_T(u)+\zeta(\ep)\cm(\check S), ~~ \forall u\in\check S.
\end{split}\end{align*}
In a similar way we get
\begin{align*}\begin{split} \check h_T(u)&=\int_{\check S}|\langle
u,z\rangle|d\widehat{\mu}(z)\le \int_{\mathcal{B}} |\langle
u,z\rangle|d\cm(z)\\
&\quad+\sum_{z\in\supp\check \mu\cap\mathcal{B}}\left(|\langle
u,z\rangle-\langle u,\mathcal{A}(z)\rangle|\right)\check\mu(z)
\\&
\le \check h_Z(u)+\zeta(\ep)\cm(\check S), ~~ \forall u\in S.
\end{split}\end{align*}

Using (\ref{E:C7}) we get
$$1-\frac{C_4(d)\frac{\delta(\ep)}{\omega^{d-1}(\ep)}}{C_7(d)}-\frac{\zeta(\ep)\cm(\check
S)}{C_7(d)}\le\frac{\check h_T(u)}{\check h_Z(u)}\le
1+\frac{\zeta(\ep)\cm(\check S)}{C_7(d)}.$$ It is an estimate of
the form (\ref{E:C1C2}), Q.E.D.
\end{proof}

It is clear that Lemma \ref{L:BM} completes our proof of Lemma
\ref{L:APPROX}.
\end{proof}

\section{Proof of Theorem \ref{T:NP}}

\begin{proof}{Proof} We start by proving Theorem \ref{T:NP} for polyhedral
$X$. In this case we can consider $X$ as a subspace of
$\ell_\infty^m$ for some $m\in {\bf N}$. Since $X$ has an MVSE
which is not a parallelepiped, there exists a linear projection
$P:\ell_\infty^m \to X$ such that $P(B_\infty^m)$ has the minimal
possible volume, but $P(B_\infty^m)$ is not a parallelepiped. Let
$d=\dim X$, let $\{q_1,\dots,q_{m-d}\}$ be an orthonormal basis in
$\ker P$ and let $\{\tilde q_1,\dots,\tilde q_d\}$ be an
orthonormal basis in the orthogonal complement of $\ker P$. As it
was shown in Lemma \ref{L:shape}, $P(B_\infty^m)$ is linearly
equivalent to the zonotope spanned by rows of $\tilde Q=[\tilde
q_1,\dots,\tilde q_d]$. By the assumption this zonotope is not a
parallelepiped. It is easy to see that this assumption is
equivalent to: there exists a minimal linearly dependent
collection of rows of $\tilde Q$ containing $\ge 3$ rows. This
condition implies that we can reorder the coordinates in
$\ell_\infty^m$ and multiply the matrix $\tilde Q$ from the right
by an invertible $d\times d$ matrix $C_1$ in such a way that
$\tilde QC_1$ has a submatrix of the form
$$\left(
\begin{array}{cccc}
1 & 0 & \dots & 0\\
0 & 1 & \dots & 0\\
\vdots & \vdots & \ddots & \vdots\\
0 & 0 & \dots & 1\\
a_1 & a_2 & \dots & a_d
\end{array}\right),
$$
where $a_1\ne 0$ and $a_2\ne 0$. Let $\mathcal{X}$ be a matrix
whose columns form a basis of $X$. The argument of \cite{laa} (see
the conditions (1)--(3) on p.~96) implies that $\mathcal{X}$ can
be multiplied from the right by an invertible $d\times d$ matrix
$C_2$ in such a way that $\mathcal{X}C_2$ is of the form
$$\left(
\begin{array}{cccc}
1 & 0 & \dots & 0\\
0 & 1 & \dots & 0\\
\vdots & \vdots & \ddots & \vdots\\
0 & 0 & \dots & 1\\
\sign a_1 & \sign a_2 & \dots & *\\
\vdots & \vdots & \ddots & \vdots
\end{array}\right),
$$
where at the top there is an $d\times d$ identity matrix, and all
minors of the matrix $\mathcal{X}C_2$ have absolute values $\le
1$.
\medskip

Changing signs of the first two columns, if necessary, we get that
the subspace $X\subset\ell_\infty^m$ is spanned by columns of the
matrix
\begin{equation}\label{Z}
\left(
\begin{array}{ccccc}
\pm1 & 0 & 0 &\dots & 0\\
0 & \pm1 & 0 &\dots & 0\\
0 & 0 & 1 &\dots & 0\\
\vdots & \vdots & \vdots & \ddots & \vdots\\
0 & 0 & 0 &\dots & 1\\
1 & 1 & * & \dots & *\\
b_1 & c_1 & * & \dots & *\\
b_2 & c_2 & * & \dots & *\\
\vdots & \vdots & \vdots & \ddots & \vdots\\
b_{m-l-1} & c_{m-l-1} & * & \dots & *
\end{array}\right).
\end{equation}
The condition on the minors implies that $|b_i|\le 1$, $|c_i|\le
1$, and $|b_i-c_i|\le 1$ for each $i$. Therefore the subspace,
spanned in $\ell_\infty^m$ by the first two columns of the matrix
(\ref{Z}) is isometric to ${\bf R}^2$ with the norm
$$||(\alpha, \beta)||=\max(|\alpha|, |\beta|, |\alpha+\beta|).$$
It is easy to see that the unit ball of this space is linearly
equivalent to a regular hexagon. Thus, Theorem \ref{T:NP} is
proved in the case when $X$ is polyhedral.
\medskip

Proving the result for general, not necessarily polyhedral, space,
we shall denote the space by $Y$. We use Theorem \ref{T:MVSE}.
Actually we need only the following corollary of it: {\it  Each
MVSE is a polyhedron.} Therefore we can apply the following result
to each MVSE.

\begin{lemma}\label{T:polyhedral} {\rm\cite[Lemma 1]{Ost04}}
Let $Y$ be a finite dimensional space and let $A$ be a polyhedral
MVSE for $Y$. Then there exists another norm on $Y$ such that the
obtained normed space $X$ satisfies the conditions:
\medskip

\noindent{\rm (1)} $X$ is polyhedral;

\noindent{\rm (2)} $B_X\supset B_Y$;

\noindent{\rm (3)} $A$ is an MVSE for $X$.
\end{lemma}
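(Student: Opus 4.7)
The strategy is to replace the norm of $Y$ by a polyhedral norm pointwise dominated by $\|\cdot\|_Y$ (so $B_X\supset B_Y$ automatically) that still supports the sufficient-enlargement witnessed by $A$.

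\emph{Construction.} Apply the SE property of $A$ to the canonical isometric embedding $Y\hookrightarrow\ell_\infty(B_{Y^*})$ to obtain a projection whose image of the unit ball lies in $A$. Because $A$ is polyhedral, this projection can be taken to have finite support: there exist $f_1,\dots,f_N\in B_{Y^*}$ spanning $Y^*$ and vectors $y_1,\dots,y_N\in Y$ with the resolution of identity $\sum_j f_j(y)\,y_j=y$ for every $y\in Y$ and $A=\sum_j[-y_j,y_j]$. Now define $\|y\|_X:=\max_j|f_j(y)|$ on the vector space underlying $Y$. The map $\iota:y\mapsto(f_j(y))_j$ is an isometric embedding of $X:=(Y,\|\cdot\|_X)$ into $\ell_\infty^N$, so $X$ is polyhedral; the bounds $f_j\in B_{Y^*}$ yield $\|\cdot\|_X\le\|\cdot\|_Y$ and hence $B_X\supset B_Y$, giving (1) and (2).

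\emph{Verification of (3).} Let $P':\ell_\infty^N\to X$ be given by $P'(z)=\sum_j z_j y_j$; the resolution of identity shows $P'\circ\iota=I_X$, and $P'(B_\infty^N)=A$. For an arbitrary isometric embedding $X\hookrightarrow Z$, use $1$-injectivity of $\ell_\infty^N$ to extend $\iota$ to a norm-one map $Z\to\ell_\infty^N$ and compose with $P'$; the resulting projection $Z\to X$ carries $B_Z$ inside $A$, so $A$ is an SE for $X$. For minimality, suppose an SE $D$ of $X$ satisfies $\vol D<\vol A$. Applying the SE definition to the isometric embedding $\iota$ itself produces a projection $Q:\ell_\infty^N\to X$ with $Q(B_\infty^N)\subset D$. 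Reinterpreting $Q$ via the resolution $\sum_j f_j(y)y_j=y$ on the $Y$-side yields a projection realizing an SE for $Y$ of volume at most $\vol D<\vol A$, contradicting the MVSE property of $A$ for $Y$.

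The main obstacle lies in the Construction step: producing the finite-support, resolution-of-identity representation of $A$, and (dually) transferring the minimality argument from $X$ back to $Y$. Both rely on polyhedrality of $A$, which restricts the relevant projections to a finite-dimensional $\ell_\infty^N$-section through which the duality between $X$ and $Y$ can be controlled explicitly; one must verify that the $f_j$ witnessing $A$'s facets together with the $y_j$ form a compatible dual pair, and that volume comparisons for zonotopic images descend from $X$ to $Y$ unchanged because $A$ itself is fixed throughout.
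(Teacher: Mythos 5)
The paper itself does not prove this lemma (it is quoted from \cite[Lemma 1]{Ost04}), so your proposal has to be judged as a self-contained argument, and its overall architecture is indeed the right one: a finite ``resolution of the identity'' $\sum_{j=1}^N f_j(y)y_j=y$ with $f_j\in B_{Y^*}$ and $\sum_j[-y_j,y_j]\subset A$, the polyhedral norm $\|y\|_X=\max_j|f_j(y)|$, injectivity of $\ell_\infty^N$ (equivalently Hahn--Banach extension of the $f_j$) to see that $A$ is an SE for $X$, and a pull-back of minimality from $X$ to $Y$. The genuine gap is exactly where you place your ``main obstacle'': the Construction step is asserted, not proved, and the stated justification is not valid. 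A projection $P:\ell_\infty(B_{Y^*})\to Y$ with $P(B)\subset A$ is given by finitely many functionals on $\ell_\infty(B_{Y^*})$, i.e.\ by finitely additive measures on $B_{Y^*}$; its image of the ball is a zonoid contained in $A$, and polyhedrality of the \emph{container} $A$ does not force that zonoid to be a zonotope, nor the functionals to be finitely supported, nor the image to equal $A$. To get your finite system one needs substantially more: the characterization of sufficient enlargements by (countable) resolutions of the identity from Ostrovskii's earlier papers \cite{extracta,archiv}, and then the \emph{minimality} of $A$ (not its polyhedrality alone): the zonoid $Z=\overline{\sum_i[-x_i,x_i]}$ attached to such a resolution is itself an SE (extend each $f_i$ by Hahn--Banach), so $\vol Z\ge\vol A$ and hence $Z=A$; only now does polyhedrality enter, making $A$ a zonotope, after which uniqueness of the generating measure of a zonoid lets one regroup the segments parallel to each generator $v_k$ and collapse the corresponding functionals into a single $g_k$ with $\|g_k\|_{Y^*}\le 1$. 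This regrouping is essentially the whole content of the cited lemma, and it is missing from your write-up.

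Two smaller points. First, you do not actually need $A=\sum_j[-y_j,y_j]$ anywhere; $\sum_j[-y_j,y_j]\subset A$ suffices for both (3)-claims, so you should not burden the construction with the equality. Second, the minimality transfer should be spelled out: from $Q:\ell_\infty^N\to X$ with $Q(B_\infty^N)\subset D$ you obtain a new resolution $\sum_j f_j(y)u_j=y$ with the \emph{same} $f_j\in B_{Y^*}$, and then for an arbitrary isometric embedding $Y\subset W$ you extend each $f_j$ to $F_j\in B_{W^*}$ and use $w\mapsto\sum_j F_j(w)u_j$; this shows the zonotope $\sum_j[-u_j,u_j]\subset D$ is an SE for $Y$ of volume less than $\vol A$, which is the contradiction. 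As written, ``reinterpreting $Q$'' produces a projection for one particular embedding of $Y$ only, which by itself does not exhibit an SE for $Y$; the fix is routine, but the Construction step remains a real gap.
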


So we consider the space $Y$ as being embedded into a polyhedral
space $X$ with the embedding satisfying the conditions of Lemma
\ref{T:polyhedral}. By the first part of the proof the space $X$
satisfies the conditions of Theorem \ref{T:NP} and we may assume
that $X$ is a subspace $\ell_\infty^m$ in the way described in the
first part of the proof. So $X$ is spanned by columns - let us
denote them by $e_1, \dots, e_d$ - of the matrix (\ref{Z}) in
$\ell_\infty^m$. It is easy to see that to finish the proof it is
enough to show that the vectors $e_1$, $e_2$, $e_1-e_2$ are in
$B_Y$.
\medskip

It turns out each of these points is the center of a facet of a
minimum-volume parallelepiped containing $B_X$. In fact, let
$\{f_i\}_{i=1}^m$ be the unit vector basis of $\ell_\infty^m$. Let
$P_1$ and $P_2$ be the projections onto $Y$ with the kernels
$\lin\{f_{d+1},\dots,f_m\}$ and $\lin\{f_1,f_{d+2},\dots,f_m\}$,
respectively (recall that $Y$, as a linear space, coincides with
$X$). The analysis from \cite[pp.~318--319]{jfa} shows that
$P_1(B_\infty^m)$ and $P_2(B_\infty^m)$ have the minimal possible
volume among all linear projections of $B_\infty^m$ into $X$. It
is easy to see that $P_1(B_\infty^m)$ and $P_2(B_\infty^m)$ are
parallelepipeds.
\medskip

We show that $e_1$, $e_2$ are centers of facets of
$P_1(B_\infty^m)$, and that $e_1-e_2$ is the center of a facet of
$P_2(B_\infty^m)$. In fact, the centers of facets of
$P_1(B_\infty^m)$ coincide with $P_1(f_1),\dots,P_1(f_d)$, and it
is easy to check that $P_1(f_i)=e_i$ for $i=1,\dots,d$. As for
$P_2$, we observe that
$e_1-e_2\in\lin\{f_1,f_2,f_{d+2},\dots,f_m\}$, and the coefficient
near $f_2$ in the expansion of $e_1-e_2$ is $\pm 1$. Therefore
$P_2(f_2)=\pm(e_1-e_2)$.
\medskip

Since the projections $P_1$ and $P_2$ satisfy the minimality
condition from \cite[Lemma 1]{laa} (see, also
\cite[pp.~318--319]{jfa}), the parallelepipeds $P_1(B_\infty^m)$
and $P_2(B_\infty^m)$ are MVSE for $X$. Hence, by the conditions
of Lemma \ref{T:polyhedral}, they are MVSE for $Y$ also. Hence,
they are minimum-volume parallelepipeds containing $B_Y$. On the
other hand, it is known, see  \cite[Lemma 3$\cdot$1]{PS}, that
centers of facets of minimal-volume parallelepipeds containing
$B_Y$ should belong to $B_Y$, we get $e_1,e_2,e_1-e_2\in B_Y$. The
theorem follows.
\end{proof}

I would like to thank Gideon Schechtman for turning my attention
to the fact that the class $\mt_d$ was studied in works on lattice
tiles.

\end{large}

\end{document}